\title
{The number of descendants in a random directed acyclic graph}
\date{27 February, 2023}
\author{Svante Janson}
\thanks{Supported by the Knut and Alice Wallenberg Foundation}
\address{Department of Mathematics, Uppsala University, PO Box 480,
SE-751~06 Uppsala, Sweden}
\email{svante.janson@math.uu.se}
\newcommand\urladdrx[1]{{\urladdr{\def~{{\tiny$\sim$}}#1}}}
\numberwithin{equation}{section}
\renewcommand\le{\leqslant}
\renewcommand\ge{\geqslant}
\theoremstyle{plain}
\newtheorem{theorem}{Theorem}[section]
\newtheorem{lemma}[theorem]{Lemma}
\theoremstyle{definition}
\newcommand\xqed[1]{%
    \leavevmode\unskip\penalty9999 \hbox{}\nobreak\hfill
    \quad\hbox{#1}}
\newtheorem{exampleqqq}[theorem]{Example}
\newenvironment{example}{\begin{exampleqqq}}
  {\xqed{$\triangle$}\end{exampleqqq}}
\newtheorem{remarkqqq}[theorem]{Remark}
\newenvironment{remark}{\begin{remarkqqq}}
  {\xqed{$\triangle$}\end{remarkqqq}}
\newtheorem{problem}[theorem]{Problem}
\theoremstyle{remark}
\newenvironment{romenumerate}[1][-10pt]{
\addtolength{\leftmargini}{#1}\begin{enumerate}
 }{\end{enumerate}}
\newcounter{oldenumi}
{\setcounter{oldenumi}{\value{enumi}}
\begin{romenumerate} \setcounter{enumi}{\value{oldenumi}}}
{\end{romenumerate}}
\newcounter{thmenumerate}
\newcounter{xenumerate}   
\newcounter{steps}
\newcommand\stepx[1]{\smallskip\noindent\refstepcounter{steps}%
 \emph{Step \arabic{steps}: #1}\noindent}
\newcommand{\refT}[1]{Theorem~\ref{#1}}
\newcommand{\refTs}[1]{Theorems~\ref{#1}}
\newcommand{\refL}[1]{Lemma~\ref{#1}}
\newcommand{\refLs}[1]{Lemmas~\ref{#1}}
\newcommand{\refR}[1]{Remark~\ref{#1}}
\newcommand{\refS}[1]{Section~\ref{#1}}
\newcommand{\refSs}[1]{Sections~\ref{#1}}
\newcommand{\refSS}[1]{Section~\ref{#1}}
\xdef\klockan{\the\count1.0\the\count255}
\xdef\klockan{\the\count1.\the\count255}\fi
\newcommand\set[1]{\ensuremath{\{#1\}}}
\newcommand\xpar[1]{(#1)}
\newcommand\bigpar[1]{\bigl(#1\bigr)}
\newcommand\Bigpar[1]{\Bigl(#1\Bigr)}
\newcommand\biggpar[1]{\biggl(#1\biggr)}
\newcommand\lrpar[1]{\left(#1\right)}
\newcommand\bigsqpar[1]{\bigl[#1\bigr]}
\newcommand\sqpar[1]{[#1]}
\newcommand\lrsqpar[1]{\left[#1\right]}
\newcommand\cpar[1]{\{#1\}}
\newcommand\abs[1]{\lvert#1\rvert}
\newcommand\bigabs[1]{\bigl\lvert#1\bigr\rvert}
\newcommand\Bigabs[1]{\Bigl\lvert#1\Bigr\rvert}
\newcommand\biggabs[1]{\biggl\lvert#1\biggr\rvert}
\newcommand\lrabs[1]{\left\lvert#1\right\rvert}
\def\rompar(#1){\textup(#1\textup)}    
\newcommand\Bigparfrac[2]{\Bigpar{\frac{#1}{#2}}}
\def\xexp(#1){e^{#1}}
\newcommand\ceil[1]{\lceil#1\rceil}
\newcommand\floor[1]{\lfloor#1\rfloor}
\newcommand\ntoo{\ensuremath{{n\to\infty}}}
\newcommand\ttoo{\ensuremath{{t\to\infty}}}
\newcommand\downto{\searrow}
\newcommand\upto{\nearrow}
\newcommand\punkt{\xperiod}    
\newcommand\ie{i.e\punkt}
\newcommand\eg{e.g\punkt}
\newcommand\cf{cf\punkt}
\newcommand{\as}{a.s\punkt}
\newcommand\whp{w.h.p\punkt}
\newcommand{\tend}{\longrightarrow}
\newcommand\dto{\overset{\mathrm{d}}{\tend}}
\newcommand\pto{\overset{\mathrm{p}}{\tend}}
\newcommand\asto{\overset{\mathrm{a.s.}}{\tend}}
\newcommand\lito{\overset{L^1}{\tend}}
\newcommand\plito{\pto}
\newcommand\eqd{\overset{\mathrm{d}}{=}}
\newcommand\op{o_{\mathrm p}}
\newcommand\bbR{\mathbb R}
\newcounter{CC}
\newcounter{cc}
\newcommand\E{\operatorname{\mathbb E}{}} 
\renewcommand\P{\operatorname{\mathbb P{}}}
\newcommand\Var{\operatorname{Var}}
\newcommand\Cov{\operatorname{Cov}}
\newcommand\Exp{\operatorname{Exp}}
\newcommand\Bin{\operatorname{Bin}}
\newcommand\Ge{\operatorname{Ge}}
\newcommand\NegBin{\operatorname{NegBin}}
\newcommand\rise[1]{^{\overline{#1}}}
\newcommand\gd{\delta}
\newcommand\gD{\Delta}
\newcommand\gam{\gamma}
\newcommand\gG{\Gamma}
\newcommand\gs{\sigma}
\newcommand\gu{\upsilon}
\newcommand\gU{\Upsilon}
\renewcommand\phi{\xxx}  
\newcommand\cA{\mathcal A}
\newcommand\cB{\mathcal B}
\newcommand\cC{\mathcal C}
\newcommand\cF{\mathcal F}
\newcommand\cI{\mathcal I}
\newcommand\cY{{\mathcal Y}}
\newcommand\indic[1]{\boldsymbol1\cpar{#1}}
\newcommand\qw{^{-1}}
\newcommand\qww{^{-2}}
\newcommand\qq{^{1/2}}
\newcommand\qqw{^{-1/2}}
\newcommand\intoo{\int_0^\infty}
\newcommand\oi{\ensuremath{[0,1]}}
\newcommand\ooi{(0,1]}
\newcommand\ooo{[0,\infty)}
\newcommand\xooo{(0,\infty)}
\newcommand\dd{\,\mathrm{d}}
\newcommand\ddx{\mathrm{d}}
\newcommand{\pgf}{probability generating function}
\newcommand{\gsf}{$\gs$-field}
\newcommand\lhs{left-hand side}
\newcommand\rhs{right-hand side}
\newcommand\Uoi{\mathsf U(0,1)}
\newcommand\nn{^{(n)}}
\newcommand\nni{^{(n+1)}}
\newcommand\et{e^{-t}}
\newcommand\cYY{{\mathcal Y}'}
\newcommand\hcY{\widehat{\mathcal Y}}
\newcommand\hxi{\hat\xi}
\newcommand\nux[1]{{\nu_{#1}}}
\newcommand\DD{\widehat D}
\newcommand\hhD{{\widehat D}'}
\newcommand\llnn{(\log n)^2/n}
\newcommand\hA{\widehat A}
\newcommand\hB{\widehat B}
\newcommand\ddt{\frac{\ddx}{\dd t}}
\newcommand\Mx{M^*}
\newcommand\xdd{^{(d-1)/d}}
\newcommand\xddw{^{-(d-1)/d}}
\newcommand\nqq{\sqrt{n}}
\newcommand\nm{_{n,m}}
\begin{document}

\begin{abstract} 
We consider a well known model of
random directed acyclic graphs of order $n$, obtained by recursively
adding vertices, where each new vertex has a fixed outdegree $d\ge2$ and the
endpoints of the $d$ edges from it are chosen uniformly at random among
previously existing vertices.

Our main results concern the number $X\nn$
of vertices that are descendants of $n$.
We show that $X\nn/n\xdd$ converges in distribution;
the limit distribution is, up to a constant factor, given by the $d$th root
of a Gamma distributed variable. $\gG(d/(d-1))$. 
When $d=2$, the limit distribution can also
be described as a chi distribution $\chi(4)$.
We also show convergence of moments, and find thus the asymptotics of the
mean and higher moments.
\end{abstract}

\maketitle

\section{Introduction}\label{S:intro}

A \emph{dag} is a directed acyclic (multi)graph, and a \emph{$d$-dag} is a dag
where one or several vertices are \emph{roots} with outdegree 0, and all
other vertices have outdegrees $d$.
(Here, $d$ is a positive integer; we assume below $d\ge2$.)

We consider, as many before us,
the random $d$-dag $D_n$ on $n$ vertices 
constructed recursively by starting with a single
root 1, and then adding vertices $2,3,\dots,n$ one by one, giving each new
vertex, $k$ say, $d$ outgoing edges with endpoints uniformly and
independently chosen at random among the already existing vertices
$\set{1,\dots,k-1}$. (We thus allow multiple edges, so $D_n$ is a
directed multigraph.)
Two minor variations that will be discussed in \refS{Svar} are that we may
start with any number $m\ge1$ of roots, and that we may select the $d$
parents of a new node without replacement, thus not allowing multiple edges.
(In the latter case, we have to start with $\ge d$ roots.)

Note that for $d=1$, the model becomes the well known
\emph{random recursive tree}; the
properties in this case are quite different from 
the  case $d\ge2$, and we
assume throughout the paper $d\ge2$. In fact, to concentrate on the essential
features, in the bulk of the paper we consider the most important case $d=2$;
the minor differences in the case $d>2$ are briefly treated in \refS{Sd}.

The random $d$-dag has been studied as a model for a
random circuit where each gate has $d$ inputs chosen at random
\cite{Diaz+,TX,Arya+,TM,BroutinF,Moler+}.
(In this case it seems more natural to reverse all edges, and regard a
$d$-dag as a graph with indegrees 0 or $d$. 
 In the present paper, we direct the edges towards the
root(s) as above.)
The model has also been studied in
connection with constraint satisfaction
\cite[Exercise 7.2.2.3--371]{Knuth7A}. 
Among  results shown earlier for random $d$-dags, we mention
results on vertex degrees and leaves
\cite{DevroyeLu, 
TM, 
MT,
Moler+,
KubaS}, and 
on  lengths of paths and depth
\cite{Diaz+,
TX,
Arya+,
SJ235,
BroutinF}. 

In the present paper, we study the following problem, as far as we know
first considered by Knuth
\cite[Exercises 7.2.2.3--371 and 372]{Knuth7A}: 
How many descendants does vertex $n$ have?
In other words,
how many vertices can be reached by a directed path from vertex $n$?
In the random circuit interpretation, this is the number of gates (and
inputs) that are used in the calculation of an output.

We state our main results in the next subsection, and prove them in
\refSs{Sbasic}--\ref{Sd}.
Along the way, we prove some results on the structure of 
the subgraph of descendants 
which may be of independent interest.
Some further results are given in \refS{Sfurther}.
As said above, we discuss two variations of the model in \refS{Svar}.

\begin{remark}
 We emphasise that we 
in this paper exclusively consider random dags constructed by
\emph{uniform} attachment.
Another popular model that has been studied by many authors 
(often as an undirected graph)
is preferential
attachment, see \eg{}
\cite{BA} and \cite{BRST}.
A different model of non-uniform attachment is studied in \cite{BroutinF}.
\end{remark}

\begin{problem}
Find results  
for preferential attachment random dags
corresponding to the results above!
\\
Do the same for the model in \cite{BroutinF}!
\end{problem}

\subsection{Main result}\label{SSmain}
We introduce some notation; for further (mainly standard) notation, see
\refSS{SSnot}. We let $d\ge2$ be fixed and consider asymptotics as \ntoo.

Let $D_n$ be the random $d$-dag defined above, 
let $\DD_n$ be the subdigraph of $D_n$ consisting of all vertices and edges
that can be reached by a directed path from vertex $n$ (including vertex $n$
itself),
and let $X\nn:=|\DD_n|$, the number of descendants of $n$.
We thus want to find the asymptotic behaviour of the 
random variable $X\nn$ and its
expectation $\E X\nn$ as \ntoo.
Note that $\DD_n$ also is a $d$-dag, and has 1 root; 
thus the number of edges in $\DD_n$ is $d(X\nn-1)$, and hence 
our results also yield the asymptotics
of the number of edges.

Our main result in the case $d=2$ is the
following theorem, proved in two parts in \refSs{Spf1} and \ref{Smom}.

Let $\chi_4$ denote a random variable with the $\chi(4)$ distribution.
Recall that this means that $\chi_4$ has the distribution of $|\eta|$ where
$\eta$ is a standard normal random vector in $\bbR^4$, and that thus
(or by \eqref{Gamma} and a change of variables)
$\chi_4$ has density function
\begin{align}\label{chi1}
  f_{\chi_4}(x)=
  \frac12 x^3 e^{-x^2/2},
\qquad x>0.
\end{align}

\begin{theorem}\label{TX}
Let $d=2$.
Then,  as \ntoo,
  \begin{align}\label{tx1}
    X\nn/\sqrt n \dto
\frac{\pi}{2\sqrt2}\chi_4
  \end{align}
with convergence of all moments.
Hence, for every fixed  $r>0$,
\begin{align}\label{tx2}
  \E(X\nn)^r \sim \Bigparfrac{\pi}{2}^r\gG\Bigpar{\frac{r}{2}+2} n^{r/2}
\end{align}
and, in particular,
\begin{align}\label{tx3}
  \E X\nn \sim \frac{3\pi^{3/2}}{8} \sqrt n.
\end{align}
\end{theorem}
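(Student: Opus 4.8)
The plan is to turn $X\nn$ into a functional of a one‑dimensional Markov chain by a \emph{backward exploration} from vertex $n$, and then to identify the scaling limit of that chain with a Yule process.

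\textbf{Backward exploration.} I would reveal the vertices in the order $n,n-1,\dots,1$, keeping track of the set of descendants of $n$ found so far. For $1\le m\le n-1$ let $B_m$ be the number of out‑half‑edges emanating from the already discovered descendants in $\set{m+1,\dots,n}$ whose endpoint lies in $\set{1,\dots,m}$. The key (and, as announced in the introduction, independently interesting) observation is that, conditionally on the exploration up to stage $m$, these $B_m$ half‑edges have endpoints that are independent and uniform in $\set{1,\dots,m}$. Hence $(B_m)_{m=n-1}^{1}$ is a Markov chain with $B_{n-1}=2$; vertex $m$ is a descendant of $n$ exactly when $J_m\ge1$, where $J_m\mid B_m\sim\Bin(B_m,1/m)$; and
\[
B_{m-1}=B_m-J_m+2\cdot\indic{J_m\ge1},\qquad
X\nn=1+\sum_{m=1}^{n-1}\indic{J_m\ge1}.
\]

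\textbf{Yule limit.} On the time scale $t:=\log(n/m)$, a half‑edge with endpoint uniform in $\set{1,\dots,m}$ is ``resolved'' when the exploration reaches that endpoint, which takes an independent $\Exp(1)$ time; so while $B_m\ll m$ the half‑edges behave like particles that die after an $\Exp(1)$ lifetime and are replaced by $2$ new ones. Thus, in the dilute range $m\ge\eps n$, the exploration converges as \ntoo{} to a \emph{Yule process} $Z$ started from $2$ particles, and (as a process in $t$) $\tfrac mn B_m\to \et Z(t)$. Since $\et Z(t)$ is a nonnegative martingale converging a.s.\ and in every $L^p$ to a limit $W$, and a Yule process from $2$ is the independent superposition of two copies from $1$, we get $W\eqd\Exp(1)+\Exp(1)$, i.e.\ $W\sim\gG(2)$, equivalently $2W\eqd\chi_4^2$.

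\textbf{From $W$ to $X\nn$.} Once $B_m$ has grown large the chain concentrates, and a differential‑equation (Wormald/Kurtz) argument shows that for $m\ll n$ one has $B_m\approx\tfrac nm\,\et Z(t)\approx \tfrac nm W$ until $B_m$ becomes comparable to $m$, which happens at $m\asymp\sqrt{Wn}$. Writing $m=\sigma\sqrt{Wn}$ and $B_m=\sqrt{Wn}\,\gf(\sigma)$, the drift relation $\E[B_{m-1}-B_m\mid B_m]=-\tfrac{B_m}{m}+2\bigpar{1-(1-\tfrac1m)^{B_m}}$ becomes the ODE
\[
\gf'(\sigma)=\frac{\gf(\sigma)}{\sigma}-2\Bigpar{1-e^{-\gf(\sigma)/\sigma}},
\qquad \gf(\sigma)\sim 1/\sigma\ \text{ as }\ \sigma\to\infty
\]
(the boundary behaviour matching the dilute regime), whose solution is $\gf(\sigma)=\sigma\log\bigpar{1+\sigma^{-2}}$, so that $1-e^{-\gf(\sigma)/\sigma}=1/(1+\sigma^2)$. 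Therefore $X\nn=1+\sum_m\indic{J_m\ge1}=\sum_m\bigpar{1-(1-\tfrac1m)^{B_m}}+o(\sqrt n)$, and the Riemann sum over $m=\sigma\sqrt{Wn}$ gives
\[
\frac{X\nn}{\sqrt n}\ \dto\ \sqrt W\int_0^\infty\frac{\dd\sigma}{1+\sigma^2}
=\frac{\pi}{2}\sqrt W=\frac{\pi}{2\sqrt2}\,\chi_4,
\]
which is \eqref{tx1}. For the moments I would bound $\E(X\nn/\sqrt n)^r$ uniformly in $n$ using the $L^p$‑boundedness of the Yule martingale together with the $\gG(2)$ tail of $W$; uniform integrability then upgrades \eqref{tx1} to convergence of all moments, and $\E\bigpar{\tfrac\pi2\sqrt W}^{r}=(\pi/2)^r\intoo w^{r/2}w\,e^{-w}\dd w=(\pi/2)^r\gG(r/2+2)$ gives \eqref{tx2}, with $r=1$ giving \eqref{tx3}.

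\textbf{Main obstacle.} The delicate part is the matching in the third step: showing that the genuinely random early segment of $(B_m)$ — where $B_m$ is small, $m\asymp n$, and $\et Z(t)$ is far from settled — freezes into the single variable $W$, and that the subsequent evolution, \emph{in particular the crossover window $m\asymp\sqrt{Wn}$ where $B_m/m$ is of order $1$ and neither the dilute nor the saturated approximation applies}, is governed by the above ODE uniformly enough to pin down the constant $\pi/2$. One must also check that simultaneous multiple hits ($J_m\ge2$), whose total number is not individually negligible, affect neither the count $X\nn$ (such $m$ are descendants anyway) nor the limiting drift (they are already built into $\E J_m$ and $\P(J_m\ge1)$).
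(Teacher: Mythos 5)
Your proposal follows essentially the same route as the paper: the backward exploration producing the Markov chain $Y_k$ (your $B_m$) with conditionally binomial hits, the Yule-process limit of the dilute phase yielding $W\in\gG(2)$, the fluid/ODE analysis of the crossover window $m\asymp\sqrt{Wn}$ whose solution $\sigma\log(1+\sigma^{-2})$ is exactly the paper's limit $t^2\log(1+\xi/t^2)$ in rescaled form, the Riemann sum $\int_0^\infty(1+\sigma^2)^{-1}\dd\sigma=\pi/2$, and uniform integrability to upgrade to moment convergence. The paper carries out the delicate steps you correctly flag by introducing the reverse supermartingale $W_k=(k+1)Y_k$ with its Doob decomposition, proving the ODE limit via tightness and Skorohod coupling, and obtaining the moment bounds from Burkholder's inequality, but the overall strategy is the same.
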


More generally, for any fixed $d\ge2$, we prove in \refS{Sd} the following:

\begin{theorem}\label{TD}
Let $d\ge2$. Then, as \ntoo,
  \begin{align}\label{td1}
    X\nn/n\xdd \dto 
\frac{\pi(d-1)^{1/d}}{d\sin(\pi/d)}\gam^{1/d},  
\end{align}
with convergence of all moments,
where $\gam\in\gG\bigpar{\frac{d}{d-1}}$.
Hence, for every fixed $r>0$,
\begin{align}\label{tdmom}
  \E\bigpar{X\nn}^r \sim
\lrpar{\frac{(d-1)^{1/d}\pi}{d\sin(\pi/d)}}^r 
\frac{\gG\bigpar{\frac{d}{d-1}+\frac{r}{d}}}{\gG\bigpar{\frac{d}{d-1}}}
 n^{r(d-1)/d}
\end{align}
\end{theorem}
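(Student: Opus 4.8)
The plan is to prove \eqref{td1} and \eqref{tdmom} by adapting the proof of \refT{TX} given in \refSs{Spf1} and~\ref{Smom}, carrying the parameter $d$ through; most of that argument is insensitive to the value of $d$, with the exponent $1/d$ replacing $1/2$ and the explicit constants recomputed at the end. The common starting point is an exploration of $\DD_n$: scan the vertices in decreasing order $n,n-1,\dots,1$ and decide at step $j$ whether $j\in\DD_n$, which happens precisely when some already-discovered descendant $v>j$ has $j$ among its $d$ parents. Since the out-edges of distinct vertices are independent, the relevant information is carried by the Markov chain with state $(N_j,D_j)$, where $N_j:=\bigabs{\DD_n\cap\set{j,\dots,n}}$ counts the descendants found so far and $D_j$ counts their still ``dangling'' out-edges, \ie{} those pointing into $\set{1,\dots,j-1}$. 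One starts from $(N_n,D_n)=(1,d)$, and the step $j\to j-1$ reads: let $K\sim\Bin\bigpar{D_j,1/(j-1)}$ be the number of dangling edges landing on $j-1$; if $K\ge1$ then $j-1$ becomes a descendant, so $N_{j-1}=N_j+1$ and $D_{j-1}=D_j-K+d$, while if $K=0$ then $(N_{j-1},D_{j-1})=(N_j,D_j)$. Up to corrections from repeated edges, which are negligible because the vertices that matter have large index, this describes $X\nn=N_1$ exactly.

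One then analyses this chain in the two ranges separated by the scale $n\xdd$. When $j$ is much larger than this scale we have $D_j\ll j$, so $K\in\set{0,1}$ with overwhelming probability and $D_j\approx1+(d-1)N_j$; after the time change $j=ne^{-s}$ the chain converges in this range to a linear birth-with-immigration process, and correspondingly $D_j\,j^{d-1}/n^{d-1}$ converges \as{} to a constant multiple of $\gam$ with $\gam\in\gG\bigpar{\frac d{d-1}}$, the shape $\frac d{d-1}=1+\frac1{d-1}$ being forced by the affine birth rate and the initial value $D_n=d$. This limiting ``seed'' carries all the randomness. Once it is fixed, the chain enters the window $j\asymp n\xdd$, where $j$ becomes a descendant with a probability that is an explicit function of $D_j/j$ and a law of large numbers takes over, so that $X\nn/n\xdd$ converges in probability to a deterministic function of $\gam$ obtained by summing those probabilities along the window. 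Because $D_j$ scales like $j^{-(d-1)}$ on entry, this sum is self-similar and reduces to a one-dimensional integral evaluable in closed form, \eg{} $\intoo(1+x^d)\qw\dd x=\pi/\bigpar{d\sin(\pi/d)}$ (equivalently, via the reflection formula $\gG(1/d)\gG(1-1/d)=\pi/\sin(\pi/d)$); combining this with the seed constant produces exactly the factor $\frac{\pi(d-1)^{1/d}}{d\sin(\pi/d)}$ in \eqref{td1}.

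For \eqref{tdmom} and the uniform integrability needed to upgrade \eqref{td1} to convergence of all moments, I would bound $\E\bigpar{X\nn}^r$ directly, either by dominating the exploration chain above by a pure-birth process, or via the recursion $X\nn=1+\bigabs{\bigcup_{i=1}^d\DD_{a_i}}$ obtained by conditioning on the $d$ parents $a_1,\dots,a_d$ of vertex $n$ (being careful that this union is genuinely smaller than $\sum_i|\DD_{a_i}|$, since the exploration trees from the $a_i$ merge low down). This gives $\E\bigpar{X\nn}^r=O\bigpar{n^{r(d-1)/d}}$ for every fixed $r>0$; the leading constant then follows by matching with the $r$th moment $\Bigpar{\frac{(d-1)^{1/d}\pi}{d\sin(\pi/d)}}^{r}\gG\bigpar{\frac d{d-1}+\frac rd}\big/\gG\bigpar{\frac d{d-1}}$ of the \rhs{} of \eqref{td1}, and since $\gam^{1/d}$ is determined by its moments this step also reproves \eqref{td1}. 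I expect the main obstacle to be the window $j\asymp n\xdd$: there neither the martingale/birth-with-immigration approximation nor the crude Poisson approximation for $K$ is sharp enough, and one must follow the joint evolution of $N_j$ and $D_j$ carefully --- precisely the place where the $d=2$ argument has to be redone with $1/d$ in place of $1/2$, the remaining parts carrying over essentially verbatim.
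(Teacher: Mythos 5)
Your plan follows essentially the same route as the paper: the backward exploration chain $Y_{k-1}=Y_k-Z_k+dJ_k$ with $Z_k\in\Bin(Y_k,1/k)$, a Gamma$\bigpar{\frac{d}{d-1}}$ seed produced by the early branching (Yule-type) phase, a law-of-large-numbers analysis in the window $k\asymp n\xdd$ reducing to $\intoo(1+x^d)\qw\dd x=\pi/(d\sin(\pi/d))$, and moment bounds to get uniform integrability --- all of which is exactly how \refS{Sd} adapts the $d=2$ argument. The only real divergence is in the moment estimates, where you suggest domination by a pure-birth process or a recursive decomposition over the $d$ parents, while the paper runs Burkholder's inequality on the reverse martingales $M_k$ and $L_k$ (as in \refS{Smom}); also note that the convergence of the rescaled seed is only in distribution, not \as{} (\cf{} \refR{Rnoas}).
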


We note that the convergence in \eqref{tx1} and \eqref{td1} does \emph{not}
hold a.s.; see \refR{Rnoas}. 

We will see in \refS{Svar} that the same results hold for the variations
with $m\ge1$ roots (as long as $m$ is fixed or does not grow too fast)
and without multiple edges (i.e., drawing without replacement).

\begin{example}
  Knuth \cite[Answer 7.2.2.3--371(b)]{Knuth7A} 
considers the version with $d=2$, $m\ge2$ roots, and drawing without
replacement (\ie, no multiple edges); for this version he provides recursion
formulas that yield the exact value of $\E X\nn$ (there denoted $C_{m,n}$).
For example, for $m=2$ and $n=100$, his formulas yield 
$\E X\nn\doteq 20.79$ 
while the asymptotic value \eqref{tx3} is 
$\doteq 20.88$, 
with an error of less than $0.5\%$.
\end{example}

\subsection{Notation}\label{SSnot}


The random $d$-dag $D_n$, its subdigraph $\DD_n$, and the number
$X\nn$ of descendants of $n$ are defined above.
The outdegree $d$ is fixed and not shown in the notation.
As said above we usually assume $d=2$; in particular this is the case in 
the proof in \refSs{Sbasic}--\ref{Smom}, while we consider general 
$d\ge2$ in \refS{Sd}.

We say that the vertices and edges of $\DD_n$ are \emph{red}.
Thus $X\nn:=|\DD_n|$ is 
the number of red vertices in $D_n$.
(For any digraph $D$, we let $|D|$ denote its number of vertices.)

Essentially all random variables below depend on $n$.
We may denote the dependency on $n$ by a supersript ${}\nn$
for clarity (in particular in limit statements), but we often omit this.
We sometimes in the proofs tacitly assume that $n$ is large enough.
Unspecified limits are as \ntoo.

We will in the proofs consider three different phases of the dag $D_n$,
see \refSs{SI}--\ref{SIII}. We will then  use fixed integers
$n_1=n_1\nn$ and $n_2=n_2\nn$; these can be chosen rather arbitrarily with
$n_1/n\to0$ slowly and $n_2/\sqrt n\to\infty$ slowly, see the beginnings of
\refSs{SI} and \ref{SII}.

We use $\pto$, $\dto$, $\lito$,
for convergence in probability, distribution and $L^1$, respectively, and
$\eqd$ for equality in distribution.


As usual, \as{} (almost surely) means with probability 1, while
\whp{} (with high probability) means with probability tending to 1 as \ntoo.

We recall some classical probability distributions.
The Gamma distribution $\gG(s,a)$, where $s>0$ and $a>0$, 
has density 
\begin{align}\label{Gamma}
\gG(s)\qw a^{-s} x^{s-1}e^{-x/a},
\qquad x>0.
\end{align}
We write $\gG(s)=\gG(s,1)$. 
(There should be no risk of confusion with the Gamma function.) 
In particular, $\gG(1)=\Exp(1)$, the standard exponential distribution. 
If $\gamma\in\gG(s,a)$, then \eqref{Gamma} implies
\begin{align}\label{Gmom}
  \E \gamma^r = a^r\frac{\gG(s+r)}{\gG(s)},
\qquad r\ge0.
\end{align}
The chi-square distribution $\chi^2(r)=\gG(r/2,2)$, and
the chi-distribution $\chi(r)$ is the distribution of $\sqrt{\xi}$ where
$\xi\in\chi^2(r)$.
(This shows that when $d=2$, the limits in \eqref{tx1} and \eqref{td1} agree.)

We use 'increasing' and 'positive' in the weak sense.

\begin{remark}
For simplicity, and to avoid unnecessary distraction, we 
often state results with convergence in probability, 
also when the proof yields the stronger
convergence in $L^1$.
(For example, this applies to all three results in \refS{SII}.)
Actually, in many (all?) cases, 
convergence in probability can be improved to
convergence in $L^p$ for any $p<\infty$, as a consequence of 
the estimates in \refS{Smom}.
\end{remark}

\begin{remark}\label{Rcoupling}
The construction of the random dag $D_n$ naturally constructs $D_n$ for all
$n\ge1$ together. In other words, it yields a coupling of $D_n$ for all
$n\ge1$.
However, in the proofs below we will \emph{not} use this coupling; instead we
regard $D_n$ as constructed separately for each $n$, which allows us to use
a different coupling in the proof.
\end{remark}

\section{Basic analysis}\label{Sbasic}
For simplicity, we assume $d=2$
from now on until the proof of \refT{TX} is completed at the end of
\refS{Smom}. The modifications for general $d$ are discussed in \refS{Sd}.

\subsection{A stochastic recursion}\label{SSrecursion}
We consider in the sequel only the red subgraph $\DD_n$ of $D_n$, which we
recall 
consists of the descendants of $n$ and
and all edges between them.

In the definition in \refS{S:intro} of the dag $D_n$, we start with vertex
1 and add vertices in increasing order. In our analysis, we will instead
start at vertex $n$ and go backwards to 1.
The red dag $\DD_n$ then
may be generated by the following procedure.

\begin{enumerate}
\item Start by declaring vertex $n$ to be \emph{red}, and all others
  \emph{black}. Let $k:=n$.
\item 
If vertex $k$ is red, then
create two new edges from that vertex, with
endpoints that are randomly drawn from $1,\dots,k-1$, and declare these
endpoints red.\\
If $k$ is black, delete $k$ (and do nothing else).
\item If $k=2$ then STOP; otherwise let $k:=k-1$ and REPEAT from (2).
\end{enumerate}

Let $Y_k$ be the number of  edges in $\DD_n$
that start in \set{k+1,\dots,n} and end in
\set{1,\dots,k}. In other words, $Y_k$ is the number of edges that cross the
gap between $k+1$ and $k$. 
Furthermore, let $Z_k$ be the number of these edges that end in $k$.
We here consider integers $k$ with $0\le k\le
n-1$, and have the boundary conditions $Y_{n-1}=2$ and $Y_0=0$; also
$Z_1=Y_1$ and $Z_0=0$.

Let also, for $1\le k\le n-1$,
\begin{align}\label{jk}
  J_k:=\indic{Z_k\ge1},
\end{align}
the indicator that at least one edge ends at $k$, which equals the
indicator that $k$ is red, and thus can be reached from $n$.

We will study the random dag $\DD_n$
by travelling from vertex $n$
backwards to the root; we thus consider the sequence $Y_{n-1},\dots,Y_1,Y_0$
in reverse order. In the procedure above, there are $Z_k$ edges that end at
$k$, and $2J_k$ edges that start there; hence,
for $2\le k\le n-1$,
\begin{align}\label{tw1}
  Y_{k-1}=Y_k-Z_k+2J_k=Y_k-Z_k+2\cdot\indic{Z_k\ge1}.
\end{align}

In our analysis,
we modify the procedure above by not revealing the endpoint of the edges
until needed. This means that when coming to a vertex
$k\in\set{1,\dots,n-1}$,
we have a list of $Y_k$ edges where we know only the start but not the end
(except that the end should be in \set{1,\dots,k}).
We then randomly select a subset by throwing a coin with
success probability $1/k$ for each of the $Y_k$ edges; these edges end at
$k$ and are removed from the list, and thus $Z_k$ is the number of them. 
This determines also $J_k$ by
\eqref{jk}, and if $J_k=1$, we add two new edges starting at $k$ to our list.
It is evident that this gives the same distribution of random edges as the
original algorithm above. 
(It is here important that the two edges from a
given vertex are chosen with replacement, so that we can treat the $Y_k$
edges passing over the gap between $k+1$ and $k$ as independent.
Note that the endpoints of these edges are uniformly distributed on
\set{1,\dots,k}.)

It follows from the modified procedure that $Y_{n-1},\dots,Y_1$
is a Markov chain. More precisely, 
let $\cF_k$ be the \gsf{} generated by our coin tosses at vertices
$n-1,\dots,k+1$, and note that  these coin tosses determine $Y_k$ (and also
$Y_{n-1},\dots,Y_{k+1}$). Then,
for $1\le k\le n-1$,
conditioned on $\cF_k$,
$Z_k$  has a binomial distribution
\begin{align}
  \label{zk}
Z_k\in\Bin(Y_k,1/k)
.\end{align}
Thus \eqref{tw1} and \eqref{zk} give a stochastic recursion of Markov type
for $Y_k$.

Note that $\cF_k \subset\cF_{k-1}$, so $\cF_1,\dots,\cF_{n-1}$ form a
\emph{decreasing} 
sequence of \gsf{s}, \ie, a revcerse filtration. 
We therefore may change sign of the indices and
consider, for example, $Y_{-j}$ and $\cF_{-j}$ for $j\in\set{-(n-1),\dots,-1}$
so that we have a filtration of the standard type.

The recursion \eqref{tw1}--\eqref{zk} yields, for $2\le k\le n-1$,
\begin{align}\label{tw2}
  \E \bigpar{Y_{k-1}\mid\cF_k}&
= Y_k -\E \bigpar{Z_k\mid \cF_k}+2\P\bigpar{Z_k\ge1\mid\cF_k}
\notag\\&
=Y_k-\tfrac{1}{k}Y_k+2\bigpar{1-(1-\tfrac1k)^{Y_k}}.
\end{align}
We obtain also, by Markov's inequality,
\begin{align}\label{tw2+}
  \E \bigpar{Y_{k-1}\mid\cF_k}&
\le Y_k -\E \bigpar{Z_k\mid \cF_k}+2\E \bigpar{Z_k\mid \cF_k}
=Y_k+\tfrac{1}{k}Y_k
=\tfrac{k+1}{k}Y_k
.\end{align}

\subsection{A reverse supermartingale and some estimates}
\label{SSmart}
We define, for $0\le k\le n-1$,
\begin{align}\label{tw3}
  W_k:=(k+1)Y_k,
\end{align}
and find from \eqref{tw2+}
\begin{align}\label{tw4+}
  \E \bigpar{W_{k-1}\mid\cF_k}&  
=k  \E \bigpar{Y_{k-1}\mid\cF_k}
\le (k+1)Y_k = W_k.
\end{align}
This shows that $W_{-j}$, $-(n-1) \le j\le 0$, is a supermartingale for the
filtration $(\cF_{-j})$; in other words, 
$W_0,\dots,W_{n-1}$ is a reverse supermartingale.
We have the initial value 
\begin{align}\label{tw40}
  W_{n-1}=nY_{n-1}=2n.
\end{align}
We thus have the Doob decomposition
\begin{align}\label{tm1}
  W_k=M_k-A_k,
\qquad 0\le k\le n-1,
\end{align}
where
\begin{align}\label{tm2}
  M_k&:=2n + \sum_{i=k+1}^{n-1}\bigpar{W_{i-1}-\E\xpar{W_{i-1}\mid\cF_i}}
\end{align}       
 is a reverse martingale: $\E\bigpar{M_{k-1}\mid\cF_k}=M_k$, and
\begin{align}\label{tm3}
  A_k&:= \sum_{i=k+1}^{n-1}\bigpar{W_{i}-\E\xpar{W_{i-1}\mid\cF_i}}
\end{align}
is positive and reverse increasing: \eqref{tw4+} yields
\begin{align}\label{AAA}
0=A_{n-1}\le\dots\le A_1\le A_0.  
\end{align}
In particular, $W_k\le M_k$ and
\begin{align}\label{tm4}
  \E W_k \le \E M_k = M_{n-1}=2n,
\qquad 0\le k\le n-1.
\end{align}

We note also from \eqref{tw2} the exact formula
\begin{align}\label{tw4a}
    \E \bigpar{W_{k-1}\mid\cF_k}&
=k  \E \bigpar{Y_{k-1}\mid\cF_k}
=(k-1)Y_k+2k\bigpar{1-(1-\tfrac1k)^{Y_k}} 
\end{align}
and thus
\begin{align}\label{tw4b}
A_{k-1}-A_k&=
W_k- \E \bigpar{W_{k-1}\mid\cF_k}
=2Y_k-2k\bigpar{1-(1-\tfrac1k)^{Y_k}}
\notag\\& 
=2k\bigsqpar{(1-\tfrac1k)^{Y_k}-1+\tfrac{Y_k}{k}}
.\end{align}

Furthermore, \eqref{tw1} and \eqref{zk} also yield
(rather crudely, but we do not need the exact formula, nor optimal constants),
for $1\le k\le n-1$,
\begin{align}\label{tw6}
  \Var\bigpar{Y_{k-1}\mid \cF_k}&
=   \Var\bigpar{Z_{k}-2\cdot\indic{Z_k\ge1}\mid \cF_k}
\notag\\&
\le 2\Var\bigpar{Z_{k}\mid \cF_k}
+ 2 \Var\bigpar{2\cdot\indic{Z_k\ge1}\mid \cF_k}
\notag\\&
= 2 Y_k\tfrac{1}{k}\bigpar{1-\tfrac{1}k}
+8 \P\xpar{Z_k\ge1\mid \cF_k}\bigpar{1-\P\xpar{Z_k\ge1\mid \cF_k}}
\notag\\&
\le 2 Y_k\tfrac{1}{k} 
+8 \P\xpar{Z_k\ge1\mid \cF_k}
\le 2 Y_k\tfrac{1}{k} 
+8 \E\xpar{Z_k\mid \cF_k}
\notag\\&
\le \tfrac{10}kY_k
\end{align}
and thus
\begin{align}\label{tw7}
  \Var\bigpar{W_{k-1}\mid \cF_k}&
=k^2\Var\bigpar{Y_{k-1}\mid \cF_k}
\le 10 k Y_k
\le 10 W_k.
\end{align}
Hence, \eqref{tm2} yields, using
the (reverse) martingale property, \eqref{tw7}, and
\eqref{tm4}, for $0\le k\le n-1$,
\begin{align}\label{tw8}
\Var M_k &=   \E\bigpar{M_k-2n}^2
=\sum_{i=k+1}^{n-1}\E\Var\bigpar{W_{i-1}\mid\cF_i}
\le10\sum_{i=k+1}^{n-1}\E W_{i}
\notag\\&
\le 10(n-1-k)\cdot 2n
\le 20n^2.
\end{align}
Consequently, since $W_k\le M_k$ by \eqref{tm1},
\begin{align}\label{tw9}
  \E W_k^2 \le \E M_k^2 =\Var M_k + (\E M_k)^2
\le 20n^2+(2n)^2=24n^2.
\end{align}

We extend this to a maximal inequality.
\begin{lemma}
  \label{LW*}
We have
\begin{align}\label{lw*}
  \E \max_{n-1\ge k\ge 0}W_k^2 \le \E \max_{n-1\ge k\ge 0}M_k^2 \le 96 n^2.
  \end{align}
\end{lemma}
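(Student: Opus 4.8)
The plan is to bound the maximum of $M_k$ by Doob's maximal inequality for martingales, then transfer the bound to $W_k$ via $W_k\le M_k$. The first point to address is that the useful bound $W_k\le M_k$ only gives control of $W_k$ from above, so I cannot directly bound $\max_k W_k^2$ by $\max_k W_k^2$-type quantities; instead I note that $W_k\ge0$, so $0\le W_k\le M_k$ pointwise by \eqref{tm1} (since $A_k\ge0$ by \eqref{AAA}), and hence $\max_k W_k^2\le\max_k M_k^2$, which is the first inequality in \eqref{lw*}. So everything reduces to bounding $\E\max_k M_k^2$.

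For the second inequality, I would recall that $M_0,\dots,M_{n-1}$ is a reverse martingale, i.e.\ $M_{-j}$ for $-(n-1)\le j\le 0$ is a martingale with respect to the filtration $(\cF_{-j})$, with terminal value $M_{n-1}=2n$ (a constant). Doob's $L^2$ maximal inequality applied to this (finite) martingale gives
\begin{align*}
\E\max_{n-1\ge k\ge0}M_k^2\le 4\,\E M_0^2.
\end{align*}
Then I would invoke the variance bound \eqref{tw8} with $k=0$, namely $\Var M_0\le20n^2$, together with $\E M_0=M_{n-1}=2n$ from the martingale property, to get $\E M_0^2=\Var M_0+(\E M_0)^2\le20n^2+4n^2=24n^2$. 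Combining, $\E\max_k M_k^2\le4\cdot24n^2=96n^2$, which is exactly the claimed bound.

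There is essentially no serious obstacle here; the only thing to be slightly careful about is the bookkeeping with reverse martingales — making sure Doob's maximal inequality is applied in the correct time direction. Since the sequence $(M_{-j})_{j}$ is a genuine martingale in the ordinary (forward) sense for the standard-type filtration $(\cF_{-j})$ already introduced after \eqref{tw2}, Doob's inequality applies verbatim, and the maximum over $j$ is the same as the maximum over $k$. One could alternatively avoid even mentioning reverse martingales by applying Doob to the martingale $M_{n-1},M_{n-2},\dots,M_0$ indexed forward by $n-1-k$; either phrasing works. The constant $4$ is the one from the $L^2$ Doob inequality $\E\sup|M|^2\le4\sup\E M^2=4\E M_0^2$, and no optimisation of constants is needed since the statement already allows the crude value $96$.
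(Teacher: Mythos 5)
Your proof is correct and follows essentially the same route as the paper: Doob's $L^2$ maximal inequality for the reverse martingale $M_k$ giving $\E\max_k M_k^2\le4\E M_0^2$, the second-moment bound $\E M_0^2\le24n^2$ (which is precisely \eqref{tw9} at $k=0$), and the pointwise comparison $0\le W_k\le M_k$ for the first inequality. Your explicit remark that $W_k\ge0$ is needed to pass from $W_k\le M_k$ to $W_k^2\le M_k^2$ is a small but correct point that the paper leaves implicit.
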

\begin{proof}
    By Doob's inequality 
\cite[Theorem 10.9.4]{Gut}
for the reverse martingale $M_k$
and \eqref{tw9},
\begin{align}\label{lw1}
\E\max_{n_1\ge k\ge 0}M_k^2&
\le 4 \E M_0^2
\le 96n^2.
\end{align}
The result follows, recalling again $W_k\le M_k$.
\end{proof}

We show some further estimates used later.
\begin{lemma}
  \label{L2}
For $1\le k\le n-1$,
\begin{align}\label{l21}
  \P\xpar{Z_k\ge1} &\le \frac{2n}{k^2},
\\\label{l22}
  \P\xpar{Z_k\ge2}& \le \frac{24n^2}{k^4}.
\end{align}
\end{lemma}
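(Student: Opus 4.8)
The plan is to bound the probabilities $\P(Z_k\ge1)$ and $\P(Z_k\ge2)$ by first conditioning on $\cF_k$ and then removing the conditioning using the moment bounds on $W_k=(k+1)Y_k$ already established in \eqref{tw4} and \eqref{tw9}. Since, conditioned on $\cF_k$, $Z_k\in\Bin(Y_k,1/k)$ by \eqref{zk}, we have the elementary bounds $\P(Z_k\ge1\mid\cF_k)\le\E(Z_k\mid\cF_k)=Y_k/k$ and $\P(Z_k\ge2\mid\cF_k)\le\tfrac12\E\bigpar{Z_k(Z_k-1)\mid\cF_k}=\tfrac12\binom{Y_k}{2}(1/k)^2\le Y_k^2/(2k^2)$, using the standard factorial-moment estimate for the binomial. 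These are just Markov's inequality applied to $Z_k$ and to $Z_k(Z_k-1)$ respectively.

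First I would take expectations. For \eqref{l21}, $\P(Z_k\ge1)\le\E(Y_k/k)=\E W_k/(k(k+1))\le 2n/k^2$ by \eqref{tm4} (using $k+1>k$). For \eqref{l22}, $\P(Z_k\ge2)\le\E(Y_k^2)/(2k^2)=\E W_k^2/(2k^2(k+1)^2)\le 24n^2/(2k^4)=12n^2/k^4$ by \eqref{tw9}; the stated constant $24$ is then immediate (indeed one has room to spare, consistent with the paper's stated tolerance for non-optimal constants). Both bounds hold for all $1\le k\le n-1$ as claimed.

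There is really no main obstacle here: the lemma is a routine consequence of the conditional binomial structure \eqref{zk} together with the already-proven first- and second-moment bounds on $W_k$. The only point to be slightly careful about is the factorial-moment inequality $\P(Z_k\ge2)\le\tfrac12\E(Z_k(Z_k-1))$, which follows because $Z_k(Z_k-1)\ge 2$ whenever $Z_k\ge2$ and $Z_k(Z_k-1)=0$ otherwise; and then $\E\bigpar{Z_k(Z_k-1)\mid\cF_k}=Y_k(Y_k-1)/k^2\le Y_k^2/k^2$ for the binomial. Everything else is just substitution of \eqref{tm4} and \eqref{tw9}.
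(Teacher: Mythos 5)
Your proof is correct and follows essentially the same route as the paper: Markov's inequality applied to $Z_k$ and to its second factorial moment conditionally on $\cF_k$, using the binomial law \eqref{zk}, followed by taking expectations via \eqref{tm4} and \eqref{tw9}. You even obtain the slightly sharper constant $12$ in \eqref{l22}, which is consistent with the paper's stated indifference to optimal constants.
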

\begin{proof}
By Markov's inequality,
  \eqref{zk}, and \eqref{tw3}, we have
  \begin{align}\label{l23}
\P\bigpar{Z_k\ge1\mid\cF_k} 
\le \E \bigpar{Z_k\mid\cF_k} 
= \frac{Y_k}{k}\le \frac{W_k}{k^2}
  \end{align}
and
  \begin{align}\label{l24}
\P\bigpar{Z_k\ge2\mid\cF_k} 
\le \E \lrpar{\binom{Z_k}2\Bigm|\cF_k} 
= \binom{Y_k}2\frac{1}{k^2}
\le \frac{Y_k^2}{k^2}\le \frac{W_k^2}{k^4}.
  \end{align}
The results \eqref{l21}--\eqref{l22} 
follow by taking expectations, using \eqref{tm4} and \eqref{tw9}.
\end{proof}

\begin{lemma}  \label{LA}
For $1\le k\le n-1$,
\begin{gather}\label{la0}
A_{k-1}-A_k
\le \frac{W_k^2}{k^3},
\\\label{la}
\E A_k \le12\frac{n^2}{k^2}.
\end{gather}
\end{lemma}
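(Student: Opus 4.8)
The plan is to prove the two estimates in \refL{LA} in order, since the second follows from the first by summation.

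For \eqref{la0}, I would start from the exact formula \eqref{tw4b}, which gives
\[
A_{k-1}-A_k = 2k\bigsqpar{(1-\tfrac1k)^{Y_k}-1+\tfrac{Y_k}{k}}.
\]
So the task reduces to a deterministic inequality: for an integer $m=Y_k\ge0$ and $k\ge1$, one has $(1-\tfrac1k)^m - 1 + \tfrac{m}{k} \le \tfrac{m^2}{2k^2}$. This is just the statement that for $0\le x\le 1$ the function $g(x)=e^{m\log(1-x)} - 1 + mx$ is bounded by $\tfrac12 m^2 x^2$; indeed $(1-x)^m = \sum_{j\ge0}\binom{m}{j}(-x)^j$ is an alternating series with decreasing terms for $0\le x\le1$, so truncating after the quadratic term gives $(1-x)^m \le 1 - mx + \binom{m}{2}x^2 \le 1 - mx + \tfrac12 m^2 x^2$. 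Applying this with $x=1/k$ yields $A_{k-1}-A_k \le 2k\cdot\tfrac12\tfrac{Y_k^2}{k^2} = \tfrac{Y_k^2}{k} \le \tfrac{W_k^2}{k^3}$, where the last step uses \eqref{tw3} in the form $Y_k = W_k/(k+1) \le W_k/k$, so $Y_k^2/k \le W_k^2/k^3$. (One should check the alternating-series argument handles the case where $m$ is large relative to $k$, but since $(1-1/k)^m\ge0$ always and $1-m/k$ can be very negative, the inequality $(1-1/k)^m - 1 + m/k \le m^2/(2k^2)$ still needs $m^2/(2k^2) \ge -1+m/k$, i.e. it is only a genuine constraint for $m\le 2k$ or so; for larger $m$ the left side is $\le m/k \le m^2/(2k^2)$ trivially once $m\ge 2k$. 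I would split into these two ranges to be safe.)

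For \eqref{la}, I would sum \eqref{la0} from $i=k+1$ to $n-1$, using the telescoping identity $A_k = A_k - A_{n-1} = \sum_{i=k+1}^{n-1}(A_{i-1}-A_i)$ together with $A_{n-1}=0$ from \eqref{AAA}. This gives
\[
A_k = \sum_{i=k+1}^{n-1}(A_{i-1}-A_i) \le \sum_{i=k+1}^{n-1}\frac{W_i^2}{i^3}.
\]
Taking expectations and applying the moment bound $\E W_i^2 \le 24 n^2$ from \eqref{tw9},
\[
\E A_k \le 24 n^2 \sum_{i=k+1}^{n-1} \frac{1}{i^3} \le 24 n^2 \sum_{i=k+1}^{\infty}\frac1{i^3} \le 24 n^2 \cdot \frac{1}{2k^2} = \frac{12 n^2}{k^2},
\]
where I bound the tail $\sum_{i>k} i^{-3} \le \int_k^\infty x^{-3}\dd x = 1/(2k^2)$. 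This gives exactly \eqref{la}.

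I do not expect a serious obstacle here; both parts are routine once \eqref{tw4b} and \eqref{tw9} are in hand. The one point requiring a little care is the elementary inequality behind \eqref{la0} — specifically making sure the alternating-series truncation argument is valid across the full range of $Y_k$, including the regime $Y_k > k$ where the naive bound $(1-1/k)^{Y_k}\le e^{-Y_k/k}$ is cleaner and the crude estimate $(1-1/k)^{Y_k} - 1 + Y_k/k \le Y_k/k \le Y_k^2/(2k^2)$ (valid when $Y_k\ge 2k$) finishes the job. Handling the two ranges $Y_k\le 2k$ and $Y_k\ge 2k$ separately makes the deterministic step completely elementary, and everything else is telescoping plus the already-established second-moment bound.
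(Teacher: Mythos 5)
Your proposal is correct and follows essentially the same route as the paper: the exact formula \eqref{tw4b}, the truncated binomial expansion $(1-\tfrac1k)^{Y_k}\le 1-\tfrac{Y_k}{k}+\binom{Y_k}{2}k^{-2}$, then telescoping with $A_{n-1}=0$, the bound $\E W_i^2\le 24n^2$ from \eqref{tw9}, and the integral tail estimate $\sum_{i>k}i^{-3}\le 1/(2k^2)$. The case-split you worry about is unnecessary, since the Bonferroni inequality $(1-x)^m\le 1-mx+\binom{m}{2}x^2$ holds for all integers $m\ge0$ and all $x\in[0,1]$, which is exactly what the paper invokes.
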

\begin{proof}
By \eqref{tw4b} and Taylor's formula (or the Bonferroni inequalities),
  \begin{align}\label{la1}
A_{k-1}-A_k&
=2k\Bigpar{\bigpar{1-\frac1k}^{Y_k}-1+\frac{Y_k}{k}}
\le 2k \binom{Y_k}2\frac{1}{k^2}
\le \frac{Y_k^2}{k}\le \frac{W_k^2}{k^3},
  \end{align}
which is \eqref{la0}.
As a consequence,
 \begin{align}\label{la2}
   A_k\le\sum_{i=k+1}^{n-1}\frac{W_i^2}{i^3}
 \end{align}
and thus, by \eqref{tw9},
 \begin{align}\label{la3}
\E A_k\le\sum_{i=k+1}^{n-1}\frac{\E W_i^2}{i^3}
\le24 n^2 \sum_{i=k+1}^{\infty}\frac{1}{i^3}
\le12\frac{n^2}{k^2}.
 \end{align}
\end{proof}

\section{Phase I: a Yule process}\label{SI}

In this section we consider the first part of the evolution of the red
dag $\DD_n$, and consider the variables $Y_{n-1},...,Y_{n_1}$,
where (for definiteness) we let $n_1:=n_1\nn:=\floor{n/\log n}$.
(We might choose $n_1=n_1\nn$ as any (deterministic) sequence of integers
such that $n_1/n\to0$ slowly; in particular, any such sequence with $n_1\ge
n/\log n$ will also do.
We leave it to the reader to see precisely how small $n_1$ can be.)
We will show that
the variables $Y_{n-1},...,Y_{n_1}$ can be approximated
(as \ntoo) by a time-changed \emph{Yule process}.

Recall that the {Yule process} is a continuous-time branching process,
where each particle lives a lifetime that has an exponential $\Exp(1)$
distribution, and then the particle splits into two new particles.
(All lifetimes are independent.) 
Let $\cY_t$ be the number of particles at time $t$.
The standard version, which we denote by $\cYY_t$, 
starts with one particle at time 0, but we 
start with $\cY_0=2$; thus the process 
$\cY_t$ can be seen as the sum of two independent
copies of the standard Yule process $\cYY_t$.

It is well known, see \eg{} \cite[Section III.5]{AN}, that for the standard
Yule process, the number of
particles at time $t$ has the geometric distribution $\Ge(\et)$ with
mean $e^t$ and
\begin{equation}
  \label{a1}
\P(|\cYY_t|=k) = \et\bigpar{1-\et}^{k-1},
\qquad k\ge1.
\end{equation}
Moreover,
$\cYY_t/e^t\asto\hxi$ as \ttoo, where (e.g.\ as a consequence of \eqref{a1})
$\hxi\in\Exp(1)$.
Hence, $\cY_t$ has a shifted negative binomial distribution
$\NegBin(2,\et)+2$ with 
\begin{equation}
  \label{a2}
\P(|\cY_t|=k) = (k-1)e^{-2t}\bigpar{1-\et}^{k-2},
\qquad k\ge2.
\end{equation}
In particular, for all $t\ge0$ we have
\begin{align}
  \label{th1}
\E\cY_t=
2\E\cYY_t
=2e^t, 
\end{align}
and, as \ttoo,
\begin{align}\label{thx}
e^{-t} \cY_t\asto\xi:= \hxi_1+\hxi_2\in\gG(2),
\end{align}
with $\hxi_1,\hxi_2\in\Exp(1)$ independent, so that their sum has 
a Gamma distribution.

We may also regard the Yule process $\cY$ as an infinite
tree (the \emph{Yule tree}), with
one vertex $\gam_0:=0$ (the root), and one vertex $\gam_i$ at each time a
particle 
splits (a.s.\ these times are distinct, and we may number them in increasing
order); each particle is then represented by an edge from its time of birth
to its time of death. Note that $\cY_t$, the number of living particles,
equals the number of edges alive at time $t$, and that the number of
particles that have died before (or at) $t$ is $\cY_t-2$.

We now change time by the mapping $t\mapsto\et$; thus the vertices in the
Yule tree are mapped to the points $e^{-\gam_i}\in\ooi$. The root is now at
1, and edges go from a larger label to a smaller.
If a particle is born at one of these times $x=e^{-\gam_i}$, and its lifetime
in the original Yule process is $\tau\in\Exp(1)$, then it lives there from
$\gam_i$ to $\gam_i+\tau$, and after the time change it is represented by an
edge from $x=e^{-\gam_i}$ to $e^{-(\gam_i+\tau)}=xe^{-\tau}=xU$, where
$U:=e^{-\tau}\in\Uoi$ has a uniform distribution.
Going backwards in time, we thus begin with two particles (edges) starting at 1.
Each edge starting at a point $x$ has endpoints $xU_x'$ and $xU_x''$, where
$U_x',U_x''\in\Uoi$, and all these uniform random variables are
independent. As before, we start two new edges at each endpoint.
We let $\hcY$ denote this (infinite) random tree with vertices in $\ooi$, and
let $\hcY_x$ be the number of particles (edges) alive at time $x$.

We may now compare the time-changed Yule tree to the red dag $\DD_n$
constructed above, scaled to \oi. 
An edge from a vertex $k$ 
ends at a vertex uniformly distributed on \set{1,\dots,k-1}, which we may
construct as $\floor{(k-1)U}+1$, where $U\in\Uoi$.
We thus start with one point at $n$, and add again two edges from it and
from the endpoint of every edge (except at 1), where now
an edge started at $j+1$ goes to $\floor{jU}+1$ with $U\in\Uoi$.
However, if two or more edges have the same endpoint, we still only start two
new edges there.

A point in $\DD_n$ that is $m$ generations away from the root, thus has
label
\begin{align}\label{tom1}
  X=\floor{\cdots\floor{(n-1)U_\nux1}\dotsm U_\nux{m}}+1
,\end{align}
for the some $U_\nux1,\dots,U_\nux{m}\in\oi$ (from the construction of the
edges), and then
\begin{align}\label{tom2}
  nU_\nux1\dotsm U_\nux{m}+1 \ge X \ge nU_\nux1\dotsm U_\nux{m} -m.
\end{align}
Let $\hhD_n$ denote the random red dag $\DD_n$
with all labels divided by $n$; thus the vertices are now points in $\ooi$.
We then see that $\hhD_n$ coincides with the time-changed Yule tree
up to small errors.
More precisely, we couple the two by first constructing the Yule tree $\cY$, and
its time-changed version $\hcY$, and then making a perturbation of $\hcY$ by
replacing each 
label $U_\nux{1}\dotsm U_\nux{m}$ by $X/n$ with $X$ as in \eqref{tom1}.
This gives a dag that coincides (in distribution) with $\hhD_n$ until the
first time that two edges in $\hhD_n$ have the same endpoint.

\begin{theorem}\label{TYule}
  We may \whp{} couple the random dag $\hhD_n$ and 
  the time-changed Yule tree $\hcY$, such that 
considering only vertices with
  labels in $[n_1/n,1]$, and edges with the starting point in this set,
there is a bijection between these sets of vertices in the two models 
which displaces each label by at most $\log^2 n/n$,
and a corresponding bijection between the edges (preserving the incidence
relations).
\end{theorem}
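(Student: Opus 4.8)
The plan is to build the coupling explicitly by running the two processes in tandem generation by generation, and to control the total discrepancy by a union bound over the (few) edges that can reach the window $[n_1/n,1]$. Concretely, I would construct the Yule tree $\cY$ first, pass to its time-changed version $\hcY$ (vertices in $\ooi$, each edge from $x$ going to $xU$ with $U\in\Uoi$), and then define the perturbed tree exactly as in the paragraph preceding the statement: keep the same underlying tree structure and the same uniforms $U_\nux1,U_\nux2,\dots$, but relabel a vertex at depth $m$ from having label $U_\nux1\dotsm U_\nux m$ to having label $X/n$, where $X=\floor{\cdots\floor{(n-1)U_\nux1}\dotsm U_\nux m}+1$ as in \eqref{tom1}. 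By the remark after \eqref{tom2} this perturbed tree agrees in distribution with $\hhD_n$ \emph{up to the first moment two of its edges share an endpoint}. So the proof has two independent parts: (a) the perturbation is uniformly tiny on the window, and (b) \whp{} no collision occurs among the edges relevant to the window.

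For part (a): from \eqref{tom2}, a depth-$m$ vertex has $|X/n - U_\nux1\dotsm U_\nux m|\le (m+1)/n$. Thus I only need to bound the depth $m$ of vertices with label in $[n_1/n,1]$. Since $n_1=\floor{n/\log n}$, a vertex at normalized label $\ge n_1/n\sim 1/\log n$ satisfies $U_\nux1\dotsm U_\nux m\gtrsim 1/\log n$ (up to the $O(m/n)$ slack), which forces $-\sum_{i=1}^m\log U_\nux i\le \log\log n + o(1)$. But $-\log U_\nux i\in\Exp(1)$ are \iid{}, so $\sum_{i=1}^m(-\log U_\nux i)$ is a sum of \iid{} exponentials, and the number of edges of the Yule tree alive at time $t=\log\log n$ is exactly $\hcY_{e^{-t}}=\cY_{\log\log n}$, which has expectation $2\log\log n$ by \eqref{th1} and is $\Op(\log\log n)$. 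Hence \whp{} \emph{every} vertex with label in the window has depth $m\le \log^2 n$, say (with enormous room to spare), so the displacement $(m+1)/n\le \log^2 n/n$ for all of them simultaneously. (One can be cruder: the total number of vertices of $\hcY$ with label $\ge n_1/n$ is itself $\Op(\log n)$ by the same computation, so a fortiori every such vertex has depth $\le$ that count.)

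For part (b): I must show that \whp{} no two edges of $\hhD_n$ that are incident to the window share an endpoint — equivalently, in the backward construction, that among the $Y_k$ edges crossing the gap at each $k$ with $k\ge n_1$, at most one lands on $k$; i.e.\ $Z_k\le1$ for all $k\ge n_1$. This is precisely where \refL{L2} is used: by \eqref{l22}, $\P(Z_k\ge2)\le 24 n^2/k^4$, so
\begin{align*}
\P\Bigpar{\exists k\ge n_1:\ Z_k\ge2}\le \sum_{k=n_1}^{n-1}\frac{24n^2}{k^4}
\le \CC\,\frac{n^2}{n_1^{3}} = \CC\,\frac{(\log n)^3}{n}\to0.
\end{align*}
On the complementary event every endpoint in the window is hit by a unique edge, so the two descriptions of the red dag literally coincide on the window: the perturbed time-changed Yule tree restricted to labels in $[n_1/n,1]$ \emph{equals} $\hhD_n$ restricted there, vertex for vertex and edge for edge. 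Combining (a) and (b), on an event of probability $1-o(1)$ we have the desired bijection on $[n_1/n,1]$ with every label moved by at most $\log^2 n/n$, and edges matched with incidences preserved.

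**The main obstacle** is part (b) — making sure the no-collision event really covers \emph{all} edges touching the window, including edges whose starting point lies just outside $[n_1/n,1]$ but whose endpoint lies inside. The clean way around this is exactly the backward/gap formulation already set up in \refS{Sbasic}: "collisions relevant to the window" is the same as "$Z_k\ge2$ for some $k\ge n_1$", and that event is controlled by \refL{L2} as above. The only mild care needed is bookkeeping: a collision at some $k<n_1$ can in principle propagate a structural difference forward to an edge landing at a point $\ge n_1/n$ — but in the backward construction the edges crossing the gap at $k\ge n_1$ and their endpoints in $\{n_1,\dots,n-1\}$ are determined by the coin tosses at $n-1,\dots,n_1$ only, so nothing from below $n_1$ can affect the window; hence the restriction of $\hhD_n$ to $[n_1/n,1]$ is a measurable function of $\cF_{n_1}$ and the argument closes. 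The displacement bound in part (a) is then just the deterministic estimate \eqref{tom2} together with the (generous) depth bound, which is routine.
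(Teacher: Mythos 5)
Your argument is essentially the paper's own proof: the same depth/displacement bound via \eqref{tom2} and the $O_p(\log n)$ count of window vertices from \eqref{th3}, and the same use of \refL{L2} to rule out collisions at all $k\ge n_1$ (the paper's \eqref{fb2} is your part (b) verbatim). Your closing observation that the window restriction of $\hhD_n$ is measurable with respect to the coin tosses at $n-1,\dots,n_1$ is also the right way to see why collisions below $n_1$ are irrelevant.

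There is one step you omit which is needed for the bijection to exist \emph{as stated}. The two vertex sets being matched are each defined intrinsically by the condition ``label in $[n_1/n,1]$'' in its own model. A vertex of $\hcY$ with label slightly above $n_1/n$ could, after a displacement of size up to $\llnn$, end up with label below $n_1/n$ in $\hhD_n$ (or vice versa); such a vertex would belong to one set but not the other, and the map would not be a bijection between the two sets. You bound the \emph{size} of the displacement but never check that it does not push any vertex across the boundary. The paper handles this with a separate first-moment computation: by \eqref{th3}, the expected number of $\hcY$-vertices with label within $\llnn$ of $n_1/n$ is $\sim 4\llnn/(n_1/n)^2=O(\log^4 n/n)=o(1)$, so \whp{} no vertex sits close enough to the boundary to be moved across. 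This is a one-line fix, but without it the claimed bijection between the two label-defined sets is not established.
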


\begin{proof}
We have $\hcY_x = \cY_{-\log x}$ for every $x\in\ooi$, and thus by \eqref{th1}
\begin{align}\label{th3}
  \E\hcY_{x}=\E\cY_{-\log x} = 2e^{-\log x} = 2/x.
\end{align}
The number of vertices with labels in $[x,1]$ is $\hcY_x-1$, and taking
$x=n_1/n\sim 1/\log n$, we thus have $O_p(\log n)$ vertices; in particular
\whp{} less than $\log^2 n$ vertices. Consequently, \whp,
the number of generations from
the root to any point in $[n_1/n,1]$ is at most $\log^2n$, and then
the bound \eqref{tom2} shows that all vertex displacements are at most
$\llnn$.

Furthermore, it follows from \eqref{th3} that the expected number of
vertices in $\hcY$ that are within $\llnn$ from $n_1/n$ is 
\begin{align}\label{fb1}
 \E\bigpar{\hcY_{n_1/n-\llnn}-\hcY_{n_1/n+\llnn}}&
= \frac{2}{n_1/n-\llnn}-\frac{2}{n_1/n+\llnn}
\notag\\&
\sim\frac{4\llnn}{(n_1/n)^2}
=O\lrpar{\frac{\log^4n}{n}}=o(1),
\end{align}
and thus \whp{} no vertex is pushed across the boundary $n_1/n$ by the
displacements in the coupling.

Finally,
  it follows from \refL{L2} that the probability that two edges in the
  dag $\DD_n$ have the same   endpoint $k$ for some $k\ge n_1$ is at most
  \begin{align}\label{fb2}
    \sum_{k=n_1}^{n-1}\P(Z_k\ge2) 
\le 24n^2\sum_{k=n_1}^\infty k^{-4} =
    O\bigpar{n^2/n_1^3}
=o(1).
  \end{align}
Consequently, \whp{} the coupling above between $\hcY$ and $\DD_n$ yields a
bijection for vertices in $[n_1/n,n]$ and their edges.
\end{proof}

We define a random variable that will play an important role later: let
\begin{align}\label{Xi}
  \Xi=\Xi\nn:=\frac{W_{n_1}}n.
\end{align}

\begin{lemma}\label{LXi}
As \ntoo,
\begin{align}\label{lxi}
\Xi\nn=
\frac{W\nn_{n_1}}n\dto \xi\in\gG(2).
\end{align}
\end{lemma}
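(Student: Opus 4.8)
The plan is to transfer the almost-sure convergence of the time-changed Yule process to the random dag via the coupling of \refT{TYule}. Recall from \eqref{tw3} and \eqref{Xi} that $\Xi\nn=(n_1+1)Y_{n_1}\nn/n$, and that $Y_{n_1}\nn$ is precisely the number of edges of $\DD_n$ crossing the gap just below $n_1$, which under the scaling by $n$ is the number of edges of $\hhD_n$ alive at the point $n_1/n$. On the Yule side, the analogous quantity is $\hcY_{n_1/n}=\cY_{-\log(n_1/n)}$, the number of edges alive at $n_1/n$. By \eqref{thx}, $e^{-t}\cY_t\asto\xi\in\gG(2)$ as \ttoo; since $n_1/n\to0$, we have $-\log(n_1/n)\to\infty$, so $\tfrac{n_1}{n}\hcY_{n_1/n}=e^{\log(n_1/n)}\cY_{-\log(n_1/n)}\asto\xi$, hence in particular $\tfrac{n_1}{n}\hcY_{n_1/n}\dto\xi$.

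So the first step is: on the coupling event of \refT{TYule} (which holds \whp), the set of edges of $\hhD_n$ alive at $n_1/n$ is in bijection with the set of edges of $\hcY$ alive at $n_1/n$ — indeed the coupling provides a bijection of vertices in $[n_1/n,1]$ preserving edges and moving labels by at most $\log^2 n/n$, and \eqref{fb1} ensures \whp{} no vertex (hence no edge endpoint, hence no ``alive at $n_1/n$'' status) is pushed across the boundary $n_1/n$. Therefore, \whp, $Y_{n_1}\nn=\hcY_{n_1/n}$ exactly (as integers), and consequently
\begin{align*}
\Xi\nn=\frac{(n_1+1)Y_{n_1}\nn}{n}=\frac{n_1+1}{n_1}\cdot\frac{n_1}{n}\,\hcY_{n_1/n},
\end{align*}
where $\tfrac{n_1+1}{n_1}\to1$. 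The second step is then purely elementary: combining the \whp{} identity $\Xi\nn=\tfrac{n_1+1}{n_1}\cdot\tfrac{n_1}{n}\hcY_{n_1/n}$ with $\tfrac{n_1}{n}\hcY_{n_1/n}\dto\xi$ and $\tfrac{n_1+1}{n_1}\to1$ gives, by Slutsky's theorem (and the fact that an event of probability $\to1$ does not affect convergence in distribution), that $\Xi\nn\dto\xi\in\gG(2)$, which is \eqref{lxi}.

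The only real subtlety is making sure the quantity $\Xi\nn$ really does match $\tfrac{n_1}{n}\hcY_{n_1/n}$ up to the asymptotically negligible factor $\tfrac{n_1+1}{n_1}$: one must check that ``number of red edges crossing the gap below $n_1$ in $D_n$'' corresponds under the label-scaling to ``number of edges of $\hhD_n$ alive at $n_1/n$'', and that the $\log^2 n/n$ displacements in the coupling, together with the boundary estimate \eqref{fb1} and the no-double-endpoint estimate \eqref{fb2}, genuinely prevent any mismatch in this integer count. All of this is already packaged in \refT{TYule} and the estimates in its proof, so the argument is short; the main (minor) obstacle is simply bookkeeping the bijection carefully at the boundary point $n_1/n$. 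No moment bounds are needed here — those come later in \refS{Smom} — so convergence in distribution is exactly what we get and all that is claimed.
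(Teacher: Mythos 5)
Your proposal is correct and follows essentially the same route as the paper: couple $\hhD_n$ with the time-changed Yule tree via Theorem~\ref{TYule}, use the boundary estimate~\eqref{fb1} to match the edge count at level $n_1/n$ (the paper phrases this as a sandwich $\hcY_{n_1/n+\llnn}\le Y_{n_1}\nn\le\hcY_{n_1/n-\llnn}$ implying $|Y_{n_1}\nn-\hcY_{n_1/n}|\pto0$, which for integers is exactly your whp equality), and then invoke~\eqref{thx} to transfer the a.s.\ limit $\xi\in\gG(2)$. The final Slutsky/uncoupling step matches the paper's as well.
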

\begin{proof}
 We use the coupling in \refT{TYule} for each $n$, recalling \refR{Rcoupling}. 
Then, \whp,
  \begin{align}\label{ly1}
\hcY_{n_1/n+\llnn}\le Y_{n_1}\nn \le \hcY_{n_1/n-\llnn}
  \end{align}
and thus
\begin{align}\label{ly2}
  \bigabs{Y\nn_{n_1}-\hcY_{n_1/n}}\le {\hcY_{n_1/n-\llnn}-\hcY_{n_1/n+\llnn}}.
\end{align}
In particular, \eqref{fb1} implies
\begin{align}\label{ly3}
 \bigabs{Y\nn_{n_1}-\hcY_{n_1/n}}\pto0
.\end{align}
Moreover, 
\eqref{thx} implies 
\begin{align}\label{ly4}
  x\hcY_{x} =x\cY_{-\log x}
\asto \xi
\qquad \text{as $x\to0$},
\end{align}
with $\xi\in\gG(2)$.
Consequently, by \eqref{ly3} and \eqref{ly4},
\begin{align}\label{ly5}
\frac{n_1}n Y\nn_{n_1}
=
\frac{n_1}n \bigpar{Y\nn_{n_1}-\hcY_{n_1/n}} + \frac{n_1}n \hcY_{n_1/n}
\pto \xi.
\end{align}
Hence, recalling \eqref{tw3},
\begin{align}\label{ly6}
  \frac{W\nn_{n_1}}{n}
=\frac{n_1+1}{n_1}\cdot \frac{n_1}n Y\nn_{n_1}\pto\xi.
\end{align}
The convergence in probability in \eqref{ly5}--\eqref{ly6} depends on the
coupling used above, but it follows that convergence in distribution holds
also without it, which completes the proof.
\end{proof}

\section{Phase II: a boring flat part}\label{SII}
Let $n_2=n_2\nn$ be any sequence of integers with $\sqrt n \ll n_2 \le n_1$.
We will show that in the range $n_1\ge k\ge n_2$,
the variable $W_k$ essentially does not change, so it is equal to a random
constant. We begin with two lemmas valid for larger ranges. 

\begin{lemma}\label{LAII}
  As \ntoo,
\begin{align}\label{laii}
    \max_{n-1\ge k\ge n_2}\lrabs{\frac{A_k}{n}}
=\frac{A_{n_2}}{n}
\plito0.
  \end{align}
\end{lemma}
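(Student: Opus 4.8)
The plan is to use the bound \eqref{la} from \refL{LA}, namely $\E A_k \le 12 n^2/k^2$, together with the monotonicity \eqref{AAA}. First I would note that the equality $\max_{n-1\ge k\ge n_2} A_k/n = A_{n_2}/n$ is immediate from \eqref{AAA}, since $A_k$ is reverse increasing, so the maximum over $k\in\set{n_2,\dots,n-1}$ is attained at the smallest index $k=n_2$. Thus it suffices to prove $A_{n_2}/n\pto 0$.

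For this, apply \eqref{la} with $k=n_2$: we get $\E A_{n_2}\le 12 n^2/n_2^2$, hence
\begin{align*}
  \E\frac{A_{n_2}}{n}\le \frac{12 n}{n_2^2}.
\end{align*}
Since $n_2\gg\sqrt n$ by assumption (i.e.\ $n/n_2^2\to0$), the \rhs{} tends to $0$, so $A_{n_2}/n\lito 0$, and in particular $A_{n_2}/n\pto 0$. Combining with the first observation gives \eqref{laii}. (As noted in the remark before \refL{LW*}, this actually yields $L^1$ convergence, which is stronger than the stated convergence in probability.)

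There is essentially no obstacle here: the lemma is a direct consequence of the already-established moment bound \refL{LA}\eqref{la} and the structural monotonicity \eqref{AAA}, exploiting the hypothesis $\sqrt n\ll n_2$. The only point worth a line of care is spelling out that the condition $\sqrt n\ll n_2$ is exactly what makes $n/n_2^2\to0$, so that $12n/n_2^2\to0$; everything else is bookkeeping.
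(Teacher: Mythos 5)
Your proof is correct and is essentially identical to the paper's: both apply the bound $\E A_{n_2}\le 12n^2/n_2^2$ from Lemma~\ref{LA}, use $n_2\gg\sqrt n$ to conclude $L^1$ (hence in probability) convergence, and invoke the monotonicity \eqref{AAA} to identify the maximum with $A_{n_2}/n$.
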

\begin{proof}
  By \refL{LA},
  \begin{align}
    \E \frac{A_{n_2}}{n}
\le 12 \frac{n}{n_2^2}=o(1),
  \end{align}
which shows \eqref{laii}, recalling \eqref{AAA}.
\end{proof}

\begin{lemma}\label{LM}
As \ntoo,
\begin{align}\label{lm2}
    \max_{n_1\ge k\ge 0}\lrabs{\frac{M_k}{n}-\Xi\nn}\plito0.
  \end{align}
\end{lemma}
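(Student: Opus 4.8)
The plan is to show that the reverse martingale $M_k$ is essentially constant on the range $n_1\ge k\ge 0$, with that constant being $M_{n_1}$, and then to identify $M_{n_1}$ with $W_{n_1}=n\Xi\nn$ up to a negligible error. First I would split the quantity to be bounded by the triangle inequality:
\begin{align}\label{lm-plan1}
  \max_{n_1\ge k\ge 0}\Bigabs{\frac{M_k}{n}-\Xi\nn}
  \le \max_{n_1\ge k\ge 0}\Bigabs{\frac{M_k-M_{n_1}}{n}}
     + \Bigabs{\frac{M_{n_1}-W_{n_1}}{n}}.
\end{align}
The second term is easy: by the Doob decomposition \eqref{tm1}, $M_{n_1}-W_{n_1}=A_{n_1}$, and \refL{LAII} (with $n_2$ replaced by $n_1$, or directly \refL{LA}) gives $\E(A_{n_1}/n)\le 12 n/n_1^2=o(1)$ since $n_1=\floor{n/\log n}$, so $A_{n_1}/n\plito0$. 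This is also why $\Xi\nn=W_{n_1}/n$ and $M_{n_1}/n$ have the same limit, consistent with \refL{LXi}.

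For the first term in \eqref{lm-plan1} I would use that $M_{n_1},M_{n_1-1},\dots,M_0$ is itself a reverse martingale (a sub-sequence of a reverse martingale), with initial value $M_{n_1}$, so $M_k-M_{n_1}$ for $n_1\ge k\ge 0$ is a mean-zero reverse martingale started at $0$. Doob's $L^2$ maximal inequality then gives
\begin{align}\label{lm-plan2}
  \E\max_{n_1\ge k\ge 0}\bigpar{M_k-M_{n_1}}^2
  \le 4\,\E\bigpar{M_0-M_{n_1}}^2
  = 4\sum_{i=1}^{n_1}\E\Var\bigpar{W_{i-1}\mid\cF_i},
\end{align}
exactly as in the computation \eqref{tw8} but with the sum now running only up to $i=n_1$. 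Using \eqref{tw7} ($\Var(W_{i-1}\mid\cF_i)\le 10W_i$) and $\E W_i\le 2n$ from \eqref{tm4}, the right-hand side of \eqref{lm-plan2} is at most $4\cdot 10\cdot n_1\cdot 2n=80\,n_1 n=o(n^2)$, since $n_1=o(n)$. Dividing by $n^2$, Markov's inequality gives $\max_{n_1\ge k\ge 0}\abs{(M_k-M_{n_1})/n}\plito 0$ (indeed $\lito 0$). Combining with the bound on the second term of \eqref{lm-plan1} completes the proof.

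I do not expect a genuine obstacle here; the only point requiring slight care is the bookkeeping of indices — one must apply Doob's inequality to the reverse martingale restricted to $k\le n_1$ (so that its "initial" value is $M_{n_1}$, not $M_{n-1}=2n$), which is what produces the partial sum $\sum_{i=1}^{n_1}$ rather than $\sum_{i=1}^{n-1}$ and hence the crucial gain of a factor $n_1/n=o(1)$ over the crude bound \eqref{tw8}. Everything else is a direct reuse of \refLs{LA}, the variance estimate \eqref{tw7}, and \eqref{tm4}.
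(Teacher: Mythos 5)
Your proposal is correct and follows essentially the same route as the paper: the same triangle-inequality split into $\max_k|M_k-M_{n_1}|/n$ and $|A_{n_1}|/n$, the same application of Doob's $L^2$ maximal inequality to the reverse martingale restricted to $k\le n_1$ yielding the bound $80nn_1=o(n^2)$, and the same use of \refL{LA} (via \refL{LAII}) for the $A_{n_1}$ term. No gaps.
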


\begin{proof}
    By Doob's inequality for the reverse martingale $M_k$
and using \eqref{tm2}, \eqref{tw7} and \eqref{tm4} as in \eqref{tw8}
(\cf{} the proof of \refL{LW*}),
\begin{align}\label{fb8}
\E\max_{n_1\ge k\ge 0}|M_k-M_{n_1}|^2&
\le 4 \E|M_0-M_{n_1}|^2
=4\sum_{i=1}^{n_1}\E\Var\bigpar{W_{i-1}\mid\cF_i}
\notag\\&
\le40\sum_{i=1}^{n_1}\E W_{i}
\le 80nn_1 = o(n^2).
\end{align}
We have, using
\eqref{Xi} and  $W_{n_1}=M_{n_1}-A_{n_1}$,
\begin{align}\label{lm3}
    \max_{n_1\ge k\ge 0}\lrabs{\frac{M_k}{n}-\Xi}
\le
    \max_{n_1\ge k\ge 0}\lrabs{\frac{M_k}{n}-\frac{M_{n_1}}n}
+\lrabs{\frac{A_{n_1}}n}
\plito0,
  \end{align}
where the convergence follows by \eqref{fb8} 
and \refL{LAII}.
\end{proof}

\begin{theorem}\label{TWXi}
As \ntoo,
  \begin{align}
    \max_{n_1\ge k\ge n_2}\lrabs{\frac{W_k}{n}-\Xi\nn}\plito0.
  \end{align}
\end{theorem}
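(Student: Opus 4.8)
The plan is to combine the Doob decomposition $W_k = M_k - A_k$ from \eqref{tm1} with the two preceding lemmas. By \refL{LM}, the martingale part $M_k/n$ is uniformly close to $\Xi\nn$ over the range $n_1\ge k\ge0$, which in particular covers $n_1\ge k\ge n_2$. By \refL{LAII}, the compensator $A_k/n$ is uniformly small over the range $n-1\ge k\ge n_2$, which again covers $n_1\ge k\ge n_2$. So the whole proof is essentially a triangle inequality: for $n_1\ge k\ge n_2$,
\begin{align}
  \lrabs{\frac{W_k}{n}-\Xi\nn}
  \le \lrabs{\frac{M_k}{n}-\Xi\nn} + \lrabs{\frac{A_k}{n}}
  \le \max_{n_1\ge k\ge 0}\lrabs{\frac{M_k}{n}-\Xi\nn} + \frac{A_{n_2}}{n},
\end{align}
using \eqref{tm1} for the first step and \eqref{AAA} (so that $A_k\le A_{n_2}$ for $k\ge n_2$, and $A_k\ge0$) for the second. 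Taking the maximum over $n_1\ge k\ge n_2$ on the left, the right-hand side does not depend on $k$, and both terms converge to $0$ in probability (indeed in $L^1$) by \refLs{LM} and \ref{LAII} respectively. Hence the left-hand side $\plito 0$.

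There is essentially no obstacle here — this theorem is a bookkeeping corollary of the two lemmas just proved, and all the real work (the $L^2$ estimate on martingale increments via Doob's inequality in \eqref{fb8}, and the bound $\E A_k \le 12 n^2/k^2$ in \refL{LA}) has already been done. The only point requiring a moment's care is checking that the ranges of validity of the two lemmas both contain the interval $[n_2,n_1]$: \refL{LM} is stated for $n_1\ge k\ge 0$ and \refL{LAII} for $n-1\ge k\ge n_2$, and since $n_2\le n_1$ by the choice of $n_2$ (recall $\sqrt n \ll n_2 \le n_1$ from the start of \refS{SII}), the intersection $[n_2,n_1]$ is exactly the range we need. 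One should also note that although \refL{LM}'s conclusion involves the coupling-independent random variable $\Xi\nn$ and there is no coupling subtlety here (unlike in \refL{LXi}), so the convergence statement is clean.

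If one wants the slightly stronger $L^1$ conclusion mentioned in the remark after \refSS{SSnot}, the same display works verbatim after taking expectations, since $\E\max_{n_1\ge k\ge0}|M_k/n - \Xi\nn| \to 0$ follows from \eqref{fb8} together with \refL{LXi} (or directly from the $L^1$ version of \refL{LM}), and $\E A_{n_2}/n \to 0$ is exactly what the proof of \refL{LAII} gives. I would keep the statement as convergence in probability to match the surrounding text, but note parenthetically that $L^1$ convergence in fact holds.
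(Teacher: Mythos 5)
Your proof is correct and is essentially identical to the paper's: both apply the triangle inequality to the Doob decomposition $W_k=M_k-A_k$ and invoke Lemmas \ref{LAII} and \ref{LM} to control the two terms uniformly over $n_1\ge k\ge n_2$. The additional remarks on the ranges of validity and on $L^1$ convergence are accurate but not needed beyond what the paper records.
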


\begin{proof}
We have, for any $k$,
\begin{align}\label{lm4}
\lrabs{\frac{W_k}{n}-\Xi}
\le
\lrabs{\frac{M_k}{n}-\Xi}
+\lrabs{\frac{A_{k}}n}
  \end{align}
and thus the result follows from \refLs{LAII} and \ref{LM}.
\end{proof}

\section{Phase III: deterministic decay from a random level}\label{SIII}

We extend the processes $W_k$, $M_k$ and $A_k$ to real arguments $t\in[0,n-1]$
by linear interpolation. Since the extended version $A_t$ is piecewise
linear, it is differentiable everywhere except at integer points, where
we (arbitrarily) take the left derivative.

\begin{lemma}\label{LdA}
  Let $\gd>0$.
Then
\begin{align}\label{lda}
  \E \lrsqpar{\sup_{\gd\sqrt n \le t \le n-1}\lrabs{\frac{\ddx}{\dd t}A_t}}
\le 
\frac{96}{\gd^3}n\qq
.\end{align}
\end{lemma}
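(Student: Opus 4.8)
The plan is to bound the increment $A_{k-1}-A_k$ pointwise using the estimate \eqref{la0} from \refL{LA}, then control the supremum of $W_k^2$ via the maximal inequality in \refL{LW*}. Since $A_t$ is piecewise linear between integers, for $t\in(k-1,k]$ with $k\ge\gd\sqrt n$ we have $\lrabs{\frac{\ddx}{\dd t}A_t}=A_{k-1}-A_k$, and hence, recalling that $k\ge\gd\sqrt n$ means $k^3\ge\gd^3 n^{3/2}$,
\begin{align}
\sup_{\gd\sqrt n\le t\le n-1}\lrabs{\frac{\ddx}{\dd t}A_t}
\le \max_{\gd\sqrt n\le k\le n-1}\frac{W_k^2}{k^3}
\le \frac{1}{\gd^3 n^{3/2}}\max_{0\le k\le n-1}W_k^2.
\end{align}

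Taking expectations and applying \refL{LW*} then gives
\begin{align}
\E\lrsqpar{\sup_{\gd\sqrt n\le t\le n-1}\lrabs{\frac{\ddx}{\dd t}A_t}}
\le \frac{1}{\gd^3 n^{3/2}}\,\E\max_{0\le k\le n-1}W_k^2
\le \frac{96 n^2}{\gd^3 n^{3/2}}
=\frac{96}{\gd^3}n\qq,
\end{align}
which is exactly \eqref{lda}. The only mild technical point is the boundary case where $\gd\sqrt n$ is not an integer: one restricts the maximum over integers $k$ with $k\ge\gd\sqrt n$, i.e.\ $k\ge\ceil{\gd\sqrt n}$, and uses that for such $k$ still $k^3\ge(\gd\sqrt n)^3=\gd^3 n^{3/2}$, so nothing changes.

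I expect this proof to be essentially immediate — there is no real obstacle, since both ingredients (\refL{LA} for the pointwise increment bound and \refL{LW*} for the expected maximum of $W_k^2$) are already in hand; the argument is just combining them with the observation that the derivative of the interpolated $A_t$ on each unit interval equals the corresponding increment $A_{k-1}-A_k$, which is nonnegative by \eqref{AAA}. The one thing to be careful about is not to lose the factor by summing increments instead of taking a maximum: we genuinely want the supremum of the derivative (a single increment at the worst $k$), not the total variation, so \refL{LW*} — a maximal inequality — is the right tool rather than the weaker \eqref{tw9}.
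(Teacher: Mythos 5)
Your proof is correct and matches the paper's argument: both bound the derivative on $(k-1,k]$ by $A_{k-1}-A_k\le W_k^2/k^3$ via \eqref{la0}, reduce to $\gd^{-3}n^{-3/2}\max_k W_k^2$, and apply the maximal inequality of \refL{LW*}. No further comment needed.
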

\begin{proof}
Let $k:=\ceil{t}$, so  $k-1< t\le k$.
Then, by \eqref{la0},
\begin{align}\label{ky0}
0\le
 - \frac{\ddx}{\dd t} A_t =A_{k-1}-A_{k}
\le \frac{W_k^2}{k^3}
\le \frac{W_k^2}{t^3}.
\end{align}
The result \eqref{lda} follows by \refL{LW*}.
\end{proof}

We rescale and define
\begin{align}\label{hA}
\hA\nn_t:=n\qw A_{t\sqrt n}\nn,
\qquad t\ge0.
\end{align}
Recall also that $C[a,b]$ is the (Banach) space of continuous functions on $[a,b]$.

\begin{lemma}\label{LA2}
  Let $0<\gd <b<\infty$.
Then the stochastic processes $\hA\nn_t$, $n\ge1$,
are tight in $C[\gd,b]$.
\end{lemma}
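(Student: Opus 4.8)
The plan is to apply the standard Arzelà--Ascoli--type tightness criterion for $C[\gd,b]$: it suffices to show (i) tightness of $\hA\nn_\gd$ (a single real random variable) and (ii) an equicontinuity / modulus-of-continuity bound, for instance that for every $\eps>0$,
\begin{align}\label{PLAN1}
\lim_{h\to0}\limsup_{\ntoo}\P\Bigpar{\sup_{\substack{\gd\le s,t\le b\\|s-t|\le h}}\bigabs{\hA\nn_t-\hA\nn_s}>\eps}=0.
\end{align}
For (i), since $\hA\nn$ is reverse increasing (by \eqref{AAA}, after rescaling) we have $0\le\hA\nn_\gd\le \hA\nn_b$, and $\E\hA\nn_b=n\qw\E A_{b\sqrt n}\le 12/b^2$ by \refL{LA}; so the family $\{\hA\nn_\gd\}_n$ is bounded in $L^1$ and hence tight.

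The main work is (ii), and here I would simply read it off \refL{LdA}. Since $A_t$ is piecewise linear, for $\gd\sqrt n\le s_0<t_0\le b\sqrt n$ we have
\begin{align}\label{PLAN2}
\bigabs{A_{t_0}-A_{s_0}}\le (t_0-s_0)\sup_{\gd\sqrt n\le t\le n-1}\Bigabs{\ddt A_t}.
\end{align}
Rescaling via \eqref{hA}, for $\gd\le s<t\le b$ this gives
\begin{align}\label{PLAN3}
\bigabs{\hA\nn_t-\hA\nn_s}=n\qw\bigabs{A_{t\sqrt n}-A_{s\sqrt n}}\le (t-s)\cdot n\qqw\sup_{\gd\sqrt n\le u\le n-1}\Bigabs{\ddt A_u}=:(t-s)R_n,
\end{align}
where by \refL{LdA}, $\E R_n=n\qqw\cdot\E\bigsqpar{\sup\abs{\ddt A_u}}\le 96\gd^{-3}$, uniformly in $n$. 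Thus $\{R_n\}$ is bounded in $L^1$, hence $\Op(1)$, and \eqref{PLAN3} shows $\hA\nn$ is (uniformly in $n$, with high probability) Lipschitz on $[\gd,b]$ with a tight Lipschitz constant. This immediately yields \eqref{PLAN1}: the supremum there is at most $h R_n$, and $\P(hR_n>\eps)\le h\E R_n/\eps\le 96 h/(\gd^3\eps)\to0$ as $h\to0$. Together with (i), the tightness criterion for $C[\gd,b]$ gives the lemma.

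I do not expect a real obstacle here — the lemma is essentially a corollary of \refL{LdA}, whose content is exactly a uniform $L^1$ bound on the derivative of the rescaled process. The only minor point requiring a little care is the interpolation convention: $A_t$ is differentiable off the integers and we took the left derivative there, so \eqref{PLAN2} should be justified by writing $A_{t_0}-A_{s_0}=\int_{s_0}^{t_0}\ddt A_u\,\dd u$ (valid for a piecewise-linear, hence absolutely continuous, function) and bounding the integrand by its supremum; the finitely many non-differentiability points are harmless. One should also note $\hA\nn$ is indeed continuous on $[\gd,b]$ by construction (linear interpolation), so it genuinely lives in $C[\gd,b]$, and the Lipschitz bound plus $L^1$-boundedness at the left endpoint are exactly the two ingredients the standard criterion requires.
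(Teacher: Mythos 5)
Your proof is correct in substance and follows essentially the same route as the paper: the paper likewise combines the uniform $L^1$ bound on $\sup_{\gd\le t\le b}\bigabs{\frac{\ddx}{\dd t}\hA\nn_t}$ from \refL{LdA} with tightness of $\hA\nn_t$ at a fixed point from \refL{LA}, and then invokes the standard tightness criterion in $C[\gd,b]$ (Billingsley, Theorem 8.2). One small slip in your step (i): since $A_k$ is reverse \emph{increasing} (i.e.\ decreasing in $k$), the process $\hA\nn_t$ is decreasing in $t$, so $\hA\nn_\gd\ge\hA\nn_b$ and bounding $\E\hA\nn_\gd$ by $\E\hA\nn_b$ goes the wrong way; the fix is immediate — apply \refL{LA} at $t=\gd$ directly to get $\E\hA\nn_\gd\le 12\xpar{1+o(1)}\gd^{-2}$, exactly as in the paper's \eqref{lax}.
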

\begin{proof}
We have, temporarily writing $A(t):=A\nn_t$,
\begin{align}\label{laa1}
 \ddt \hA\nn_t=n\qqw A'\xpar{t\sqrt n}.
\end{align}
Hence, \refL{LdA} yields
\begin{align}\label{lda2}
  \E\lrsqpar{ \sup_{\gd \le t \le b}\lrabs{\frac{\ddx}{\dd t}\hA\nn_t}}
=\E \lrsqpar{n\qqw\sup_{\gd\sqrt n \le t \le b\sqrt n}\lrabs{\frac{\ddx}{\dd t}A_t}}
\le 
\frac{96}{\gd^3},
\end{align}
and thus the supremum in the \lhs{} forms a tight family of random variables
as $n$ varies.

Moreover, for a fixed $t\in[\gd,b]$ 
we have by \refL{LA}
\begin{align}\label{lax}
  \E\hA\nn_t =n\qw\E A\nn_{t\sqrt n}
\le n\qw\cdot 12\frac{n^2}{\floor{t\sqrt n}^2}
=12\xpar{1+o(1)}t^{-2}=O(1)
,
\end{align}and thus also the family $\hA\nn_t$ is tight.
The result follows, see
\cite[Theorem 8.2]{Billingsley}.
\end{proof}

\begin{theorem}\label{TIII}
We have
\begin{align}\label{tiii}
\sup_{0\le t \le b}\Bigabs{n\qw W_{t\sqrt n}\nn - t^2 \log\bigpar{1+\Xi\nn/t^2}}
\pto 0,
\end{align}
for every fixed $b>0$.
\end{theorem}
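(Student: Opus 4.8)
\textbf{Proof proposal for \refT{TIII}.}

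The plan is to view $n\qw W_{t\sqrt n}$ as an approximate solution of an ordinary differential equation whose exact solution is $t^2\log(1+\Xi/t^2)$. Recall from \eqref{tm1} that $W_k = M_k - A_k$. After rescaling by the factor $n\qw$ and the time substitution $k = t\sqrt n$, \refL{LM} (together with \refL{LAII}, which gives $A_{n_1}/n\pto0$) shows that $n\qw M_{t\sqrt n}$ is \whp{} uniformly close to $\Xi$ on the whole range $0\le t\sqrt n\le n_1$; in particular on $[0,b]$ (once $n$ is large, $b\sqrt n\le n_1$). So the task reduces to understanding $n\qw A_{t\sqrt n}$, and showing it converges to $g(t):=t^2-t^2\log(1+\Xi/t^2)$ — equivalently, that $n\qw W_{t\sqrt n}\to \Xi - g(t) = t^2\log(1+\Xi/t^2)$.

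First I would set up the differential relation. Write $\hW\nn_t := n\qw W_{t\sqrt n}$ and $\hA\nn_t := n\qw A_{t\sqrt n}$ as in \eqref{hA}. From the exact Doob increment \eqref{tw4b}, for $k=\ceil{t\sqrt n}$,
\begin{align}\label{pp:dA}
-\ddt \hA\nn_t = n\qqw\bigpar{A_{k-1}-A_k}
= n\qqw\cdot 2k\biggsqpar{\Bigpar{1-\tfrac1k}^{Y_k}-1+\tfrac{Y_k}{k}}.
\end{align}
Now expand: with $Y_k = W_k/(k+1)$ and $k\sim t\sqrt n$, we have $Y_k/k = W_k/(k(k+1)) \sim \hW\nn_t/t^2$, which is $O_p(1)$ by \refL{LW*}; and $(1-1/k)^{Y_k} = \exp\bigpar{Y_k\log(1-1/k)} = \exp\bigpar{-Y_k/k + O(Y_k/k^2)}$. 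Since $Y_k/k^2 = O_p(n\qqw)\to0$ uniformly on $[\gd,b]$ (again by \refL{LW*}), the bracket in \eqref{pp:dA} is $e^{-Y_k/k}-1+Y_k/k + o_p(n\qqw)$, and multiplying by $2k\cdot n\qqw \sim 2t$ gives
\begin{align}\label{pp:ode}
-\ddt \hA\nn_t = 2t\Bigpar{e^{-\hW\nn_t/t^2}-1+\hW\nn_t/t^2} + o_p(1),
\end{align}
uniformly on $[\gd,b]$. Using $\hW\nn_t = \Xi + o_p(1) - \hA\nn_t$ (from \refL{LM}), this is an approximate autonomous ODE for $\hA\nn_t$ (with parameter $\Xi$) driven by an $o_p(1)$ error that is, by \refL{LA2} and \eqref{pp:dA}, controlled in $C[\gd,b]$.

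The main step is then a Gronwall-type stability argument: tightness of $\hA\nn$ in $C[\gd,b]$ (\refL{LA2}) plus \eqref{pp:ode} identifies every subsequential limit as a solution of the limiting ODE $-h'(t) = 2t\bigpar{e^{-(\Xi-h(t))/t^2} - 1 + (\Xi-h(t))/t^2}$, i.e.\ writing $w(t) = \Xi - h(t) = \lim \hW\nn_t$, we get $w'(t) = 2t\bigpar{e^{-w/t^2}-1+w/t^2}$. One checks directly that $w(t) = t^2\log(1+\Xi/t^2)$ solves this (differentiate: $w' = 2t\log(1+\Xi/t^2) + t^2\cdot\frac{-\Xi/t^3}{1+\Xi/t^2}\cdot 2 $... let me just verify the structure: with $u = w/t^2 = \log(1+\Xi/t^2)$ we have $e^{-u} = 1/(1+\Xi/t^2)$, so $e^{-u}-1+u = -\frac{\Xi/t^2}{1+\Xi/t^2} + \log(1+\Xi/t^2)$, and $2t$ times this should match $w'$ — this is the routine check I will carry out in full), and that this ODE has a unique solution given an initial/boundary value, so the limit is deterministic in terms of $\Xi$. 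For the boundary value I would use that as $t\downarrow\gd$ with $\gd$ small, \refL{LA} gives $\E\hA\nn_t = O(t^{-2})$ uniformly, which combined with the monotonicity \eqref{AAA} and \refL{LW*} pins down the correct branch; alternatively, run the comparison from the large-$t$ end where $\hA\nn_b$ is controlled and the ODE is non-stiff. Finally, to pass from $[\gd,b]$ to $[0,b]$, I note that $t^2\log(1+\Xi/t^2)\to\Xi$ as $t\to0$, that $\hW\nn_t\le \hM\nn_t \to \Xi$ uniformly by \refL{LM}, and that $\hW\nn_t$ is reverse-supermartingale-like and nonnegative, so the small-$t$ contribution is squeezed; a short argument with \refL{LAII} (giving $\hA\nn_t = o_p(1)$ only for $t\sqrt n\ge n_2$, hence for $t\ge\gd$ with $\gd$ fixed this is automatic, but near $0$ we instead use $0\le \hW\nn_t\le \hM\nn_t$ and $|{\hM\nn_t-\Xi}|\pto0$ from \refL{LM}) closes the gap after letting $\gd\downarrow0$.

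The hard part will be the uniform control of the error terms in \eqref{pp:ode} — specifically, justifying that the $O(Y_k/k^2)$ correction inside the exponential and the discretization error (replacing the difference $A_{k-1}-A_k$ by a derivative, and $k$ by $t\sqrt n$) are $o_p(1)$ \emph{uniformly} on $[\gd,b]$, not just pointwise. This is where \refL{LW*}'s maximal bound $\E\max_k W_k^2 \le 96n^2$ is essential: it gives a single $O_p(1)$ bound on $\sup_{t\in[\gd,b]}\hW\nn_t$, hence on $\sup_t Y_k/k = O_p(n\qqw \cdot \hW\nn_t/t) $ wait — on $\sup_t Y_k/k^2$, turning all the $k$-dependent correction terms into uniformly small quantities. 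Combined with the tightness/equicontinuity from \refL{LA2}, this upgrades pointwise convergence of $\hA\nn_t$ to uniform convergence and lets the ODE-uniqueness argument go through.
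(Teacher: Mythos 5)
Your strategy is the paper's own: tightness of $\hA\nn$ in $C[\gd,b]$ (\refL{LA2}), passage to a subsequential limit, identification of that limit through the ODE $w'(t)=2t\bigpar{e^{-w/t^2}-1+w/t^2}$ obtained from \eqref{tw4b}, verification that $t^2\log(1+\xi/t^2)$ solves it, and a separate argument near $t=0$; your error analysis for the discretization and the $(1-1/k)^{Y_k}$ expansion is sound. The genuine gap is in selecting the correct solution of the ODE. The general solution is the one-parameter family $w(t)=t^2\log(1+C/t^2)$, and \emph{every} member of it satisfies $w(t)\to0$ as $t\downto0$ (the members differ only at order $t^2$ there), so the small-$t$ end carries no information about $C$; moreover the bound $\E\hA\nn_t=O(t^{-2})$ from \refL{LA} is vacuous for small $t$. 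Your primary proposal therefore cannot pin down the branch. The constant must be identified at the \emph{large}-$t$ end: the limit $\cA(t)=\xi-t^2\log(1+C/t^2)$ tends to $\xi-C$ as $t\to\infty$, while \refL{LA} and Fatou give $\E\cA(t)\le12/t^2\to0$, forcing $C=\xi$. Your fallback ("run the comparison from the large-$t$ end where $\hA\nn_b$ is controlled") gestures at this but is not enough as stated: at a fixed $b$ you know only $\E\hA\nn_b\le12/b^2$, not the value of $\hA\nn_b$, so the argument must let $t\to\infty$ in the limit function rather than anchor at a finite endpoint.

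Two smaller points. First, to make the limiting ODE meaningful pathwise you need the convergences $\Xi\nn\to\xi$, $\sup_t|n\qw M\nn_{t\sqrt n}-\Xi\nn|\to0$ and $\hA\nn\to\cA$ to hold jointly and almost surely along the subsequence; the paper arranges this with the Skorohod coupling theorem, and some such device is needed before "identifying every subsequential limit as a solution". Second, for the extension to $[0,\gd]$, nonnegativity of $W$ plus $n\qw M\nn_t\to\xi$ only squeezes $n\qw W\nn_{t\sqrt n}$ from above; to match the target, which tends to $\Xi\nn$ as $t\to0$, you need the monotonicity \eqref{AAA} in both directions, namely $\hA\nn_0\le n\qw M\nn_0\to\xi=\cA(0)$ for the upper bound and $\hA\nn_0\ge\hA\nn_t\to\cA(t)\upto\cA(0)$ for the lower bound, after which monotonicity of $\hA\nn_t$ in $t$ and continuity of $\cA$ upgrade pointwise to uniform convergence.
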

\begin{remark}
  We may note that \eqref{tiii} means convergence, in probability, in the
space $C\ooo$ with its standard topology (uniform convergence on compact sets).
Equivalently, we may consider the step functions 
$n\qw W_{\floor{t\sqrt n}}\nn$ and convergence in $D\ooo$.
\end{remark}

\begin{proof}
We divide the proof into several steps.

\stepx{A subsequence.}
  By \refL{LA2} and Prohorov's theorem \cite[Theorem 6.1]{Billingsley},
for every compact interval $[\gd,b]\subset\xooo$ we can find 
a subsequence $(n_\nu)$  such that, along the subsequence,
\begin{align}
  \label{kb1}
\hA\nn_t\dto \cA_{\gd,b}(t)
\qquad\text{in $C[\gd,b]$}
\end{align}
for some  continuous random function $\cA_{\gd,b}(t)$ on $[\gd,b]$.
Furthermore,
it suffices to consider a countable set of such intervals, for example 
$\cI:=\set{[m\qw,m]$, $m\ge2}$, and
by considering convergence in the product space
$\prod_{[\gd,b]\in\cI}C[\gd,b]$
we can find a subsequence such that \eqref{kb1} holds 
jointly for all compact intervals $[\gd,b]\in\cI$;
by adding a factor $\bbR^2$, we may also assume that this holds jointly with
\eqref{lxi} and \eqref{lm2}.
We consider until the last step of the proof only this subsequence.

\stepx{A coupling.}
By the Skorohod coupling theorem \cite[Theorem~4.30]{Kallenberg},
we may couple $D_n$ for different $n$ such that the convergence in
\eqref{kb1} holds \as{} for every $[\gd,b]\in\cI$,
and also \eqref{lxi} and \eqref{lm2}
hold a.s.
Since convergence in $C[\gd,b]$ means uniform convergence, this means that
\as{} $\hA\nn_t\to \cA_{\gd,b}(t)$ uniformly on $[\gd,b]$ for each
$[\gd,b]\in\cI$. 
It is evident that \as{}
the different limits $\cA_{\gd,b}(t)$ have to agree whenever intervals overlap,
and thus
there exists a continuous random function $\cA(t)$ defined on $\xooo$ such
that \as{} 
\begin{align}\label{kaa}
  \hA\nn_t\to\cA(t)
\end{align}
uniformly on  each compact interval $[\gd,b]\subset\xooo$.
(In other words, $\hA\nn_t\asto\cA(t)$ in the space $C\xooo$.)
Clearly, $\cA(t)\ge0$.
Furthermore, we now  \as{} have
\begin{gather}\label{kb2}
\Xi\nn\to\xi
\in\gG(2),
\\\label{kb3}
\sup_{0\le t\le n_1}\bigabs{n\qw M\nn_t -\Xi\nn}\to 0.
\end{gather}

It follows from \eqref{kaa}--\eqref{kb3} that \as,
\begin{align}\label{kb4}
  n\qw W_{t\sqrt n}\nn
=  n\qw M_{t\sqrt n}\nn - n\qw A_{t\sqrt n}\nn
=  n\qw M_{t\sqrt n}\nn - \hA\nn_t
\to \xi-\cA(t),
\end{align}
uniformly on each compact interval in $\xooo$.

\stepx{Identifying the limit.}
Since the limit in \eqref{kb4} is continuous, \eqref{kb4} and \eqref{tw3} yield,
again \as{} uniformly on each compact interval in $\xooo$,
\begin{align}\label{kb5}
  \frac{ Y_{\ceil{t\sqrt n}}\nn}{\ceil{t\sqrt n}}
=   \frac{ W_{\ceil{t\sqrt n}}\nn}{(t^2+o(1))n} 
\to
\cB(t):=
  t\qww\bigpar{ \xi-\cA(t)}.
\end{align}
By \eqref{laa1} and \eqref{tw4b},
with $k:=\ceil{t\sqrt n}$,
\begin{align}\label{kb6a}
  \ddt \hA\nn_t
=n\qqw\bigpar{A_k-A_{k-1}}
= -n\qqw\cdot 2k\Bigpar{\Bigpar{1-\frac1k}^{Y_k}-1+\frac{Y_k}{k}},
\end{align}
and thus \eqref{kb5} implies that a.s., uniformly on each compact interval in
$\xooo$,
\begin{align}\label{kb6}
  \ddt \hA\nn_t
\to - 2t\bigpar{e^{-\cB(t)}-1+\cB(t)}.
\end{align}
It follows from \eqref{kaa} and \eqref{kb6} that a.s., if $0<t_1<t_2<\infty$,
\begin{align}\label{kb7}
\cA(t_2)-\cA(t_1)=\lim_\ntoo\bigpar{\hA\nn_{t_2}-\hA\nn_{t_1}}
=\int_{t_1}^{t_2}\Bigpar{- 2t\bigpar{e^{-\cB(t)}-1+\cB(t)}}\dd t
.\end{align}
Consequently, a.s.\ the random function $\cA(t)$ is continuously
differentiable on $\xooo$, with derivative
\begin{align}\label{kb8}
\cA'(t)=- 2t\bigpar{e^{-\cB(t)}-1+\cB(t)},
\qquad 0<t<\infty
.\end{align}
This and the definition of $\cB(t)$
in \eqref{kb5} yield a differential equation for
$\cA(t)$, which we solve as follows.
First, let
\begin{align}\label{q1}
  \cC(t):=t^2\cB(t)=\xi-\cA(t).
\end{align}
Then
\begin{align}\label{q2}
 2t\cB(t)+t^2 \cB'(t) =\cC'(t)=-\cA'(t)=2t\bigpar{e^{-\cB(t)}-1+\cB(t)}
\end{align}
and thus
\begin{align}\label{q3}
 \cB'(t) =\frac{2}{t}\bigpar{e^{-\cB(t)}-1}
\end{align}
which can be written as
\begin{align}\label{q4}
  \frac{e^{\cB(t)}\dd\cB(t)}{e^{\cB(t)}-1}=-\frac{2\dd t}{t}
\end{align}
with the solution, for some $c\in\bbR$,
\begin{align}\label{q5}
  \log\bigpar{e^{\cB(t)}-1}= c-2\log t
\end{align}
and thus, with $C:=e^{c}>0$,
\begin{align}\label{q6}
  \cB(t)=\log\bigpar{1+C/t^2},
\qquad t>0.
\end{align}
Note that the constants $c$ and $C$ may be random.

We have shown that \eqref{q6} holds \as, for some random $C$, and
\eqref{kb5} then yields
\begin{align}\label{q7}
  \cA(t)=\xi-t^2\cB(t)
=\xi-t^2 \log\bigpar{1+C/t^2},
\qquad t>0.
\end{align}
It follows that a.s.
\begin{align}\label{q8}
  \cA(t)\to \xi-C
\qquad\text{as \ttoo}.
\end{align}
On the other hand, for every fixed $t>0$, as in \eqref{lax},
\begin{align}\label{laxa}
  \E\hA\nn_t 
\le 12 \frac{n}{\floor{t\sqrt n}^2}
\to \frac{12}{t^2},
\end{align}
which by \eqref{kaa} and Fatou's lemma implies
\begin{align}\label{laxb}
  \E\cA(t) \le \frac{12}{t^2},
\qquad t>0.
\end{align}
In particular, $\cA(t)\pto0$ as \ttoo, which together with \eqref{q8}
yields $C=\xi$.

We thus have shown that \eqref{kaa}, \eqref{kb4} and \eqref{kb5} \as{} hold
uniformly on each compact interval in $\xooo$, with
\begin{align}\label{qa}
\cA(t)=\xi-t^2 \log\bigpar{1+\xi/t^2}.
\end{align}

\stepx{Convergence on $\ooo$.}
We extend the results just shown from $\xooo$ to $\ooo$ as follows.
First, note that $\cA(t)$ in \eqref{qa} extends to a continuous function on
$\ooo$, with $\cA(0)=\xi$.
We have $A\nn_0\le M\nn_0$ by \eqref{tm1},
and $n\qw M\nn_0\to \xi$ \as{}
by \eqref{kb2} and \eqref{kb3}.
Hence, a.s.,
\begin{align}\label{qb}
  \limsup_\ntoo n\qw A\nn_0
\le   \limsup_\ntoo n\qw M\nn_0
=
\xi = \cA(0).
\end{align}
On the other hand, 
for every $t>0$, 
$A\nn_{t\sqrt n}\le A\nn_0$ and thus
by \eqref{kaa}, a.s.
\begin{align}\label{qc}
  \liminf_\ntoo n\qw A\nn_0
\ge   \liminf_\ntoo n\qw A\nn_{t\sqrt n}
=\cA(t).
\end{align}
Letting $t\downto 0$ yields $\cA(t)\upto\cA(0)$ and thus
\begin{align}\label{qd}
  \liminf_\ntoo n\qw A\nn_0
\ge \cA(0).
\end{align}
Consequently, a.s., $\hA\nn_t\to\cA(t)$ for $t=0$ too.
We thus have \eqref{kaa} \as{} for each fixed $t\ge0$.
Since $\hA_t\nn$ and $\cA(t)$ are decreasing in $t$, and $\cA(t)$ is
continuous, this implies uniform convergence on each
compact interval $[0,b]\subset\ooo$.
It follows from \eqref{kb2}--\eqref{kb3} that \eqref{kb4} also holds \as{}
on each compact interval in $\ooo$, \ie, in $C\ooo$.
This means, by \eqref{qa},
\as{} uniformly on each compact interval,
\begin{align}\label{qe}
  n\qw W_{t\sqrt n}\nn
\to t^2 \log\bigpar{1+\xi/t^2}.
\end{align}
By \eqref{kb2} and the fact that $\frac{\ddx}{\dd x}\log(1+x)\le1$,
\eqref{qe} yields also
\begin{align}\label{qf}
  n\qw W_{t\sqrt n}\nn - t^2 \log\bigpar{1+\Xi\nn/t^2}
\to 0,
\end{align}
\as{} uniformly  on each compact interval in $\ooo$.

\stepx{Uncoupling.}
The \as{} convergence in \eqref{qf} depends on the chosen coupling of $D_n$ for
different $n$, but this yields \eqref{qf}
with convergence in probability in general, \ie, \eqref{tiii}.

\stepx{Conclusion.}
We have so far proved \eqref{tiii}
only for a subsequence, but the same proof shows that every subsequence has
a subsubsequence such that \eqref{tiii} holds,
which as is well known implies that \eqref{tiii} holds for the full sequence,
see \eg{} \cite[Section 5.7]{Gut}.
\end{proof}

\section{The number of descendants}\label{Spf1}

Recall that the random variable $X=X\nn$ is the number of 
descendants of $n$, \ie{} red vertices, and thus,
counting the root $n$ separately,
\begin{align}\label{r1}
  X=1+\sum_{k=1}^{n-1}J_k.
\end{align}
We make a Doob decomposition similar to \eqref{tm1}; 
in this case it takes the form,
since $J_k$ is $\cF_{k-1}$-measurable,
\begin{align}\label{r2}
  X=1+L_0+B_0,
\end{align}
where
\begin{align}\label{r3}
  L_k:=\sum_{i=k+1}^{n-1}\bigpar{J_i-\E(J_i\mid \cF_i)}
\end{align}
so that $(L_k)_0^{n-1}$ is a reverse martingale with $L_{n-1}=0$: 
$\E\xpar{L_{k-1}\mid \cF_k}=L_k$, 
and,
using \eqref{jk} and \eqref{zk},
\begin{align}\label{r4}
  B_k:=\sum_{i=k+1}^{n-1}\E(J_i\mid \cF_i)
=\sum_{i=k+1}^{n-1}\P(Z_i\ge1\mid \cF_i)
  =\sum_{i=k+1}^{n-1}\bigpar{1-(1-\tfrac1i)^{Y_i}}
\end{align}
is positive and increasing backwards:
\begin{align}\label{BBB}
0=B_{n-1}\le\dots\le B_1\le B_0.  
\end{align}

By \eqref{r4} and \refL{L2},
for every $k\le n-1$,
\begin{align}\label{r5}
\E B_k 
=  \sum_{i=k+1}^{n-1}\E J_i 
=  \sum_{i=k+1}^{n-1}\P\bigpar{Z_i\ge1}
\le \sum_{i=k+1}^{\infty}\frac{2n}{i^2}
\le \frac{2n}{k}.
\end{align}
This is too coarse for small $k$; 
however, since $0\le J_i\le 1$ for every $i$,
we also have $B_0-B_\ell\le \ell$ for every $\ell\le n-1$.
Hence, \eqref{r5} implies
\begin{align}\label{r6}
\E B_0 
\le \E B_{\ceil{\sqrt n}}+\ceil{\sqrt n}
\le 4\sqrt n.
\end{align}

Since $(J_i\mid \cF_i)$ is a Bernoulli variable,
$\Var\bigpar{J_{i}\mid\cF_i}\le \E\bigpar{J_{i}\mid\cF_i}$, and thus
the (reverse) martingale property of $(L_k)$ yields
\begin{align}\label{r7}
\E L_0^2 &
=\sum_{i=1}^{n-1}\E\bigsqpar{\Var\bigpar{J_{i}\mid\cF_i}}
\le \sum_{i=1}^{n-1}\E\bigsqpar{\E\bigpar{J_{i}\mid\cF_i}}
= \sum_{i=1}^{n-1}\E J_{i} 
=\E B_0
\le 4{\sqrt n}.
\end{align}
In particular, $L_0/\sqrt n \pto0$, which will show that $L_0$ is negligible
in \eqref{r2}.

\begin{lemma}\label{LX}
As \ntoo,
  \begin{align}\label{lx1}
\biggabs{\frac{X\nn}{\sqrt n} -\frac{\pi}2\sqrt{\Xi\nn}}
\pto 0.
  \end{align}
Thus,
\begin{align}\label{lx2}
  \frac{X\nn}{\sqrt n} \dto \frac{\pi}2\sqrt{\xi},
\end{align}
with $\xi\in\gG(2)$.
\end{lemma}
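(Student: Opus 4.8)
The plan is to combine the decomposition \eqref{r2} with the uniform convergence established in \refT{TIII} and the negligibility of the martingale term $L_0$. First I would observe that by \eqref{r2} and the bound $\E L_0^2\le 4\sqrt n$ from \eqref{r7}, we have $X\nn/\sqrt n = \bigpar{1+B_0+L_0}/\sqrt n = B_0/\sqrt n + \op(1)$, so it suffices to show $B_0/\sqrt n - \frac{\pi}2\sqrt{\Xi\nn}\pto0$. Recall from \eqref{r4} that $B_0=\sum_{i=1}^{n-1}\bigpar{1-(1-\tfrac1i)^{Y_i}}$; the summand is $\P(Z_i\ge1\mid\cF_i)$ and, since $Y_i/i = W_i/\bigpar{(i+1)i}$, it is well approximated by $1-e^{-Y_i/i}$. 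So I would rewrite $B_0/\sqrt n$ as a Riemann sum: with $t=i/\sqrt n$ and $\dd t = 1/\sqrt n$,
\begin{align*}
  \frac{B_0}{\sqrt n}
  \approx \frac1{\sqrt n}\sum_{i=1}^{n-1}\Bigpar{1-e^{-Y_i/i}}
  \approx \int_0^{\sqrt n}\Bigpar{1-\exp\Bigpar{-\tfrac{W_{t\sqrt n}\nn}{t^2 n}}}\dd t.
\end{align*}

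Next I would feed in \refT{TIII}: \as{} (after passing to the Skorohod coupling, or directly in probability on the level of the integrand) $n\qw W_{t\sqrt n}\nn\to t^2\log\bigpar{1+\xi/t^2}$ uniformly on compacts, so $W_{t\sqrt n}\nn/(t^2 n)\to \log\bigpar{1+\xi/t^2}$ and hence the integrand converges to $1-\exp\bigpar{-\log(1+\xi/t^2)} = 1-\tfrac1{1+\xi/t^2} = \tfrac{\xi/t^2}{1+\xi/t^2} = \tfrac{\xi}{t^2+\xi}$. Then
\begin{align*}
  \int_0^\infty \frac{\xi}{t^2+\xi}\dd t
  = \sqrt\xi\int_0^\infty\frac{\dd u}{u^2+1}
  = \frac{\pi}{2}\sqrt\xi,
\end{align*}
by the substitution $t=\sqrt\xi\,u$. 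Replacing $\xi$ by $\Xi\nn$ (legitimate since $\Xi\nn\pto\xi$ by \refL{LXi} and $x\mapsto\frac{\pi}2\sqrt x$ is continuous) gives the claimed centring $\frac{\pi}2\sqrt{\Xi\nn}$, and \eqref{lx2} follows from \eqref{lx1} and \refL{LXi} by the continuous mapping theorem and the converging-together lemma.

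The main obstacle is controlling the tail of the integral, i.e.\ the contribution of large $t$ (equivalently $i$ of order between $\sqrt n$ and $n$), uniformly in $n$, since \refT{TIII} only gives control on fixed compact intervals $[0,b]$. For the true sum this is handled by \eqref{r5}: $\E\bigpar{B_0-B_{\ceil{b\sqrt n}}}\le 2n/(b\sqrt n)=2\sqrt n/b$, so the tail of $B_0/\sqrt n$ is $\Op(1/b)$, uniformly in $n$; and for the limiting integral, $\int_b^\infty\frac{\xi}{t^2+\xi}\dd t\le \xi/b\to0$ as $b\to\infty$. A standard $\eps$-$b$ argument (choose $b$ large to kill both tails, then apply \refT{TIII} on $[0,b]$) then closes the gap. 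A secondary, routine point is justifying the replacement of $1-(1-\tfrac1i)^{Y_i}$ by $1-e^{-Y_i/i}$ and of the sum by the integral; both errors are easily bounded using $0\le(1-\tfrac1i)^{Y_i}-e^{-Y_i/i}\le e^{-Y_i/i}Y_i/i^2$ and the moment bound $\E W_i^2\le 24n^2$ of \eqref{tw9}, together with uniform continuity of $t\mapsto n\qw W_{t\sqrt n}\nn$ near the relevant scale. One should also note the small-$t$ region needs no special care because the integrand is bounded by $1$ and the interval $[0,b]$ has finite length, so there is no singularity at $0$.
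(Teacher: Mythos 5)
Your proposal is correct and follows essentially the same route as the paper: decompose $X\nn$ via \eqref{r2}, discard $L_0$ using \eqref{r7}, recognise $B_0/\sqrt n$ as a Riemann sum of $\E(J_k\mid\cF_k)$ converging (via the Phase~III result) to $\int_0^\infty\frac{\Xi\nn}{\Xi\nn+t^2}\dd t=\frac{\pi}2\sqrt{\Xi\nn}$, with the tails at $t\to\infty$ and $t\to0$ controlled exactly as you describe by \eqref{r5} and the trivial bound; the paper merely phrases the Riemann sum as the integral of the derivative of the linearly interpolated $\hB\nn_t$. The only blemish is the reversed inequality sign in your bound $0\le(1-\tfrac1i)^{Y_i}-e^{-Y_i/i}$ (in fact $(1-\tfrac1i)^{Y_i}\le e^{-Y_i/i}$), but the magnitude estimate you use is the right one and the argument is unaffected.
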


\begin{proof}
For convenience, we use the Skorohod coupling theorem as in the proof of
\refT{TIII}; 
we may thus assume that all \as{} convergence results in the proof of
\refT{TIII} hold.
(We may for simplicity  consider the same subsequence as in the proof of
\refT{TIII}, and then draw the conclusion for the full sequence as there;
alternatively, we may
argue that now when \refT{TIII} is proved, we may consider
the full sequence when we apply the Skorohod coupling theorem.)
In particular, \eqref{kb5} and \eqref{qa} (or \eqref{qe}) yield
\begin{align}\label{rb5}
  \frac{ Y_{\ceil{t\sqrt n}}\nn}{\ceil{t\sqrt n}}
=   \frac{ W_{\ceil{t\sqrt n}}\nn}{(t^2+o(1))n} 
\to
\cB(t)=
\log\bigpar{1+\xi/t^2}
\end{align}
\as{} uniformly on each compact interval in $\xooo$.

We extend $B_k$ to real arguments by linear interpolation and define also,
similarly to \eqref{hA} but with a different scaling, 
  \begin{align}\label{vs1}
    \hB\nn_t:=n\qqw B\nn_{t\sqrt n}.
  \end{align}
Then, with $k:=\ceil{t\sqrt n}$,
\begin{align}\label{vs2}
  \ddt\hB\nn_t = -\E \bigpar{J_k\mid\cF_k}
= \Bigpar{1-\frac{1}{k}}^{Y_k}-1
\end{align}
and thus it follows from \eqref{rb5} that, uniformly on each compact
interval in $\xooo$,
\begin{align}\label{vs3}
  \ddt\hB\nn_t \to e^{-\cB(t)}-1
= \frac{1}{1+\xi/t^2}-1
= -\frac{\xi}{\xi+t^2}.
\end{align}
Consequently, if $0<t_1<t_2<\infty$, \as{}
\begin{align}\label{vs4}
  \hB\nn_{t_1}-\hB\nn_{t_2}=-\int_{t_1}^{t_2}\ddt\hB\nn_t\dd t
\to \int_{t_1}^{t_2}\frac{\xi}{\xi+t^2}\dd t
= \sqrt{\xi}\Bigpar{\arctan\frac{t_2}{\sqrt\xi} -\arctan\frac{t_1}{\sqrt\xi}}
.\end{align}

Since 
\eqref{vs2} implies $\bigabs{\ddt\hB\nn_t}\le1$,
we have
\begin{align}\label{vs5}
  \bigabs{\hB\nn_0-\hB\nn_{t_1}}\le t_1.
\end{align}
Furthermore, \eqref{r5} implies, for $t_2\ge1$,
\begin{align}\label{vs6}
  \E\hB\nn_{t_2}=n\qqw\E B_{t_2\sqrt n} \le \frac{2\sqrt n}{\floor{t_2\sqrt n}}
\le\frac{4}{t_2}.
\end{align}
Thus, letting $t_1\to0$ and $t_2\to\infty$, we have
$\hB\nn_0-(\hB\nn_{t_1}-\hB\nn_{t_2})\pto0$, uniformly in $n$,
and it follows from \eqref{vs4} by standard arguments that
\begin{align}\label{vs7}
  \hB\nn_0\pto
\int_{0}^{\infty}\frac{\xi}{\xi+t^2}\dd t
=\frac{\pi}2\sqrt\xi.
\end{align}
Recall from \eqref{vs1} that $B_0=\sqrt n \hB\nn_0$.
The results \eqref{lx1}--\eqref{lx2} now follow from 
\eqref{vs7} by
\eqref{r2}, \eqref{r7}, and \eqref{kb2}.
\end{proof}

\begin{proof}[Proof of \refT{TX}, first part]
  The limit in distribution \eqref{tx1} follows immediately from \eqref{lx2},
using the well known facts that
$\chi_4^2\in\chi^2(4)$ and thus $\frac12\chi_4^2\in\gG(2)$,
see \refSS{SSnot},
and consequently
\begin{align}\label{semla}
  \sqrt\xi\eqd 2\qqw\chi_4.
\end{align}
\end{proof}

\section{Higher moments}\label{Smom}
In this section we prove some inequalities for higher moments.
We do not care about exact constants, and we use the convention that
$c_p$ stands for constants that may (and  will) depend on the
parameter $p$, but not on $n$; the value of $c_p$ may change from one
occurrence to another.

We consider first the reverse martingale $M_k$. We define
the maximal function
\begin{align}\label{mx}
  \Mx:=\max_{n-1\ge k\ge 0} M_k,
\end{align}
the martingale differences, for $n-1\ge k\ge 1$, 
recalling \eqref{tm2}, \eqref{tw3} and \eqref{tw1},
\begin{align}\label{gdm}
  \gD M_k&:= M_{k-1}-M_k 
=W_{k-1}-\E\bigpar{W_{k-1}\mid\cF_k}
\notag\\&\phantom:
=k \bigpar{Y_{k-1}-\E\bigpar{Y_{k-1}\mid\cF_k}}
\notag\\&\phantom:
=-k \bigpar{Z_k-\E\bigpar{Z_k\mid\cF_k}}
+2k \bigpar{J_k-\E\bigpar{J_k\mid\cF_k}}
,\end{align}
and the
conditional square function 
\begin{align}\label{ssm}
 s(M):=
\lrpar{\sum_{i=1}^{n-1}\E\bigpar{\xpar{\gD M_i}^2\mid\cF_i}}\qq.
\end{align}
We use one of Burkholder's martingale inequalities 
\cite[Theorem 21.1]{Burkholder1973},
\cite[Corollary 10.9.1]{Gut}
on the martingale $M_k-M_{n-1}=M_k-2n$,
which yields
\begin{align}\label{burk}
  \E (\Mx)^p&
\le c_p(2n)^p + c_p\E \bigpar{\max_k|M_k-2n|}^p
\notag\\&
\le c_p n^p+c_p \E s(M)^p + c_p \E \bigpar{\max_k|\gD M_k|}^p
\notag\\&
\le c_pn^p+c_p \E s(M)^p + c_p \sum_{k=1}^{n-1}\E |\gD M_k|^p
.\end{align}
(This is valid for any $p>0$, although we only use $p\ge2$.)

\begin{lemma}\label{Lp}
  For every $p>0$,
\begin{align}\label{lp}
 \E \xpar{\Mx}^p  \le c_p n^p.  
\end{align}
\end{lemma}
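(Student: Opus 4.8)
The plan is to establish \eqref{lp} by induction on $p$, using the Burkholder inequality \eqref{burk} to bound $\E(\Mx)^p$ in terms of three pieces: a trivial term $c_pn^p$, the $p$-th moment of the conditional square function $s(M)$, and the sum $\sum_k\E|\gD M_k|^p$. The base of the induction is essentially \refL{LW*}, which already gives the case $p=2$ (indeed $\E(\Mx)^2\le96n^2$); more generally, for $0<p\le2$ the bound \eqref{lp} follows from the case $p=2$ by Jensen's (or Lyapunov's) inequality. So assume $p\ge2$ and suppose, as an induction hypothesis, that $\E(\Mx)^q\le c_qn^q$ for all $q<p$ (it will be enough to have this for $q=p/2$ and for a few small exponents).

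For the term $\sum_{k=1}^{n-1}\E|\gD M_k|^p$, I would start from the formula \eqref{gdm}, which writes $\gD M_k$ as $-k(Z_k-\E(Z_k\mid\cF_k))+2k(J_k-\E(J_k\mid\cF_k))$. Since $0\le J_k\le1$, the second term is bounded by $2k$ in absolute value; for the first term, $|Z_k|\le Y_k$ and $\E(Z_k\mid\cF_k)=Y_k/k\le Y_k$, so $|\gD M_k|\le 2kY_k+2k = 2W_k + 2k \le 4W_k$ (using $k\le W_k$ for $k\le n-1$, since $W_k=(k+1)Y_k\ge$ ... hmm, actually $Y_k$ could be $0$, so more safely $|\gD M_k|\le 2W_k+2k\le 2\Mx+2n$ on the relevant event, or simply $|\gD M_k|\le c\,\Mx$ is false when $Y_k=0$; instead use $|\gD M_k|\le 2kY_k+2k\indic{J_k=1}+ (\text{conditional means})$, and note $J_k=1\implies Z_k\ge1\implies Y_k\ge1\implies W_k\ge k+1$). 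In any case one gets a crude bound $|\gD M_k|\le c\,W_{k-1}+c\,W_k$, so $\E|\gD M_k|^p\le c_p\E(\Mx)^p$ — which is circular. The right move is more delicate: condition on $\cF_k$ and use that, given $\cF_k$, $Z_k\in\Bin(Y_k,1/k)$, so the centered variable $Z_k-\E(Z_k\mid\cF_k)$ has $p$-th conditional moment $\le c_p\bigpar{(Y_k/k)+(Y_k/k)^{p}}\le c_p Y_k/k$ when $Y_k/k\le 1$ (which holds since $Y_k\le W_k\le\Mx$ and we may restrict to $k$ large; for small $k$ the contribution is negligible after summing). Hence $\E(|\gD M_k|^p\mid\cF_k)\le c_p k^p (Y_k/k)\cdot(\text{something})$; being careful, $\E(|Z_k-\E Z_k|^p\mid\cF_k)\le c_p(Y_k/k)$ when $Y_k\le k$ and $\le c_p(Y_k/k)^{p/2}\cdot$stuff otherwise, and the $J_k$ part contributes $\le c_p\P(J_k=1\mid\cF_k)\le c_p Y_k/k$ too. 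Multiplying by $k^p$ and taking expectations, $\E|\gD M_k|^p\le c_p k^{p-1}\E Y_k\, + (\text{tail})$; using $\E Y_k\le \E W_k/k\le 2n/k$ from \eqref{tm4}, this is $\le c_p k^{p-2}n$, and $\sum_{k=1}^{n-1}k^{p-2}n\le c_p n\cdot n^{p-1}=c_pn^p$. The tail where $Y_k>k$ is controlled by the induction hypothesis applied to a lower power of $\Mx$ together with a Markov-type estimate, and contributes $o(n^p)$ or $\le c_pn^p$.

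For the conditional square function, \eqref{ssm} and \eqref{tw7} give $s(M)^2=\sum_{i=1}^{n-1}\E((\gD M_i)^2\mid\cF_i)=\sum_{i=1}^{n-1}\Var(W_{i-1}\mid\cF_i)\le 10\sum_{i=1}^{n-1}W_i\le 10\,n\,\Mx$ (crudely bounding each $W_i\le\Mx$ and there being at most $n$ terms). Therefore $s(M)^p\le c_p n^{p/2}(\Mx)^{p/2}$, and by Cauchy--Schwarz (or Young's inequality) $\E s(M)^p\le c_pn^{p/2}\E(\Mx)^{p/2}\le c_pn^{p/2}\cdot c_{p/2}n^{p/2}=c_pn^p$ by the induction hypothesis. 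Plugging these three estimates into \eqref{burk} yields $\E(\Mx)^p\le c_pn^p$, completing the induction. \textbf{The main obstacle} is the handling of $\sum_k\E|\gD M_k|^p$: the naive bound is circular, and one must genuinely exploit the binomial structure of $Z_k$ given $\cF_k$ — splitting into the ``typical'' regime $Y_k\lesssim k$ (where the conditional $p$-th moment of the centered binomial is of order $Y_k/k$, not $(Y_k/k)^{p/2}$) and a negligible large-$Y_k$ tail absorbed via the induction hypothesis for lower powers — so that after multiplying by $k^p$ and summing one recovers $c_pn^p$ rather than something of order $n^p\cdot\E(\Mx)^{p-2}$. Everything else is routine once the induction is set up and \refLs{LW*} and \ref{L2} are invoked.
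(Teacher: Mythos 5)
Your proposal is correct and follows essentially the same route as the paper: Burkholder's inequality \eqref{burk}, the bound $s(M)^2\le 10\,n\,\Mx$ combined with the induction hypothesis at exponent $p/2$, and the centered binomial moment bound for $Z_k$ given $\cF_k$. The only cosmetic difference is that the paper avoids your case split on $Y_k\lessgtr k$ by using $\E\bigpar{|Z_k-\E(Z_k\mid\cF_k)|^p\mid\cF_k}\le c_p(Y_k/k)^{p/2}+c_p(Y_k/k)$ in both regimes at once, bounding $(Y_k/k)^{p/2}\le(\Mx)^{p/2}/k^{p}$ and applying the induction hypothesis to $\E(\Mx)^{p/2}$, which gives $\E|\gD M_k|^p\le c_p n^{p-1}$ uniformly in $k$ and hence $c_pn^p$ after summation.
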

\begin{proof}
  By Lyapunov's inequality, it suffices to prove \eqref{lp} for $p=2^j$,
 $j\ge1$   integer. 
We use induction on $j$. The base case $p=2$ is proved in \refL{LW*}.
In the rest of the proof, we thus assume $p\ge4$ and that \eqref{lp} holds
for the exponent $p/2$ (or smaller).
We use \eqref{burk}, and it remains to estimate the two last terms on its \rhs.

First, by \eqref{gdm} and \eqref{tw7},
\begin{align}\label{sw1}
\E\bigpar{\xpar{\gD M_i}^2\mid\cF_i}
=\Var\bigpar{W_{k-1}\mid\cF_k}
\le 10 W_k \le 10 M_k \le 10 \Mx.
\end{align}
Hence, \eqref{ssm} yields
\begin{align}\label{sw2}
  s(M)\le \sqrt{10 n \Mx}
\end{align}
and the induction hypothesis yields
\begin{align}\label{sw3}
 \E  s(M)^p\le c_p n^{p/2}\E\bigpar{\Mx}^{p/2}
\le c_p n^p.
\end{align}

Next, we recall the well known moment estimate for the binomial distribution
\begin{align}
  \E |\zeta-\E\zeta|^p \le c_p (Nq)^{p/2} + c_pNq,
\qquad \zeta\in\Bin(N,q).
\end{align}
(Coincidentally, this can be shown by the Burkholder inequality used in
\eqref{burk}, 
writing the binomial variable $\zeta-\E\zeta$ 
as a sum of $N$ independent centred Bernoulli variables.) 
Hence,
recalling the conditional distribution \eqref{zk},
we have 
\begin{align}\label{sw4}
  \E \bigpar{\bigabs{Z_k-\E(Z_k\mid\cF_k)}^p\mid\cF_k}&
\le c_p (Y_k/k)^{p/2} +c_p(Y_k/k)
\le c_p (W_k/k^2)^{p/2} +c_p(W_k/k^2)
\notag\\*&
\le c_p (\Mx/k^2)^{p/2} +c_p\Mx/k^2
\end{align}
and thus
\begin{align}\label{sw5}
  \E {\bigabs{Z_k-\E(Z_k\mid\cF_k)}^p}&
\le c_p k^{-p}\E(\Mx)^{p/2} +c_p k^{-2}\E\Mx
.\end{align}
Consequently, by the induction hypothesis, 
\begin{align}\label{sw6}
 k^p \E {\bigabs{Z_k-\E(Z_k\mid\cF_k)}^p}&
\le c_p n^{p/2} + c_pk^{p-2}n
\le  c_pn^{p-1}.
\end{align}
Similarly, since $J_k$ has a conditional Bernoulli distribution,
and using \eqref{l23},
\begin{align}\label{sw7}
  \E \bigpar{\bigabs{J_k-\E(J_k\mid\cF_k)}^p\mid\cF_k}&
\le c_p   \E \bigpar{\abs{J_k}^p\mid\cF_k}
= c_p  \E \bigpar{J_k\mid\cF_k}
\notag\\&
\le c_p W_k/k^2
\le c_p M_k/k^2
\le c_p \Mx/k^2
\end{align}
and thus, using again \eqref{lw*},
\begin{align}\label{sw8}
k^p\E \bigabs{J_k-\E(J_k\mid\cF_k)}^p&
\le c_p k^{p-2}  \E \Mx
\le c_p k^{p-2}n
\le c_p n^{p-1}.
\end{align}
Hence, \eqref{gdm}, 
\eqref{sw6} and \eqref{sw8} yield
\begin{align}\label{sw9}
\E|\gD M_k|^p&
\le c_p k^p\E \bigabs{Z_k-\E(Z_k\mid\cF_k)}^p
+ c_p k^p\E \bigabs{J_k-\E(J_k\mid\cF_k)}^p
\le c_p n^{p-1}.
\end{align}
The induction step is shown by \eqref{burk}, \eqref{sw3} and \eqref{sw9},
which completes the proof.
\end{proof}

We proceed to our main objective, the number $X$ of vertices in $\DD_n$.

\begin{lemma}\label{LXp}
  For every $p>0$,
  \begin{align}\label{lxp}
    \E (X\nn)^p \le c_p n^{p/2}.
  \end{align}
\end{lemma}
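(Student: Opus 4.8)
The plan is to bound the three terms in the Doob decomposition \eqref{r2}, $X = 1 + L_0 + B_0$, separately in $L^p$. The constant $1$ is harmless, so it suffices to show $\E B_0^p \le c_p n^{p/2}$ and $\E |L_0|^p \le c_p n^{p/2}$, and then combine by Minkowski's inequality.

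For $B_0$, I would mimic the argument for $\E B_0 \le 4\sqrt n$ in \eqref{r6}, but now at the level of $p$th moments. The key input is the bound $B_0 - B_\ell \le \ell$ (since $0 \le J_i \le 1$), together with a bound on $\E B_\ell^p$ for $\ell$ of order $\sqrt n$. From \eqref{r4}, $B_k = \sum_{i=k+1}^{n-1}(1 - (1-\tfrac1i)^{Y_i}) \le \sum_{i=k+1}^{n-1} Y_i/i = \sum_{i=k+1}^{n-1} W_i/i^2 \le \Mx \sum_{i=k+1}^{\infty} i^{-2} \le \Mx/k$. Hence for $\ell := \ceil{\sqrt n}$ we get $B_\ell \le \Mx/\ell \le \Mx/\sqrt n$, so by \refL{Lp}, $\E B_\ell^p \le n^{-p/2}\E(\Mx)^p \le c_p n^{p/2}$. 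Combining with $B_0 \le B_\ell + \ell \le B_\ell + \ceil{\sqrt n}$ and Minkowski, $\E B_0^p \le c_p n^{p/2}$.

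For $L_0$, I would use the Burkholder inequality on the reverse martingale $(L_k)$ exactly as in the proof of \refL{Lp}. Its martingale differences are $\gD L_k = J_k - \E(J_k \mid \cF_k)$, a centred conditional Bernoulli variable, so $\E(|\gD L_k|^p \mid \cF_k) \le c_p \E(J_k \mid \cF_k) \le c_p W_k/k^2 \le c_p \Mx/k^2$, which is the bound already established in \eqref{sw7}. Then the conditional square function satisfies $\E(\gD L_k^2 \mid \cF_k) \le \E(J_k \mid \cF_k) \le \Mx/k^2$, so $s(L)^2 = \sum_{i=1}^{n-1}\E(\gD L_i^2 \mid \cF_i) \le \Mx \sum_i i^{-2} \le 2\Mx$, giving $\E s(L)^p \le c_p \E(\Mx)^{p/2} \le c_p n^{p/2}$ by \refL{Lp}. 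Likewise $\sum_{k=1}^{n-1} \E|\gD L_k|^p \le c_p \sum_k k^{-2}\E\Mx \le c_p n^{1/2} \le c_p n^{p/2}$ for $p \ge 1$ (and for $0 < p < 1$ Lyapunov reduces to $p=1$ anyway). Burkholder's inequality then gives $\E|L_0|^p \le c_p \E s(L)^p + c_p \sum_k \E|\gD L_k|^p \le c_p n^{p/2}$.

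I do not expect a serious obstacle here: all the hard work (the maximal $L^p$ bound on $M_k$, hence on $W_k$) is already done in \refL{Lp}, and the present lemma is essentially a corollary obtained by feeding $\Mx$ into two more applications of Burkholder together with the trivial boundedness $0 \le J_k \le 1$. The only mild care needed is the reduction to $p \ge 1$ (or $p = 2^j$) via Lyapunov's inequality, and keeping track that the $\sum_k k^{-2}$ and $\sum_k k^{p-2}$ tails behave as claimed; the latter is where one uses that $X$ is summed only up to $n-1$, but in fact the $\Mx/k^2$ bound makes even the sum over all $k \ge 1$ converge, so no truncation subtlety arises for $L_0$ (unlike for $B_0$, where the $B_0 - B_\ell \le \ell$ trick is genuinely needed).
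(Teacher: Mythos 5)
Your proposal takes essentially the same route as the paper: the decomposition $X=1+L_0+B_0$ from \eqref{r2}, the bound on $B_0$ via $B_k\le\Mx/k$ and the truncation $B_0\le B_{\ceil{\sqrt n}}+\ceil{\sqrt n}$, and Burkholder applied to the reverse martingale $(L_k)$. Two small differences in the $L_0$ step are worth flagging. The paper bounds the conditional square function by $s(L)^2\le B_0$ and hence gets $\E s(L)^p\le c_pn^{p/4}$, while your cruder $s(L)^2\le c\,\Mx$ yields only $c_pn^{p/2}$; both are sufficient here. More substantively, your line $\sum_k\E|\gD L_k|^p\le c_p\sum_k k^{-2}\E\Mx\le c_pn^{1/2}$ contains an arithmetic slip: since $\E\Mx=O(n)$ by \refL{Lp}, that sum is $O(n)$, not $O(n^{1/2})$, so the final step $\le c_p n^{p/2}$ requires $p\ge2$ rather than $p\ge1$ as written. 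Your Lyapunov reduction to $p=2^j$ absorbs this, so the argument still closes, but the paper sidesteps the issue more cleanly by invoking the $\E\bigpar{\max_k|\gD L_k|}^p\le1$ form of Burkholder's inequality, which follows immediately from $|\gD L_k|\le1$ and works uniformly in $p$.
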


\begin{proof}
  We use the decomposition \eqref{r2}
and argue similary as in the proof of \refL{Lp}.
First, by \eqref{r4}, \cf{} \eqref{r5}--\eqref{r6},
\begin{align}\label{fy1}
  B_k \le \sum_{i=k+1}^{n-1}\frac{Y_i}{i}
\le \sum_{i=k+1}^{n-1}\frac{W_i}{i^2}
\le \Mx\sum_{i=k+1}^{\infty}\frac{1}{i^2}
\le \frac{\Mx}{k}
\end{align}
and thus
\begin{align}\label{fy2}
  B_0 \le B_{\ceil{\sqrt n}}+\ceil{\sqrt n}
\le n\qqw \Mx + 2n\qq.
\end{align}
Hence, by \refL{Lp},
for every $p>0$,
\begin{align}\label{fy3}
  \E B_0^p \le c_p n^{-p/2}\E(\Mx)^p + c_p n^{p/2}
\le c_p n^{p/2}.
\end{align}

Next, the conditional square function of the reverse martingale $L_k$ is
given by, see \eqref{r3} and \eqref{r4} and recall again that 
$(J_i\mid \cF_i)$ is a Bernoulli variable,
\begin{align}\label{fy4}
  s(L)^2 
= \sum_{i=1}^{n-1} \Var\bigpar{J_i\mid \cF_i}
\le \sum_{i=1}^{n-1} \E\bigpar{J_i\mid \cF_i}
=B_0
.\end{align}
Consequently, using \eqref{fy3}, for every $p>0$,
\begin{align}\label{fy5}
  \E s(L)^p \le \E B_0^{p/2} \le c_p n^{p/4}.
\end{align}
Furthermore, 
\begin{align}\label{fy6}
\gD L_k:=L_{k-1}-L_k = J_k-\E\bigpar{J_k\mid \cF_k}
\end{align}
and thus $|\gD L_k|\le 1$.
Consequently, the conditional Burkholder inequality in \eqref{burk} yields
\begin{align}\label{fy7}
  \E |L_0|^p
\le c_p \E s(L)^p + c_p \E\bigpar{\max_k|\gD L_k|}^p
\le c_p n^{p/4}.
\end{align}

The result \eqref{lxp} now follows from \eqref{r2}, \eqref{fy3} and \eqref{fy7}.
\end{proof}

\begin{proof}[Proof of \refT{TX}, conclusion]
\refL{LXp} shows that $\E|X\nn/\sqrt n|^p = O(1)$ for every fixed $p>0$.
By a standard argument, see{} \eg \cite[Theorems 5.4.2 and 5.5.9]{Gut},
this implies uniform integrability of the sequence
$|X\nn/\sqrt n|^p$  for every $p>0$ and thus convergence of all moments in
\eqref{lx2}. 
(Recall that convergence in distribution was proved in \refS{Spf1}.)

Finally,  \eqref{tx2}--\eqref{tx3} now follow from the formula
\begin{align}\label{chi2}
  \E \chi_4^r = 2^{r/2}\gG\Bigpar{\frac{r}2+2},
\end{align}
which is a simple  consequence of \eqref{chi1},
or of \eqref{semla} and \eqref{Gmom}.
This completes the proof.
\end{proof}

\section{Higher degree $d$}\label{Sd}

We have so far considered the random $2$-dag, with outdegree $d=2$.
The arguments and results above extend to any constant $d\ge2$ with minor
modifications which we sketch here, omitting straightforward details.
We let $d\ge2$ be fixed, and let $c$ and $c_p$ denote constants that may
depend on $d$ (and $p$); these may change value from one occurrence to the next.
Note that the case $d=2$ treated above is included as a special case below.

We define $Y_k$, $Z_k$, $J_k$, and $\cF_k$ as in \refS{Sbasic}; thus
$Y_{n-1}=d$,
\eqref{jk} and 
\eqref{zk} still hold, but \eqref{tw1} is replaced by
\begin{align}\label{tw1d}
  Y_{k-1}=Y_k-Z_k+dJ_k
.\end{align}
Then, instead of \eqref{tw2}--\eqref{tw2+},
\begin{align}\label{tw2d}
  \E \bigpar{Y_{k-1}\mid\cF_k}&
= Y_k -\E \bigpar{Z_k\mid \cF_k}+d\P\bigpar{Z_k\ge1\mid\cF_k}
\notag\\&
=Y_k-\tfrac{1}{k}Y_k+d\bigpar{1-(1-\tfrac1k)^{Y_k}}
\end{align}
and
\begin{align}\label{tw2+d}
  \E \bigpar{Y_{k-1}\mid\cF_k}&
\le Y_k-\frac{1}{k}Y_k + \frac{d}{k}Y_k 
=\frac{k+d-1}{k} Y_k.
\end{align}
We now define, letting $m\rise\ell:=m(m+1)\dotsm(m+\ell-1)$ denote the
rising factorial,
\begin{align}
  W_k:=(k+1)\rise{d-1}Y_k
=(k+1)\dotsm(k+d-1)Y_k
.\end{align}
Then, \eqref{tw2+d} yields
\begin{align}\label{tw4+d}
  \E \bigpar{W_{k-1}\mid\cF_k}&
\le W_k,
\end{align}
and thus again $W_k$ is a reverse supermartingale, with a Doob
decomposition \eqref{tm1} where now
\begin{align}
  M_{n-1}=W_{n-1}=dn\rise{d-1}
=O\bigpar{n^{d-1}}.
\end{align}
We still have \eqref{tw6}, up to the numerical constants 
(which depend on $d$), while  
we now have
\begin{align}\label{tw7d}
  \Var\bigpar{W_{k-1}\mid \cF_k}&
=O\bigpar{k^{2d-3}Y_k}
=O\bigpar{k^{d-2}W_k}
\end{align}
and
\begin{align}\label{tw9d}
  \E W_k^2 \le \E M_k^2 \le\E(\Mx)^2
= O\bigpar{n^{2d-2}}
.\end{align}

\refLs{L2}--\ref{LA} take the form
\begin{align}\label{l21d}
  \P\xpar{Z_k\ge1} & \le c \frac{n^{d-1}}{k^d},
\\\label{l22d}
  \P\xpar{Z_k\ge2}& \le c\frac{n^{2d-2}}{k^{2d}},
\\\label{la0d}
A_{k-1}-A_k &\le c\frac{W_k^{2}}{k^{d+1}},
\\\label{lad}
\E A_k &\le c\frac{n^{2d-2}}{k^d}.
\end{align}

The moment estimates in \refS{Smom} extend too.
We find $s(M)\le c\sqrt{n^{d-1}\Mx}$  and obtain by induction, for every
$p>0$,
\begin{align}\label{pia}
  \E (\Mx)^p \le c_p n^{p(d-1)}.
\end{align}
Instead of \eqref{fy1}--\eqref{fy3} and \eqref{fy7} we obtain
\begin{align}\label{pib}
  B_k &\le \Mx/k^{d-1},
\\\label{pic}
B_0&\le B_{\ceil{n^{(d-1)/d}}}+\ceil{n^{(d-1)/d}}
\le n^{-(d-1)^2/d}\Mx +2n^{(d-1)/d},
\\\label{pid}
\E B_0^p&\le c_p n^{p(d-1)/d},
\\\label{pie}
\E |L_0|^p&\le c_p n^{p(d-1)/(2d)},
\end{align}
and thus \eqref{r2} yields
\begin{align}\label{pif}
  \E \bigpar{X\nn}^p&\le c_p n^{p(d-1)/d}.
\end{align}

We may couple the initial phase of the dag with a branching process as in
\refS{SI}; however, now the each particle splits into $d$ new particles.
The corresponding standard process $\cYY_t$ starting with one particle has
\pgf, see \eg{} \cite[Remark III.5.1]{AN},
\begin{align}\label{pik}
  \E s^{\cYY_t}
= s e^{-t}\bigsqpar{1-\bigpar{1-e^{-(d-1)t}}s^{d-1}}^{-1/(d-1)},
\end{align}
which means that $(\cYY_t-1)/(d-1)$ has a negative binomial distribution 
$\NegBin\bigpar{\frac{1}{d-1},e^{-(d-1)t}}$. 
Since our version $\cY_t$ starts with $d$ particles as $t=0$,
and thus $\cY_t$ is the sum of $d$ independent copies of $\cYY_t$,
it follows that $(\cY_t-d)/(d-1)\in\NegBin\bigpar{\frac{d}{d-1},e^{-(d-1)t}}$. 
It also follows that
\begin{align}\label{pil}
\E\cY_t &= d \E\cYY_t 
=d+(d-1)\E \NegBin\Bigpar{\frac{d}{d-1},e^{-(d-1)t}}
\notag\\&
=d+(d-1)\frac{d}{d-1}\bigpar{ e^{(d-1)t}-1}
= de^{(d-1)t}
\end{align}
and thus, after the same time change as before,
\begin{align}\label{pim}
  \E\hcY_x =\E\cY_{-\log x}= d/ x^{d-1},
\qquad 0<x\le 1.
\end{align}
Moreover, also from \eqref{pik}, as \ttoo,
\begin{align}\label{pig}
  e^{-(d-1)t}\cYY_t\asto\gG\Bigpar{\frac{1}{d-1},d-1}
\end{align}
and thus, with $x=e^{-t}\to0$, 
\begin{align}\label{pih}
  e^{-(d-1)t}\cY_t=x^{d-1}\hcY_x\asto
\gG\Bigpar{\frac{d}{d-1},d-1}.
\end{align}

We may choose $n_1:=\floor{n/\log n}$ as in \refS{SI}, and then
\refT{TYule} holds, except that $\log^2n/n$ is replced by $\log^dn/n$.
Furthermore, we now have
\begin{align}\label{lxid}
\Xi\nn:=
\frac{W\nn_{n_1}}{n^{d-1}}\dto \xi\in\gG\Bigpar{\frac{d}{d-1},d-1}.
\end{align}

In \refS{SII}, we now choose $n_2\gg n^{(d-1)/d}$, and we have
\begin{align}\label{lm2d}
    \max_{n_1\ge k\ge 0}\lrabs{\frac{M_k}{n^{d-1}}-\Xi\nn}&\plito0,
\\
    \max_{n_1\ge k\ge n_2}\lrabs{\frac{W_k}{n^{d-1}}-\Xi\nn}&\plito0.
  \end{align}

In \refS{SIII}, we define
\begin{align}\label{hAd}
\hA\nn_t:=n^{-(d-1)} A_{t n^{(d-1)/d}}\nn,
\qquad t\ge0.
\end{align}
Then tightness holds as in \refL{LA2},  and we can argue as in the proof of
\refT{TIII} using a suitable subsequence and a suitable coupling.
Then \eqref{kaa} holds \as{} uniformly on compact intervals, 
and \eqref{kb5} becomes
\begin{align}\label{kb5d}
  \frac{ Y_{\ceil{tn\xdd}}\nn}{\ceil{tn\xdd}}
=   \frac{ W_{\ceil{t n\xdd}}\nn}{(t^d+o(1))n^{d-1}} 
\to
\cB(t):=
  t^{-d}\bigpar{ \xi-\cA(t)}.
\end{align}
This leads by the arguments above to 
\begin{align}\label{kb6d}
  \ddt \hA\nn_t
\to - dt^{d-1}\bigpar{e^{-\cB(t)}-1+\cB(t)},
\end{align}
uniformly on compact intervals in $(0,\infty)$,
and then to
the differential equation
(instead of \eqref{kb8})
\begin{align}\label{kb8d}
\cA'(t)=- dt^{d-1}\bigpar{e^{-\cB(t)}-1+\cB(t)},
\qquad 0<t<\infty
,\end{align}
with the solution
\begin{align}\label{q6d}
  \cB(t)=\log\bigpar{1+C/t^d},
\qquad t>0,
\end{align}
where
again we find $C=\xi$ \as, and consequently
\begin{align}\label{qad}
\cA(t)=\xi-t^d \log\bigpar{1+\xi/t^d}.
\end{align}
Finally, we extend the convergence to $\ooo$ as above, 
and reach the conclusion that (generalizing \refT{TIII})
\begin{align}\label{tiiid}
\sup_{0\le t \le b}\Bigabs{n^{-d} W_{t n\xdd}\nn - t^d \log\bigpar{1+\Xi\nn/t^d}}
\pto 0,
\end{align}
for every fixed $b>0$.

In \refS{Spf1}, 
we  replace \eqref{rb5} by
\begin{align}\label{rb5d}
  \frac{ Y_{\ceil{tn\xdd}}\nn}{\ceil{tn\xdd}}
=   \frac{ W_{\ceil{t n\xdd}}\nn}{(t^d+o(1))n^{d-1}} 
\to
\cB(t)=
\log\bigpar{1+\xi/t^d}
\end{align}
and define
\begin{align}
  \label{vs1d}
\hB\nn_t:=  n\xddw B\nn_{tn\xdd}
.\end{align}
This leads to
\begin{align}\label{vs7d}
  n\xddw B\nn_{0}\pto\intoo\frac{\xi}{\xi+t^d}\dd t=
  \xi^{1/d}\intoo\frac{1}{1+x^d}\dd x
=\frac{\pi}{d\sin(\pi/d)}\xi^{1/d},
\end{align}
where the integral is evaluated by a substitution yielding a 
Beta integral \cite[5.12.3, together with 5.12.1 and 5.5.3]{NIST}:
\begin{align}
 \intoo\frac{1}{1+x^d}\dd x&
= \frac{1}{d}\intoo\frac{y^{\frac{1}{d}-1}}{1+y}\dd y
=\frac{1}{d}B\Bigpar{\frac{1}{d},\frac{d-1}{d}}
=\frac{1}{d}\gG\Bigpar{\frac{1}{d}}\gG\Bigpar{\frac{d-1}{d}}
\notag\\&
=\frac{\pi}{d\sin(\pi/d)}
.\end{align}

\begin{proof}[Proof of \refT{TD}]
  The limit in distribution \eqref{td1} follows from 
\eqref{r2}, \eqref{pie} and \eqref{vs7d},
recalling  \eqref{lxid} and writing $\xi=(d-1)\gamma$.
Moment convergence then follows from the bounds \eqref{pid} and \eqref{pie}
as in the case $d=2$. Finally, the moment convergence and \eqref{Gmom} yield
\eqref{tdmom}.
\end{proof}

\section{Further results}\label{Sfurther}
We give here some further results on the structure of the random dag $\DD_n$.
Again, we consider for simplicity only the case $d=2$, and leave the
straightforward extensions to larger $d$ to the reader.

\subsection{Density of descendants}\label{SSdensity}

The proof of \refT{TX} shows that most vertices in $\DD_n$
are in the range $O\bigpar{\sqrt n}$.
More preciesely, 
let $0\le a\le b\le\infty$, and let 
\begin{align}\label{fa1}
X_{a,b}\nn:=\bigabs{\DD_n\cap(a\nqq,b\nqq]},  
\end{align}
the number of 
descendants of $n$ (red vertices) in the interval $(a\nqq,b\nqq]$.
(Thus, $X\nn=X_{0,\infty}\nn$.)
Then, \refL{LX} can be extended:
\begin{lemma}  \label{LXX}
If\/ $0\le a\le b\le\infty$ are fixed, then as \ntoo,
\begin{align}\label{fa2}
 {\frac{X_{a,b}\nn}{\sqrt n} - \int_a^b \frac{\Xi\nn}{\Xi\nn+t^2}\dd t}
\lito0
\end{align}
and thus
\begin{align}\label{fa3}
 \E\biggpar{\frac{X_{a,b}\nn}{\sqrt n}\biggm|\Xi\nn} 
- \int_a^b \frac{\Xi\nn}{\Xi\nn+t^2}\dd t
\lito0
\end{align}
and, unconditionally,
\begin{align}\label{fa4}
\frac{\E X_{a,b}\nn}{\sqrt n} \to \int_a^b p(t)\dd t,
\end{align}
where
\begin{align}\label{fa5}
  p(t):=
\E\frac{\xi}{\xi+t^2}
=\intoo \frac{x^2}{x+t^2}e^{-x}\dd x.
\end{align}
\end{lemma}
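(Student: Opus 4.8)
The plan is to recognise \refL{LXX} as a localised version of \refL{LX} (which is precisely the case $a=0$, $b=\infty$) and to rerun the argument of that lemma restricted to the window $(a\nqq,b\nqq]$. Set $k_a:=\floor{a\nqq}$ and $k_b:=\min(\floor{b\nqq},n-1)$, with $k_b=n-1$ when $b=\infty$. For all large $n$ the root $n$ lies in $(a\nqq,b\nqq]$ exactly when $b=\infty$, so \eqref{r1} gives $X_{a,b}\nn=\indic{b=\infty}+\sum_{k=k_a+1}^{k_b}J_k$, and the Doob decomposition \eqref{r2}--\eqref{r4} splits the sum into a reverse-martingale increment $L_{k_a}-L_{k_b}$ and a predictable increment $B_{k_a}-B_{k_b}$. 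First I would dispose of the martingale part: the conditional square function computation behind \eqref{r7} gives $\E\bigpar{L_{k_a}-L_{k_b}}^2=\sum_{i=k_a+1}^{k_b}\E\bigsqpar{\Var(J_i\mid\cF_i)}\le\E B_0\le4\nqq$, so that $n\qqw\bigpar{L_{k_a}-L_{k_b}}\to0$ in $L^2$ and is negligible.

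For the predictable part, note $n\qqw\bigpar{B_{k_a}-B_{k_b}}=\hB\nn_{k_a/\nqq}-\hB\nn_{k_b/\nqq}$ in the notation of the proof of \refL{LX}, where $\bigabs{\ddt\hB\nn_t}\le1$ by \eqref{vs2}. I would work under the Skorohod coupling of that proof — available for the full sequence now that \refT{TIII} is established — under which \eqref{vs4} holds \as{} and $\Xi\nn\to\xi\in\gG(2)$. For $0<a\le b<\infty$, \eqref{vs4} together with the $1$-Lipschitz bound (used to replace $k_a/\nqq$, $k_b/\nqq$ by $a$, $b$) yields $n\qqw\bigpar{B_{k_a}-B_{k_b}}\to\int_a^b\frac{\xi}{\xi+t^2}\dd t$ \as. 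The endpoints $a=0$ and $b=\infty$ I would handle by the truncation already used to pass from \eqref{vs4} to \eqref{vs7}: $0\le\hB\nn_0-\hB\nn_{t_1}\le t_1$ for small $t_1>0$ by \eqref{vs5}, and $\E\hB\nn_{t_2}\le4/t_2$ for large $t_2$ by \eqref{vs6}, and then let $t_1\downto0$ and $t_2\to\infty$, using $\int_0^\infty\frac{\xi}{\xi+t^2}\dd t=\frac\pi2\sqrt\xi<\infty$. Combining the two parts, $X_{a,b}\nn/\nqq\pto\int_a^b\frac{\xi}{\xi+t^2}\dd t$; since $\E\bigpar{X_{a,b}\nn/\nqq}^2\le\E\bigpar{X\nn/\nqq}^2=O(1)$ by \refL{LXp}, the family is uniformly integrable and this improves to $L^1$ convergence. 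As $\int_a^b\frac{\Xi\nn}{\Xi\nn+t^2}\dd t\to\int_a^b\frac{\xi}{\xi+t^2}\dd t$ \as{} (dominated convergence, the integrand being $\le\min(1,\Xi\nn/t^2)$ with $\Xi\nn$ \as{} bounded along the sequence), the quantity in \eqref{fa2} converges to $0$ in $L^1$ under the coupling; and since it depends only on the joint law of $(X_{a,b}\nn,\Xi\nn)$ for each fixed $n$, \eqref{fa2} follows in general. Then \eqref{fa3} follows because conditioning on $\Xi\nn$ is an $L^1$-contraction, and \eqref{fa4} follows by taking expectations in \eqref{fa2}, Fubini ($\int_a^b p(t)\dd t=\E\int_a^b\frac{\xi}{\xi+t^2}\dd t$ by \eqref{fa5}), and the convergence $\E\int_a^b\frac{\Xi\nn}{\Xi\nn+t^2}\dd t\to\int_a^b p(t)\dd t$, which holds since $\Xi\nn\dto\xi$ by \refL{LXi} while $\E\Xi\nn\le2$ by \eqref{tm4}, making the functional $x\mapsto\int_a^b\frac{x}{x+t^2}\dd t\le\frac\pi2\sqrt x$ of $\Xi\nn$ uniformly integrable.

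So the bulk of the proof is a direct transcription of the proof of \refL{LX}. The one point that needs a little care — the hard part, such as it is — is to keep the convergence in \eqref{fa2} genuinely in $L^1$ (rather than merely in probability) and with the \emph{random} normalisation $\int_a^b\frac{\Xi\nn}{\Xi\nn+t^2}\dd t$ in place of its limit; both are handled by the uniform-integrability bounds coming from \refL{LXp} and from $\E\Xi\nn\le2$, used together with the tail truncations at $t=0$ and $t=\infty$ that make those bounds effective uniformly in $n$.
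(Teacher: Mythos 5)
Your proposal is correct and follows essentially the same route as the paper: the same decomposition $X_{a,b}\nn=B_{k_a}-B_{k_b}+L_{k_a}-L_{k_b}$ (plus the negligible root term), the same $L^2$ bound on the martingale increment via \eqref{r7}, the same use of \eqref{vs4}--\eqref{vs6} under the Skorohod coupling for the predictable part and the endpoints $a=0$, $b=\infty$, and the same uniform-integrability upgrade to $L^1$ via $X_{a,b}\nn\le X\nn$ and \refL{LXp}, followed by conditioning and Fubini for \eqref{fa3}--\eqref{fa4}. The only cosmetic difference is that you justify uniform integrability of $\sqrt{\Xi\nn}$ from $\E\Xi\nn\le2$, whereas the paper bounds it by $\tfrac\pi2\sqrt{\Mx/n}$ and invokes \refL{Lp}; both are valid.
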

\begin{proof}
  If $0<a\le b<\infty$, 
let $k_a:=\floor{a\nqq}$ and $k_b:=\floor{b\nqq}$.
Then \eqref{r3}--\eqref{r4} show that, 
provided $n$ is so large that $b\nqq<n$, 
\begin{align}\label{fa6}
  X\nn_{a,b}
=\sum_{k=k_a+1}^{k_b} J_k
=B_{k_a}-B_{k_b}+L_{k_a}-L_{k_b}.
\end{align}
Convergence in probability in \eqref{fa2} then follows
from \eqref{vs1} and \eqref{vs4} together with \eqref{r7} 
(and, for example, Doob's inequality),
and as always \eqref{kb2}.
If $a=0$ or $b=\infty$, this result follows similarly using also
\eqref{vs5}--\eqref{vs6} as in the proof of \refL{LX}. 

Thus, \eqref{fa2} holds in probability. 
This implies convergence also in $L^1$, since
uniform integrability holds because 
$X\nn_{a,b}/\sqrt n\le X\nn/\sqrt n$ and, recalling \eqref{Xi}, 
\begin{align}\label{fa7}
\int_a^b \frac{\Xi\nn}{\Xi\nn+t^2}\dd t
\le \intoo \frac{\Xi\nn}{\Xi\nn+t^2}\dd t=\frac{\pi}2\sqrt{\Xi\nn}
\le \frac{\pi}2\sqrt{\frac{\Mx}{n}}
\end{align}
and these are uniformly integrable by \refLs{LXp} and \refL{Lp}.

Next, \eqref{fa3} follows from \eqref{fa2} by taking the conditional
expectation,
and \eqref{fa4} follows  by taking the unconditional expectation,
using \eqref{lxi} and Fubini's theorem,
and again the uniform integrability of \eqref{fa7}.
The final equality in \eqref{fa4} follows since $\xi\in\gG(2)$ has density
function $xe^{-x}$ by \eqref{Gamma}.
\end{proof}

\begin{remark}
The function $p(t)$ can be expressed using the exponential integral
$E_1(x)$, 
see \cite[6.2.1-2 and 6.7.1]{NIST}:
\begin{align}
    p(t)=\intoo\Bigpar{x-t^2+\frac{t^4}{x+t^2}}e^{-x}\dd x
=1-t^2+t^4e^{t^2} E_1(t^2)
.\end{align}
\end{remark}

Informally, \refL{LXX} says that, asymptotically, the density of descendants
of $n$ around any $k < n$ is $\Xi/(\Xi+k^2/n)$ conditioned on $\Xi$, and
$p(k/\sqrt n)$ unconditionally.
Another aspect of this is the following theorem, where we consider a single
vertex $k$.

\begin{theorem}\label{Tden}
  Conditioned on $\Xi\nn$, the probability that vertex $k$ 
is a descendant of $n$ (\ie, belongs to $\DD_n$)
is
\begin{align}\label{tden1}
  \P\bigpar{J\nn_k=1\mid\Xi\nn}
=
\frac{\Xi\nn}{\Xi\nn+k^2/n}+\op(1),
\end{align}
uniformly in $k\le n_1$.
Hence, the unconditional probability is,
with $p(t)$ given by \eqref{fa5},
\begin{align}\label{tden2}
  \P\bigpar{J\nn_k=1}
=
\E\frac{\xi}{\xi+k^2/n}+o(1)
=
p\bigpar{k/\sqrt n}
+o(1),
\end{align}
uniformly in $k\le n_1$.
\end{theorem}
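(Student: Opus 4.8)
The plan is to deduce this from the density estimates already established, especially \refL{LXX} and the machinery of \refS{SIII}. The key point is that $J_k\nn$ is the indicator that $Z_k\ge1$, and by \eqref{r4} we have the exact conditional formula $\P(J_k\nn=1\mid\cF_k)=1-(1-1/k)^{Y_k}$. So everything reduces to controlling $Y_k$, and \refT{TIII} (in the form \eqref{rb5}, obtained under the Skorohod coupling) says precisely that $Y_{\ceil{t\sqrt n}}\nn/\ceil{t\sqrt n}\to\log(1+\xi/t^2)$ uniformly on compact subintervals of $\xooo$, with $\Xi\nn\to\xi$. Hence on any fixed interval $t\in[\gd,b]$ we get, a.s.\ along the coupling, $1-(1-1/k)^{Y_k}\to 1-e^{-\log(1+\xi/t^2)}=\xi/(\xi+t^2)$, which rescaled back in terms of $\Xi\nn$ and $k$ gives \eqref{tden1}. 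Uncoupling, this is convergence in probability, i.e.\ the $\op(1)$ in \eqref{tden1}, for $k$ in the range $\gd\sqrt n\le k\le b\sqrt n$.

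The remaining work is to handle the two end ranges: $k=o(\sqrt n)$ (down to $k=1$) and $\sqrt n\ll k\le n_1$. For small $k$ the right-hand side of \eqref{tden1} is $\Xi\nn/(\Xi\nn+o(1))\to1$, so it suffices to show $\P(J_k\nn=1\mid\Xi\nn)\to1$ uniformly for $k\le\gd\sqrt n$, or rather to show that the difference is $\le\eps$ once $\gd$ is small; since $1-(1-1/k)^{Y_k}$ is increasing in $Y_k$ and $Y_k\ge Y_{\ceil{\gd\sqrt n}}$ for $k\le\gd\sqrt n$ (the number of crossing edges can only have been smaller later), monotonicity lets one compare with the value at $t=\gd$, which is $\ge \xi/(\xi+\gd^2)$, arbitrarily close to $1$. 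For large $k$, $\sqrt n\ll k\le n_1$, the right-hand side of \eqref{tden1} is $O(\Xi\nn n/k^2)\to0$, and correspondingly \refL{L2}\eqref{l21} gives $\P(Z_k\ge1)\le 2n/k^2$; more precisely \eqref{l23} together with \refT{TWXi} (which says $W_k/n\to\Xi\nn$ uniformly for $n_1\ge k\ge n_2$) gives $\P(J_k\nn=1\mid\cF_k)\le W_k/k^2=(1+\op(1))\,n\Xi\nn/k^2$, matching the claimed expression up to $\op(1)$. One must check the estimates are genuinely uniform in $k$ across the overlap of these three regimes — that is where a little care with the $\op(1)$ terms (made uniform via the maximal inequalities \refL{LW*} and \refT{TWXi}) is needed.

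Finally, \eqref{tden2} follows from \eqref{tden1} by taking expectations: since $\P(J_k\nn=1)=\E\bigsqpar{\P(J_k\nn=1\mid\Xi\nn)}$ and the conditional probability is bounded by $1$ and converges in probability to $\Xi\nn/(\Xi\nn+k^2/n)$, while $\Xi\nn\dto\xi\in\gG(2)$ by \refL{LXi}, bounded convergence gives $\P(J_k\nn=1)=\E\bigsqpar{\Xi\nn/(\Xi\nn+k^2/n)}+o(1)$; then $\Xi\nn\dto\xi$ and uniform integrability (trivial here, the integrand is in $[0,1]$) give $\E\bigsqpar{\Xi\nn/(\Xi\nn+k^2/n)}=\E\bigsqpar{\xi/(\xi+k^2/n)}+o(1)=p(k/\sqrt n)+o(1)$, the last equality by the definition \eqref{fa5} of $p$. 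The main obstacle is not any single estimate but stitching the three $k$-ranges together with uniformity in $k$; each piece is already essentially available from \refSs{SII}--\ref{SIII}, so the proof is mostly bookkeeping once the monotonicity argument for small $k$ is written down.
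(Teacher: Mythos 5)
Your overall strategy is the same as the paper's: handle the middle range $\gd\nqq\le k\le b\nqq$ via the Phase~III machinery (\eqref{vs2}--\eqref{vs3} under the Skorohod coupling), treat the two boundary ranges by comparison with the values at $k_\gd=\ceil{\gd\nqq}$ and $k_b=\floor{b\nqq}$, and then take expectations for \eqref{tden2}. The large-$k$ range and the final passage to the unconditional statement are fine. But the step you give for the small-$k$ range would fail as written. You claim the pathwise inequality $Y_k\ge Y_{\ceil{\gd\nqq}}$ for $k\le\gd\nqq$. This is false: by \eqref{tw1} we have $Y_{k-1}=Y_k-Z_k+2\indic{Z_k\ge1}$, so $Y_{k-1}<Y_k$ whenever $Z_k\ge3$, and in fact the typical behaviour is the opposite of what you assert. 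From \eqref{kb5} one has $Y_k\approx 2k\log(\nqq/k)$ in the range $k\ll\nqq$, so $Y_k$ decreases essentially linearly as $k$ decreases; e.g.\ $Y_1$ is of order $\log n$ while $Y_{\ceil{\gd\nqq}}$ is of order $\nqq$. So the quantity $1-(1-1/k)^{Y_k}$ cannot be bounded below by monotonicity in $Y_k$ alone.

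The correct monotonicity — the one the paper uses — is at the level of conditional probabilities, not of the process $Y_k$ itself: for $\ell<k\le n_1$, conditioned on $\cF_k$, each of the $Y_k$ pending edges is equally likely to end at $\ell$ as at $k$ (both endpoints are uniform on $\set{1,\dots,k}$), and there may be further red edges ending at $\ell$ created between $k$ and $\ell$; hence $Z_\ell$ is stochastically larger than $Z_k$ given $\cF_k$, so $\E(J_\ell\mid\cF_k)\ge\E(J_k\mid\cF_k)$ and therefore $\E(J_k\mid\Xi\nn)$ is decreasing in $k$. That is what lets you bound the error on $[1,\gd\nqq]$ by the error at $k_\gd$ plus $2\bigpar{1-\Xi\nn/(\Xi\nn+k_\gd^2/n)}$, whose expectation tends to $2\E\bigsqpar{\gd^2/(\xi+\gd^2)}$, small for small $\gd$. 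A secondary point you gloss over: the theorem conditions on $\Xi\nn$, not on $\cF_k$, and an $\op(1)$ bound on $\E(J_k\mid\cF_k)$ does not automatically survive the further conditional expectation given $\Xi\nn$; since everything is bounded by $1$, you should first upgrade the uniform convergence in probability on $[\gd\nqq,b\nqq]$ to $L^1$ convergence (dominated convergence, as in \eqref{sel4}) and then condition. With these two repairs your plan coincides with the paper's proof.
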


\begin{proof}
Recall that $J_k=\indic{k\in\DD_n}$ is a Bernoulli variable; hence 
$\P(J_k=1)=\E J_k$, and
this holds also conditionally.

  Consider first $k\in [\gd \nqq,b \nqq]$ for some fixed $0<\gd<b<\infty$,
and let $t:=k/\sqrt n\in[\gd,b]$.
Using again the simplifying assumptions in the proof of \refT{TIII}, 
we see from \eqref{vs2}--\eqref{vs3} and \eqref{kb2} that
\begin{align}\label{sel3}
 \max_{k\in[\gd \nqq,b\nqq]}
\Bigabs{\E\xpar{J_k\mid \cF_k} - \frac{\Xi\nn}{\Xi\nn+k^2/n}}
\pto 0.
\end{align}
Hence, by dominated convergence (the $\max$ is bounded by 1),
\begin{align}\label{sel4}
\E \max_{k\in[\gd \nqq,b\nqq]}
\Bigabs{\E\xpar{J_k\mid \cF_k} - \frac{\Xi\nn}{\Xi\nn+k^2/n}}
\to 0.
\end{align}
Thus, by taking the conditional expectation with respect to $\Xi\nn$,
assuming that $n$ is so large that $b\sqrt n\le n_1$ and thus
$\E(J_k\mid\Xi\nn)=\E\bigsqpar{\E(J_k\mid\cF_k)\mid\Xi\nn}$,
\begin{align}\label{den5}
&\max_{k\in[\gd \nqq,b\nqq]}
\Bigabs{\E\xpar{J_k\mid \Xi\nn} - \frac{\Xi\nn}{\Xi\nn+k^2/n}}
\notag\\&\hskip4em
\le\max_{k\in[\gd \nqq,b\nqq]}\E
\Bigpar{\Bigabs{\E\xpar{J_k\mid \cF_k} -
  \frac{\Xi\nn}{\Xi\nn+k^2/n}}\,\Bigm|\Xi\nn}
\notag\\&\hskip4em
\le
\E \Bigpar{\max_{k\in[\gd \nqq,b\nqq]}
\Bigabs{\E\xpar{J_k\mid \cF_k} - \frac{\Xi\nn}{\Xi\nn+k^2/n}}\,\Bigm|\Xi\nn}
\lito 0,
\end{align}

Furthermore, if $n_1\ge k> \ell\ge1$, then when the evolution comes to $k$, we
have $Y_k$ red edges, and each of them ends at $k$ with probability $1/k$.
We have the same probability for each of these edges to end at $\ell$
instead, and
since the endpoints are independent, we see that conditioned on $\cF_k$,
$Z_\ell$ is stochastically larger than $Z_k$. (Larger, since there may also be
red edges ending at $\ell$ that start at $k$ or later.)
Hence, 
\begin{align}
  \E(J_\ell\mid\cF_k)=
  \P(Z_\ell\ge1\mid\cF_k)
\ge 
  \P(Z_k\ge1\mid\cF_k)
=\E(J_k\mid\cF_k)
\end{align}
and thus
$\E(J_\ell\mid\Xi\nn)\ge \E(J_k\mid\Xi\nn)$.
In other words,
$\E(J_k\mid\Xi\nn)$ is decreasing in $k\in[1,n_1]$. The same obviously
holds for
$\Xi\nn/(\Xi\nn+k^2/n)$.
Consequently, with $k_b:=\floor{b\sqrt n}$,
\begin{align}\label{den6}
\max_{k\in[b\nqq,n_1]}&
\Bigabs{\E\xpar{J_k\mid \Xi\nn} - \frac{\Xi\nn}{\Xi\nn+k^2/n}}
\notag\\
&\le\max_{k\in[b\nqq,n_1]}
\Bigpar{\E\xpar{J_k\mid \Xi\nn} + \frac{\Xi\nn}{\Xi\nn+k^2/n}}
\notag\\
&\le\Bigabs{\E\xpar{J_{k_b}\mid \Xi\nn} - \frac{\Xi\nn}{\Xi\nn+k_b^2/n}}
+2\frac{\Xi\nn}{\Xi\nn+k_b^2/n}.
\end{align}
Hence, using \eqref{den5},
\begin{align}\label{den7}
\limsup_\ntoo&
\E\max_{k\in[b\nqq,n_1]}
\Bigabs{\E\xpar{J_k\mid \Xi\nn} - \frac{\Xi\nn}{\Xi\nn+k^2/n}}
\notag\\&
\le
2\limsup_\ntoo\E\frac{\Xi\nn}{\Xi\nn+k_b^2/n}
=2\E \frac{\xi}{\xi+b^2},
\end{align}
which can be made arbitrarily small by choosing $b$ large
(by dominated convergence).
Similarly, 
with $k_\gd:=\ceil{\gd\sqrt n}$,
\begin{align}\label{den8}
\max_{k\in[1,\gd \nqq]}&
\Bigabs{\E\xpar{J_k\mid \Xi\nn} - \frac{\Xi\nn}{\Xi\nn+k^2/n}}
\notag\\
&\le\max_{k\in[1,\gd \nqq]}
\biggpar{\Bigpar{1-\E\xpar{J_k\mid \Xi\nn}} 
 + \Bigpar{1-\frac{\Xi\nn}{\Xi\nn+k^2/n}}}
\notag\\
&\le\Bigabs{\E\xpar{J_{k_\gd}\mid \Xi\nn} - \frac{\Xi\nn}{\Xi\nn+k_\gd^2/n}}
+2\Bigpar{1-\frac{\Xi\nn}{\Xi\nn+k_b^2/n}}
\end{align}
and thus
\begin{align}\label{den9}
\limsup_\ntoo&
\E\max_{k\in[1,\gd \nqq]}
\Bigabs{\E\xpar{J_k\mid \Xi\nn} - \frac{\Xi\nn}{\Xi\nn+k^2/n}}
\notag\\&
\le
2\limsup_\ntoo\E\frac{k_\gd^2/n}{\Xi\nn+k_\gd^2/n}
=2\E \frac{\gd^2}{\xi+\gd^2},
\end{align}
which can be made arbitrarily small by choosing $\gd$ small.

It follows from \eqref{den5}, \eqref{den7} and \eqref{den9} that
\begin{align}\label{den10}
\max_{k\in[1,n_1]}&
\Bigabs{\E\xpar{J_k\mid \Xi\nn} - \frac{\Xi\nn}{\Xi\nn+k^2/n}}
\lito 0,
\end{align}
which is a more precise version of \eqref{tden1}

Finally,  \eqref{kb2} implies
\begin{align}
\sup_{t>0} \lrabs{\frac{\Xi\nn}{\Xi\nn+t^2}-\frac{\xi}{\xi+t^2}}
=
\sup_{t>0} {\frac{t^2\abs{\Xi\nn-\xi}}{(\Xi\nn+t^2)(\xi+t^2)}}
\le
\frac{|\Xi\nn-\xi|}{\xi}
\asto0,
\end{align}
and thus, by dominated convergence,
\begin{align}
\sup_{t>0} \lrabs{\E\frac{\Xi\nn}{\Xi\nn+t^2}-\E\frac{\xi}{\xi+t^2}}
\le
\E\sup_{t>0} \lrabs{\frac{\Xi\nn}{\Xi\nn+t^2}-\frac{\xi}{\xi+t^2}}
\to0,
\end{align}
Hence, 
taking the expectation in \eqref{den10} yields \eqref{tden2}.
\end{proof}

\subsection{Different $n$ yield asymptotically independent
  results}\label{Scoupling} 
As noted in \refR{Rcoupling}, the construction naturally constructs 
the dags $D_n$
for all $n$ together. Using this coupling, we may consider the joint
distribution of, for example, $X\nn$ and $X\nni$.
Somewhat surprisingly, $X\nn$ and $X\nni$ are asymptotically independent:

\begin{theorem}\label{Tnni}
  As \ntoo,
  \begin{align}
\bigpar{X\nn/\sqrt{n},X\nni/\sqrt{n+1}}\dto \bigpar{\zeta,\zeta'},
  \end{align}
where $\zeta$ and $\zeta'$ are independent copies of the limit
$(\pi/\sqrt8)\chi_4$ in \eqref{tx1}.
\end{theorem}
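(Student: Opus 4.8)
The plan is to reduce the statement, using the constructions and lemmas of \refSs{SI}--\ref{Spf1}, to an asymptotic independence of the two random variables $\Xi\nn$ and $\Xi\nni$ defined as in \eqref{Xi}, and then to prove that independence by running the backward exploration of \refSS{SSrecursion} with two colours at once. Throughout I work in the coupling of \refR{Rcoupling}, so that $D_{n+1}$ is obtained from $D_n$ by adding vertex $n+1$ together with its two edges, whose endpoints are uniform on $\set{1,\dots,n}$. I use the \emph{same} threshold $\tilde n_1:=\floor{n/\log n}$ in the analysis of $\DD_n$ and of $\DD_{n+1}$; this is legitimate since $\tilde n_1/(n+1)\to0$ slowly, so that \refL{LXi}, \refL{LX} and their evident analogues with $n$ replaced by $n+1$ apply with this threshold. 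Hence $X\nn/\sqrt n-\tfrac\pi2\sqrt{\Xi\nn}\pto0$ and $X\nni/\sqrt{n+1}-\tfrac\pi2\sqrt{\Xi\nni}\pto0$, where $\Xi\nn=(\tilde n_1+1)Y\nn_{\tilde n_1}/n$ and $\Xi\nni=(\tilde n_1+1)Y\nni_{\tilde n_1}/(n+1)$, so by Slutsky's theorem, the continuous mapping theorem and \eqref{semla} (which gives $\tfrac\pi2\sqrt{\gG(2)}\eqd\tfrac\pi{2\sqrt2}\chi_4$), it suffices to show that $(\Xi\nn,\Xi\nni)\dto(\xi,\xi')$ with $\xi,\xi'$ independent and $\gG(2)$-distributed.

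The key point is that $Y\nn_{\tilde n_1}$ depends only on $\DD_n$ restricted to the \emph{top range} $\set{\tilde n_1+1,\dots,n}$, namely on which of those vertices are descendants of $n$ and on how many out-edges of each of them land at or below $\tilde n_1$; similarly $Y\nni_{\tilde n_1}$ depends only on $\DD_{n+1}$ restricted to $\set{\tilde n_1+1,\dots,n+1}$. I would therefore run the backward exploration on $D_{n+1}$, processing the vertices $n+1,n,\dots,\tilde n_1+1$ in turn, and colour a vertex \emph{red} if it is a descendant of $n$ and \emph{blue} if it is a descendant of $n+1$ (so $n$ is red, $n+1$ is blue, and a vertex may receive both colours). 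As long as no vertex has yet received both colours, the out-edges of the red vertices and the out-edges of the blue vertices are out-edges of disjoint vertex sets, hence mutually independent; so up to the first vertex receiving both colours the joint law of the two colour classes factorizes into the marginal law of the $\DD_n$-exploration times that of the $\DD_{n+1}$-exploration. Let $E_n$ be the event that no vertex of $\set{\tilde n_1+1,\dots,n}$ receives both colours. On $E_n$ the pair $(Y\nn_{\tilde n_1},Y\nni_{\tilde n_1})$ coincides with a pair of \emph{independent} variables having the marginal laws of $Y\nn_{\tilde n_1}$ and of $Y\nni_{\tilde n_1}$.

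It remains to verify that $\P(E_n^c)\to0$. By the factorization just described, $\P(E_n^c)$ equals the probability that two independent copies, one of $\DD_n$ and one of $\DD_{n+1}$, share a vertex in $\set{\tilde n_1+1,\dots,n}$. Since $n$ always belongs to the copy of $\DD_n$ while $\P(n\in\DD_{n+1})\le2/n$, a union bound together with \refL{L2}, applied to $D_n$ and to $D_{n+1}$, gives
\begin{align*}
  \P(E_n^c)&\le\frac2n+\sum_{v=\tilde n_1+1}^{n-1}\P\bigpar{Z\nn_v\ge1}\,\P\bigpar{Z\nni_v\ge1}\\
  &\le\frac2n+\sum_{v=\tilde n_1+1}^{\infty}\frac{4n(n+1)}{v^4}
  =O\bigpar{n^2/\tilde n_1^{3}}=O\bigpar{(\log n)^3/n}\to0.
\end{align*}
Consequently $\dtv\bigpar{(\Xi\nn,\Xi\nni),(\widehat\Xi,\widehat\Xi')}\le\P(E_n^c)\to0$, where $\widehat\Xi,\widehat\Xi'$ are independent with the laws of $\Xi\nn$ and of $\Xi\nni$; since each of these converges in distribution to $\gG(2)$ by \refL{LXi} and its $(n+1)$-analogue, we obtain $(\Xi\nn,\Xi\nni)\dto(\xi,\xi')$ with $\xi,\xi'$ independent $\gG(2)$, which completes the proof.

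The step I expect to be delicate is the two-colour exploration. One must notice that only collisions \emph{in the top range} can matter: low in the dag $\DD_n$ and $\DD_{n+1}$ overlap massively (both always contain the root), so there is no global disjointness, whereas in $\set{\tilde n_1+1,\dots,n}$ each of $\DD_n$ and $\DD_{n+1}$ has only $O(\log n)$ vertices. One must also set up the factorization ``until the first doubly coloured vertex'' with care, for instance by coupling the two-colour process with two genuinely independent copies and checking that they agree up to that time. The remainder is a routine transcription of the arguments in \refSs{SI}--\ref{Spf1}.
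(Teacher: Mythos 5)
Your proposal is correct and follows essentially the same route as the paper: reduce to the asymptotic independence of $\Xi\nn$ and $\Xi\nni$ via \refL{LX}, observe that the two backward explorations are independent until they first share a vertex, and bound the probability of a shared vertex above $n_1$ by $\sum_k \P(Z\nn_k\ge1)\P(Z\nni_k\ge1)=O(n^2/n_1^3)=o(1)$ using \refL{L2}, exactly as in the paper's proof. The only cosmetic difference is your two-colour formulation of the joint exploration and the explicit total-variation phrasing of the coupling, neither of which changes the substance.
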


\begin{proof}
  Consider the evolutions of the red dags $\DD_n$ and $\DD_{n+1}$ together,
starting at $n$ and $n+1$ and going down, as always; 
these evolutions are independent until they first have a common vertex.
The probability that $k$ is the first common vertex is thus at most the
probability that two independent versions of $\DD_n$ and $\DD_{n+1}$ both
contain $k$, which by \eqref{l21} is
\begin{align}
  \begin{cases}
    \P\bigpar{Z\nn_k\ge1}\P\bigpar{Z\nni_k\ge1}\le \frac{4n(n+1)}{k^4}
\le c\frac{n^2}{k^4}, & k<n,
\\
    \P\bigpar{Z\nni_n\ge1}\le \frac{2(n+1)}{n^2},& k=n.
  \end{cases}
\end{align}
Consequently, the probability that $\DD_n$ and $\DD_{n+1}$ meet before
$n\nn_1$
is
\begin{align}
  \le \sum_{k=n_1}^{n-1} c\frac{n^2}{k^4}+\frac{c}{n}
\le c \frac{n^2}{n_1^3} +\frac{c}{n}= o(1).
\end{align}
Consequently, \whp{} $\DD_n$ and $\DD_{n+1}$ are independent until $n_1$;
more formally, we may couple the pair $\bigpar{\DD_n,\DD_{n+1}}$ with a pair
$\bigpar{\DD'_n,\DD'_{n+1}}$ of independent copies of them such that the two
pairs \whp{} coincide until $n_1\nn$. In particular, this and the definition
\eqref{Xi} show that the pair $(\Xi\nn,\Xi\nni)$ can be coupled 
with a pair of independent copies of them
(defined in the same way from $\DD'_n$ and $\DD'_{n+1}$)
such that the two pairs coincide \whp 
Consequently, \refL{LXi} implies that
\begin{align}
  \bigpar{\Xi\nn,\Xi\nni}\dto\bigpar{\xi,\xi'},
\end{align}
where $\xi,\xi'\in\gG(2)$ are independent.
The result then follows by \eqref{lx1}.
\end{proof}

This result may seem surprising, since we have seen that most vertices $k$ in
$\DD_n$ and $\DD_{n+1}$ have $k$ of the order $\sqrt n$, and that in this
range, the density of vertices is high, which means that $\DD_n$ and
$\DD_{n+1}$ necessarily have a large number of common vertices.
Since the $\DD_n$ and $\DD_{n+1}$ have the same descendants of any common vertex,
it follows that the graphs $\DD_n$ and $\DD_{n+1}$ are strongly dependent.
Nevertheless, the proof above shows that $\DD_n$ and $\DD_{n+1}$ are
essentially independent in the first phase, which determines $\Xi\nn$ and
$\Xi\nni$.
Almost all vertices that contribute to $X\nn$ and $X\nni$ are
in the later dense phase, where there is  strong dependence,
but this does not prevent the asymptotic independence of $X\nn$ and $X\nni$
because in this phase, there are so many vertices and edges that the
evolution is governed by a law of large numbers and is essentially
deterministic; 
hence the strong dependence here does not matter.

\begin{remark}\label{Rnni}
  We considered above $X\nn$ and $X\nni$ only to be concrete.
The result extends to $X^{(n'_\nu)}$ and $X^{(n''_\nu)}$ for any two
sequences $n'_\nu$ and $n''_\nu$ that tend to infinity, with $n'_\nu< n''_\nu$.
(This follows by the same proof, where we treat the cases 
$(n''_\nu)_1 \le n'_\nu$ and $(n''_\nu)_1 > n'_\nu$ separately in the first part.)

Furthermore, the theorem extends to any finite number of such sequences.
\end{remark}

\begin{remark}\label{Rnoas}
  \refT{Tnni} shows that the sequence $X\nn/\sqrt n$ does \emph{not}
  converge \as; in contrast, the theorem (with \refR{Rnni})
implies that this sequence \as{}   oscillates wildly.
\end{remark}

It seems interesting to understand the relations between $\DD\nn$ and
$\DD\nni$ further. For example,
consider the number of common vertices 
\begin{align}
\gU\nn:=\bigabs{\DD\nn\cap\DD\nni}.
\end{align}
We have
$\Upsilon\nn\le\min\bigpar{X\nn,X\nni}$, and thus \refT{Tnni} implies (since
uniform integrability of $\min\bigpar{X\nn,X\nni}/\sqrt n$ follows from 
\refL{LXp}, or indeed from \refT{TX})
\begin{align}
\limsup_\ntoo
  \E\bigpar{ \Upsilon\nn/\sqrt n} \le \E\bigsqpar{\min\bigpar{\zeta,\zeta'}}
=\intoo\P(\zeta>x)^2\dd x
=\frac{27\pi^{3/2}}{64\sqrt2}.
\end{align}
(We omit the final, straightforward calculation.)
On the other hand, it follows easily from the results in \refSS{SSdensity} that
\begin{align}
\liminf_\ntoo
  \E\bigpar{ \Upsilon\nn/\sqrt n} >0.
\end{align}

\begin{problem}
 What is the asymptotics of the number of common vertices 
$\gU\nn:=\bigabs{\DD\nn\cap\DD\nni}$?
(I.e., the vertices that are descendants of both $n$ and $n+1$.)
\\
We conjecture that $  \E\bigpar{\Upsilon\nn/\sqrt n} \to \upsilon$ for some
constant $\gu>0$. Show this! What is $\gu$?
What is the asymptotic distribution of $\Upsilon\nn/\sqrt n$? (Assuming that
it exists.)
\end{problem}

\section{Some variations}\label{Svar}
We consider here the two variations mentioned in the introduction, and show
that the same results hold for them too.

\subsection{Several roots}\label{SSroots}
We may start with any given number $m\ge1$ roots, and then add $n-m$
vertices with outdegree $d$ recursively as above.
(We assume $1\le m\le n$.)
Denote the resulting random $d$-dag by $D_{n,m}$, and let $\DD_{n,m}$ be the
subgraph consisting of all vertices and edges that can be reached from $n$. 

Note that $D_{n,m}$ can be obtained from $D_n$ by simply removing all edges
between the roots, i.e., all edges within $[1,m]$.
Consequently, 
$D_{n,m}$ and $D_n$ have the same descendants in the interval $(m,n]$, 
and it follows that
\begin{align}\label{ask1}
  |\DD_n| -m < |\DD_{n,m}|\le |\DD_n|.
\end{align}

\begin{theorem}
  If the process starts with $m=o\bigpar{n\xdd}$ roots, 
and we thus define $X\nn:=|\DD_{n,m}|$,
then the results in
  \refTs{TX} and \ref{TD} still hold.
\end{theorem}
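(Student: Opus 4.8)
The plan is to reduce everything to the sandwich bound \eqref{ask1}, which already records that $|\DD_{n,m}|$ and $|\DD_n|$ differ by less than $m$, and then to quote \refTs{TX} and \ref{TD} together with the moment estimates already established for the one-root model.

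First, since $m=o\bigpar{n\xdd}$ we have $m/n\xdd\to0$, so dividing \eqref{ask1} by $n\xdd$ yields
\begin{align*}
  \frac{|\DD_n|}{n\xdd}-o(1) < \frac{|\DD_{n,m}|}{n\xdd}\le \frac{|\DD_n|}{n\xdd},
\end{align*}
where the $o(1)$ is deterministic. Hence $|\DD_{n,m}|/n\xdd$ and $|\DD_n|/n\xdd$ have the same limit in distribution, so with $X\nn:=|\DD_{n,m}|$ the convergence \eqref{td1} (and, for $d=2$, \eqref{tx1}) follows immediately from \refTs{TX} and \ref{TD} applied to $\DD_n$.

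Second, for convergence of all moments I would use the bounds already proved for the one-root model: \refL{LXp} (for $d=2$) and \eqref{pif} (for general $d\ge2$) give $\E|\DD_n|^p\le c_p n^{p(d-1)/d}$ for every $p>0$. By \eqref{ask1} we have $0\le |\DD_{n,m}|\le |\DD_n|$, so the same bound holds for $|\DD_{n,m}|$; consequently $\bigpar{|\DD_{n,m}|/n\xdd}^p$ is uniformly integrable for every $p>0$, and the convergence in distribution just obtained upgrades to convergence of all moments. The asymptotic expansions \eqref{tx2}--\eqref{tx3} and \eqref{tdmom} then follow from \eqref{Gmom} (respectively \eqref{chi2}) exactly as in the proofs of \refTs{TX} and \ref{TD}, since the limit variable is unchanged.

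I do not expect a genuine obstacle here: the entire argument is a monotonicity-plus-uniform-integrability wrapper around the two main theorems. The one place that deserves a sentence of justification is \eqref{ask1} itself, which holds because $D_{n,m}$ is obtained from $D_n$ simply by deleting the edges lying inside the root set $[1,m]$; this deletion leaves unchanged the set of descendants of $n$ in $(m,n]$ and can only destroy reachability of some of the $m$ root vertices, whence the two-sided bound. If one preferred convergence in $L^r$ rather than merely of $r$-th moments, the same uniform-integrability input suffices.
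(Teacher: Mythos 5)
Your proof is correct and follows exactly the paper's route: the paper's own proof of this theorem is the one-line "an immediate consequence of \eqref{ask1}", relying on the same coupling (deleting the edges inside $[1,m]$) and the same sandwich $|\DD_n|-m<|\DD_{n,m}|\le|\DD_n|$ with $m=o\bigpar{n\xdd}$. Your explicit uniform-integrability step for moment convergence, via \refL{LXp} and \eqref{pif}, just spells out what the paper leaves implicit.
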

\begin{proof}
  An immediate consequence of \eqref{ask1}.
\end{proof}

We may also obtain results for larger $m$. For simplicity we consider only
the case $d=2$.
Define, for $\mu>0$ and $x>0$,
\begin{align}\label{ask}
  \psi_\mu(x)&:
=\intoo\frac{x}{x+\max\set{t,\mu}^2}\dd t
=\int_\mu^\infty\frac{x}{x+t^2}\dd t+\frac{\mu x}{x+\mu^2}
\notag\\*&\phantom:
=
\sqrt{x}\arctan\frac{\sqrt x}{\mu}
+\frac{\mu x}{x+\mu^2}
.\end{align}

\begin{theorem}\label{Tm2}
Let $d=2$.
 Suppose that $m=m_n\to\infty$ such that $m/\sqrt n\to \mu\in(0,\infty)$.
Then
\begin{align}\label{tm2a}
  \frac{\abs{\DD_{n,m}}}{\sqrt n} \dto \psi_\mu\bigpar{\xi},
\end{align}
with convergence of all moments, where $\xi\in\gG(2)$.
Moreover,
\begin{align}\label{tm2b}
  \frac{\abs{\DD_{n,m}}}{\sqrt n} - \psi_\mu\bigpar{\Xi\nn}
\pto0.
\end{align}
\end{theorem}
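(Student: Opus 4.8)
The plan is to mimic the analysis of the single-root case ($m$ fixed), but now the extra roots force us to stop the evolution of the red dag the moment it reaches the root-block $[1,m]$, which sits at position $\sim\mu\sqrt n$ on the $\sqrt n$-scale. First I would observe that the construction in \refS{Sbasic} goes through verbatim for $D_{n,m}$ down to level $k=m$: the variables $Y_k$, $Z_k$, $J_k$, $W_k$, $M_k$, $A_k$ are defined for $m\le k\le n-1$ exactly as before, the recursion \eqref{tw1}--\eqref{zk} is unchanged, and $W_k=(k+1)Y_k$ is still a reverse supermartingale on this range. Since $m=o(n)$ we have $n_1=\floor{n/\log n}>m$ for large $n$, so Phases I and II (\refLs{LXi} and \refT{TWXi}) are untouched: $\Xi\nn=W_{n_1}/n\dto\xi\in\gG(2)$ and $W_k/n\to\Xi\nn$ uniformly on $[n_2,n_1]$. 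The only change is in Phase III and in the descendant-count step of \refS{Spf1}, where the sum over $k$ now runs only down to $k=m\sim\mu\sqrt n$ rather than to $1$.

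Concretely, I would keep \refT{TIII} as is — it describes $W_{t\sqrt n}\nn/n\to t^2\log(1+\Xi/t^2)$ uniformly on compact $t$-intervals, and this holds whether or not we later truncate at $t=\mu$. Then, repeating the argument of \refL{LX} with $B_k$ replaced by $B_m^{(n)}$ (the number of red vertices in $(m,n-1]$, i.e. $X\nn-1$ minus the contribution of the root block, but since no red vertex lies strictly inside $[1,m]$ except possibly the roots themselves reached as endpoints, one uses \eqref{ask1} to see $\abs{\DD_{n,m}}=B_m\nn + L_m\nn + O(m\cdot\indic{\text{some root is red}})$, and the error is $O_p(1)=o(\sqrt n)$). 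The key computation is the limit of $\hB\nn_\mu = n\qqw B_{\mu\sqrt n}\nn$: using $\bigl|\ddt\hB\nn_t\bigr|\le1$ and \eqref{r5}, the truncation at $t=\mu$ removes exactly the mass $\int_0^\mu \frac{\xi}{\xi+t^2}\dd t$ but — and this is the point — near $k=m$ each of the $Y_m$ red edges must attach somewhere in $[1,m]$, contributing an extra term. A cleaner way: condition on $\cF_m$; the remaining red vertices are just the $\le m$ roots hit by the $Y_m$ surviving edges, so $\abs{\DD_{n,m}} = 1+\sum_{k=m+1}^{n-1}J_k + O_p(1)$, and $\sum_{k=m+1}^{n-1}J_k/\sqrt n \to \int_\mu^\infty \frac{\xi}{\xi+t^2}\dd t = \sqrt\xi\,\arctan(\sqrt\xi/\mu)$ by the same argument as \eqref{vs4}--\eqref{vs7}, together with a separate estimate showing the roots hit contribute $\sim \mu\xi/(\xi+\mu^2)\cdot\sqrt n$ in expectation and concentrate. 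Adding these two pieces gives exactly $\psi_\mu(\xi)$ as defined in \eqref{ask}.

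For the root-block contribution I would argue as follows: at level $m$ there are $Y_m$ surviving red edges, with $W_m/n = (m+1)Y_m/n \to \psi$-independent limit; in fact $Y_m/\sqrt n = W_m/((m+1)\sqrt n)\to \xi/\mu$ by \refT{TIII} (plugging $t=\mu$: $W_{\mu\sqrt n}/n\to \mu^2\log(1+\xi/\mu^2)$, so $Y_{\mu\sqrt n}/\sqrt n \to \mu\log(1+\xi/\mu^2)$ — hmm, this gives $\mu\log(1+\xi/\mu^2)$, not $\xi/\mu$). The number of distinct roots in $[1,m]$ hit by $Y_m$ uniform i.i.d.\ draws from $[1,m]$ is, by a balls-in-bins computation, $m\bigl(1-(1-1/m)^{Y_m}\bigr) = m\bigl(1-e^{-Y_m/m}\bigr)+o(\sqrt n)$, and since $Y_m/m = (Y_m/\sqrt n)\cdot(\sqrt n/m)\to \log(1+\xi/\mu^2)$, this is $\sim m\bigl(1-\frac{1}{1+\xi/\mu^2}\bigr) = m\cdot\frac{\xi/\mu^2}{1+\xi/\mu^2}\sim \sqrt n\cdot\frac{\mu\xi}{\mu^2+\xi}$. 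Good — that is precisely the second term of \eqref{ask}. So the total is $\sqrt\xi\,\arctan(\sqrt\xi/\mu) + \frac{\mu\xi}{\xi+\mu^2} = \psi_\mu(\xi)$, giving \eqref{tm2b}; replacing $\xi$ by $\Xi\nn$ (legitimate since $\psi_\mu$ is continuous and bounded near the relevant range, and using \eqref{kb2}) and \eqref{tm2a} follows, with moment convergence inherited from \refL{LXp} via \eqref{ask1} (which bounds $\abs{\DD_{n,m}}\le\abs{\DD_n}=X\nn$).

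The main obstacle I expect is handling the root-block endpoint-collision bookkeeping rigorously: one must show that the number of red vertices landing in $[1,m]$ is concentrated around $m(1-e^{-Y_m/m})$ with error $o(\sqrt n)$ in probability (a second-moment or Azuma estimate on the occupancy, given $\cF_m$), and that no red vertex survives strictly below $m$ except these hit roots — the latter being immediate from the definition of $D_{n,m}$ since edges within $[1,m]$ are deleted. A secondary subtlety is uniform integrability for the moment-convergence claim, but \eqref{ask1} and \refL{LXp} dispatch that cleanly. Everything else is a routine re-run of \refSs{SIII}--\ref{Spf1} with the lower summation limit moved from $1$ to $m$.
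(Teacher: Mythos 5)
Your proposal is correct and follows essentially the same route as the paper: the same decomposition of $\abs{\DD_{n,m}}$ into the part above $m$ (handled via the density result, i.e.\ the argument of \refL{LXX}) plus the occupancy count of roots hit by the $Y_m$ surviving edges, with the same identification of the two terms of $\psi_\mu$ and the same uniform-integrability argument via \eqref{ask1} and \refL{LXp}. The one step you flag as an obstacle — concentration of the root occupancy count about $m\bigpar{1-(1-1/m)^{Y_m}}$ — is dispatched in the paper exactly as you suggest, by a conditional second-moment bound: the indicators $J_k$, $k\le m$, are negatively correlated given $\cF_m$, so $\Var\bigpar{R\nm\mid\cF_m}\le Y_m=O_p(\sqrt n)=o(n)$.
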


\begin{proof}
  The number of non-roots in $\DD_{n,m}$ is, using the notation \eqref{fa1},
\begin{align}\label{ask2}
\bigabs{\DD_{n,m}\cap(m,n]}    
=
\bigabs{\DD_{n}\cap(m,n]} 
=X\nn_{m/\sqrt n,\infty},
\end{align}
and thus it follows from \refL{LXX} that
\begin{align}\label{ask3}
\frac{\abs{\DD_{n,m}\cap(m,n]}}{\sqrt n} - 
\int_\mu^\infty \frac{\Xi\nn}{\Xi\nn+t^2}\dd t
\lito0.
\end{align}

Let $R\nm:=\bigabs{\DD_{n}\cap[1,m]}$ be the number of roots that are
descendants of $n$.
When the procedure in \refS{Sbasic} reaches $m$, there are $Y_m$ edges
left. Each of these selects an endpoint in \set{1,\dots,m} at random, 
uniformly and independently, and $R\nm$ is the number of vertices in
\set{1,\dots,m} that are selected at least once. 
(This is a classical occupancy problem, often described as throwing $Y_m$
balls into $m$ cells.)

Conditioned on $\cF_m$, 
each vertex $k\le m$ thus has the same probability
$\E J_k=1-(1-\frac{1}{m})^{Y_m}$ of becoming red.
The covariances can easily be calculated, but we note instead that if we
also condition on $J_k=0$, this increases the probability that $J_\ell=1$
for every $\ell\neq k$; thus $\Cov\bigpar{J_k,J_\ell\mid\cF_m}\le0$,
and
\begin{align}\label{ask4}
  \Var \bigpar{R\nm\mid\cF_m}&
 = \sum_{k,\ell=1}^m\Cov\bigpar{J_k,J_\ell\mid\cF_m}
\le \sum_{k=1}^m \Var \bigpar{J_k\mid\cF_m}
= m \Var \bigpar{J_m\mid\cF_m}
\notag\\&
\le m \E \bigpar{J_m\mid\cF_m} \le m \E\bigpar{ Z_m\mid\cF_m} = m\frac{Y_m}{m}=Y_m.
\end{align}
Hence, recalling \eqref{tw3} and \eqref{tm4},
\begin{align}\label{ask5}
  \E \lrpar{\frac{R\nm-\E(R\nm\mid\cF_m)}{\sqrt n}}^2
=\frac{\E \sqpar{  \Var \bigpar{R\nm\mid\cF_m}}}{n}
\le \frac{\E Y_m}{n}
\le \frac{\E W_m}{mn}
\le\frac{2}{m}\to0.
\end{align}
Consequently,
\begin{align}\label{ask6}
  \frac{R\nm-\E(R\nm\mid\cF_m)}{\sqrt n}\pto0.
\end{align}
Furthermore, by symmetry and \eqref{sel3},
\begin{align}\label{ask7}
  \E\bigpar{R\nm\mid\cF_m}
= m\E\bigpar{J_m\mid\cF_m}
= m \frac{\Xi\nn}{\Xi\nn+m^2/n}+\op(m),
\end{align}
where $\op(m)$ is a (random) quantity such that $\op(m)/m\pto0$.
It follows from \eqref{ask6} and \eqref{ask7} that
\begin{align}\label{ask8}
  \frac{R\nm}{\sqrt n} - \mu\frac{\Xi\nn}{\Xi\nn+\mu^2}\pto0.
\end{align}
 
We obtain \eqref{tm2b} by summing \eqref{ask3} and \eqref{ask8},
and this implies \eqref{tm2a} by \eqref{lxi}.
Finally, moment convergence follows, since every power is uniformly
integrable by $|\DD\nm|\le X\nn$ and \refL{LXp}.
\end{proof}

It is possible to obtain results also for the case $m/\sqrt n\to\infty$ by
our methods, but we leave this case to the reader.

\subsection{Drawing without replacement}\label{SSwithout}
Consider now the case when the endpoints of the $d$ edges from a vertex $k$
are selected by drawing without replacement; 
in other words, the endpoints form a uniformly random subset 
of \set{1,\dots,k-1}
with $d$ elements. (We start with $m\ge d$ roots.)
Thus there are no multiple edges and $D_n$ is a simple multigraph.

The analysis in \refS{Sbasic} is based on the independence of the endpoints of
different edges; this is no longer true since edges from the same vertex now
are dependent. However, a minor variation of the arguments allows us to
reach  the same conclusions.
For simplicity, we consider again the case  $d=2$, and leave the
straightforward generalization to higher $d$ to the reader.
We use the same notations as above, with the additions below.

Say that the two edges starting together from a red vertex are \emph{twins}.
We thus now do not allow two twins to have the same endpoint.

Consider the $Y_k$  red edges that cross the gap between $k+1$ and $k$. 
Some of these come in pairs of twins, while others are single
(because their twin has already found an endpoint).
Let $Y_{k,1}$ be the number of single edges, and $Y_{k,2}$ the number of
pairs of twins among these edges. Thus
\begin{align}\label{y12}
  Y_k=Y_{k,1}+2Y_{k,2}.
\end{align}
Similarly, let $Z_{k,1}$ be the number of single edges that end at $k$, and
let $Z_{k,2}$ be the number of edges that end in $k$ and
still having a living twin (that will later find an endpoint $\ell<k$).
Thus 
\begin{align}\label{z12}
  Z_k=Z_{k,1}+Z_{k,2}.
\end{align}
We still have  \eqref{tw1}, but also the more detailed recursion
\begin{align}\label{w1}
  Y_{k-1,1}&=Y_{k,1}-Z_{k,1}+Z_{k,2},
\\\label{w2}
  Y_{k-1,2}&=Y_{k,2}-Z_{k,2}+J_k
=Y_{k,2}-Z_{k,2}+\indic{Z_{k,1}+Z_{k,2}\ge1}.
\end{align}
Each of the $Y_{k,1}$  single edges ends at $k$ with probability $1/k$,
and each of the $Y_{k,2}$ pairs of twins has one edge ending at $k$ (and
thus leaving one single edge) with probability $2/k$.
Hence, conditioned on $\cF_k$, we now have
\begin{align}\label{w3}
  Z_{k,1}&\in\Bin(Y_{k,1},1/k),
\\\label{w4}
  Z_{k,2}&\in\Bin(Y_{k,2},2/k),
\end{align}
with $Z_{k,1}$ and $Z_{k,2}$ (conditionally) independent.

Taking conditional expectations yields,  instead of \eqref{tw2},
\begin{align}\label{w5}
\E \bigpar{ Y_{k-1,1}\mid\cF_k}&= Y_{k,1}-\frac1k Y_{k,1}+\frac2k Y_{k,2},
\\\label{w6}
 \E \bigpar{Y_{k-1,2}\mid\cF_k}&=Y_{k,2}-\frac2k Y_{k,2}+
\P\bigpar{Z_k\ge1\mid\cF_k}
.\end{align}
 Thus, using \eqref{y12},
\begin{align}\label{w7}
\E \bigpar{ Y_{k-1}\mid\cF_k}&
= Y_{k}-\frac1k Y_{k}
+2\P\bigpar{Z_k\ge1\mid\cF_k}
\notag\\
&= Y_{k}-\frac1k Y_{k}
+2\Bigpar{1-\Bigpar{1-\frac{1}{k}}^{Y_{k,1}}\Bigpar{1-\frac{2}{k}}^{Y_{k,2}}}
\end{align}
and also, exactly as in \eqref{tw2+},
\begin{align}
\E \bigpar{ Y_{k-1}\mid\cF_k}&= Y_{k}-\frac1k Y_{k}
+2\P\bigpar{Z_k\ge1}
\le \frac{k-1}{k}Y_k + 2 \E Z_k
= \frac{k+1}k Y_{k}.
\end{align}
Thus $W_k$ is still a reverse supermartingale, $M_k$ is a revese
martingale, and $A_k$ is reverse increasing; \eqref{tw3}--\eqref{tm4} hold
without any changes. 
The exact formulas in \eqref{tw4a} and \eqref{tw4b} are replaced by
\begin{align}\label{tw4a*}
    \E \bigpar{W_{k-1}\mid\cF_k}&
=(k-1)Y_k
+2k\Bigpar{1-\Bigpar{1-\frac{1}{k}}^{Y_{k,1}}\Bigpar{1-\frac{2}{k}}^{Y_{k,2}}}
\end{align}
and thus
\begin{align}\label{tw4b*}
A_{k-1}-A_k&=
W_k- \E \bigpar{W_{k-1}\mid\cF_k}
=2k\Bigpar{\Bigpar{1-\frac{1}{k}}^{Y_{k,1}}\Bigpar{1-\frac{2}{k}}^{Y_{k,2}}-1
+\frac{Y_k}{k}}
.\end{align}
The rest of \refS{Sbasic} holds with minor changes: 
the numerical constants in inequalities may change (perhaps including cases
where we had constant 1), 
we estimate (conditional) variances of $Z_{k,1}$ and $Z_{k,2}$ separately in 
\eqref{tw6},
the exact formula in
\eqref{la1} is modified as above, and the equality in \eqref{l24} is
modified; we omit the details.

In \refS{SI}, we note that for the version
studied in the previous sections, the probability that two twins starting at
$k$ have the same endpoint is $1/(k-1)$. Hence, the expected number of such
collisions among twins starting at $k\ge n_1$ is (with $J_n:=1$), 
using \eqref{l21},
\begin{align}
  \sum_{k=n_1}^n\P(J_k=1)\frac{1}{k-1}
\le \frac{1}{n-1}+   \sum_{k=n_1}^{n-1}\frac{2n}{k^2(k-1)}
\le \frac{1}{n-1}+ \frac{3n}{n_1^2}=o(1).
\end{align}
Thus, \whp{} there are no such collisions, which means that we may couple
the versions using drawing with and without replacement such that they
\whp{} coincide on the interval $[n_1,n]$. Consequently,
\refT{TYule} giving a coupling with the Yule process holds also for  drawing
without replacement.

The results in \refSs{SI}--\ref{Smom} now hold as before, with some numerical
constants changed and a few minor changes. 
The most important is that \eqref{kb6a} now, by \eqref{tw4b*}, becomes 
\begin{align}\label{kb6a*}
  \ddt \hA\nn_t
= -n\qqw\cdot 
2k\Bigpar{\Bigpar{1-\frac{1}{k}}^{Y_{k,1}}\Bigpar{1-\frac{2}{k}}^{Y_{k,2}}-1
+\frac{Y_k}{k}}
,\end{align}
but it is easily seen that this together with \eqref{kb5} 
still yields \eqref{kb6},
since
\begin{align}\label{london}
  \log\biggpar{\Bigpar{1-\frac{1}{k}}^{Y_{k,1}}\Bigpar{1-\frac{2}{k}}^{Y_{k,2}}}
&= -Y_{k,1}\cdot\frac{1}{k} - Y_{k,2}\cdot\frac{2}{k} +
  O\Bigpar{Y_k\cdot\frac{1}{k^2}}
\notag\\&
=-\frac{Y_k}{k}+o(1)
.\end{align}
There is a similar modification in \eqref{vs2}, but again the conclusion
\eqref{vs3} holds by \eqref{london}.
In \refS{Smom}, we argue as in \eqref{sw4} for $Z_{k,1}$ and $Z_{k,2}$ separately.

Hence, \refT{TX}  holds also for drawing without replacement.
(And so does \refT{TD}, by similar arguments.)

\section*{Acknowledgement}
I thank Donald Knuth and Philippe Jacquet for drawing my attention to this
problem.

\newcommand\AAP{\emph{Adv. Appl. Probab.} }
\newcommand\JAP{\emph{J. Appl. Probab.} }
\newcommand\JAMS{\emph{J. \AMS} }
\newcommand\MAMS{\emph{Memoirs \AMS} }
\newcommand\PAMS{\emph{Proc. \AMS} }
\newcommand\TAMS{\emph{Trans. \AMS} }
\newcommand\AnnMS{\emph{Ann. Math. Statist.} }
\newcommand\AnnPr{\emph{Ann. Probab.} }
\newcommand\CPC{\emph{Combin. Probab. Comput.} }
\newcommand\JMAA{\emph{J. Math. Anal. Appl.} }
\newcommand\RSA{\emph{Random Structures Algorithms} }
\newcommand\DMTCS{\jour{Discr. Math. Theor. Comput. Sci.} }

\newcommand\AMS{Amer. Math. Soc.}
\newcommand\Springer{Springer-Verlag}
\newcommand\Wiley{Wiley}

\newcommand\vol{\textbf}
\newcommand\jour{\emph}
\newcommand\book{\emph}
\newcommand\inbook{\emph}
\def\no#1#2,{\unskip#2, no. #1,} 
\newcommand\toappear{\unskip, to appear}

\newcommand\arxiv[1]{\texttt{arXiv}:#1}
\newcommand\arXiv{\arxiv}

\newcommand\xand{and }
\renewcommand\xand{\& }

\def\nobibitem#1\par{}

\end{document}